\newcommand \bel {\begin{equation}\label}
\newcommand \ee {\end{equation}}
\newcommand \be {\begin{equation}}
\newcommand \RR {\mathbb R}
\newcommand \HH {\mathbb H}
\newcommand \CC {\mathbb C}
\newcommand \LL {\mathbb L}
\newcommand \NN {\mathbb N}
\newcommand \TT {\mathbb T}
\newcommand \del \partial
\newcommand \Bcal {\mathcal B}
\newcommand \Ccal {\mathcal C}
\newcommand \Ocal {\mathcal O}
\newcommand \Lcal {\mathcal L}
\newcommand \Jcal {\mathcal J}
\newcommand \bei {\begin{itemize}}
\newcommand \eei {\end{itemize}}
\def \eps {\varepsilon}
\newtheorem{theorem}{\color{black}\indent Theorem}[section]
\newtheorem{lemma}{\color{black}\indent Lemma}[section]
\newtheorem{proposition}{\color{black}\indent Proposition}[section]
\newtheorem{definition}{\color{black}\indent Definition}[section]
\newtheorem{remark}{\color{black}\indent Remark}[section]
\begin{document}
\large
\title{\bf Nonlinear stability of explicit self-similar solutions for the timelike extremal hypersurfaces in $\mathbb{R}^{1+3}$}
\author{
{\sc Weiping Yan}\thanks{School of Mathematics, Xiamen University, Xiamen 361000, P.R. China. Email: yanwp@xmu.edu.cn.}
%\thanks{Laboratoire Jacques-Louis Lions, Sorbonne Universit\'{e}, 4, Place Jussieu, 75252 Paris, France.}
%\newline  
% {\sl Key Words.}  
%%
%{\sl Mathematics Subject Classification}.  
}
\date{November 20, 2018}

\maketitle
\begin{abstract}  
This paper is devoted to the study of the singularity phenomenon of timelike extremal hypersurfaces in Minkowski spacetime $\mathbb{R}^{1+3}$. We find that there are two explicit lightlike self-similar solutions to a graph representation of timelike extremal hypersurfaces in Minkowski spacetime $\mathbb{R}^{1+3}$, the geometry of them are two spheres.
The linear mode unstable of those lightlike self-similar solutions for the radially symmetric membranes equation is given. 
After that, we show those self-similar solutions of the radially symmetric membranes equation
are nonlinearly stable inside a strictly proper subset of the backward lightcone. This means that the dynamical behavior of those two spheres is as attractors.
Meanwhile, we overcome the double roots case (the theorem of Poincar\'{e} can't be used) in solving the difference equation
by construction of a Newton's polygon when we carry out the analysis of spectrum for the linear operator.

\end{abstract}

\tableofcontents

%=========================================================================

\section{Introduction and main results} 
\subsection{Introduction}
\setcounter{equation}{0}
The timelike minimal surface equation arises in string theory and geometric minimal surfaces theory in Minkowski space. There has been discovered that the behavior of string theory in spacetimes that develop singularities \cite{W}. Meanwhile,
the study of singularity is one of most important topics in physics and mathematics theory, which corresponds to a physical event. It can also imply that some essential physics is missing from the equation in question, which should thus be supplemented with additional terms. 
Hence it is a nature problem to study the singularity formation of timelike minimal surface equation.

When the spacial dimension is one, the timelike minimal surface equation is so-called Born-Infeld equation (or relavisitive string equation).
Eggers-Hoppe \cite{Hop1,Hop2} first gave some interesting description of self-similar singularity to timelike extremal hypersurfaces, meanwhile, the swallowtail singularity was also been given by the study of the string solution in \cite{Egg}. After that, Nguyen-Tian \cite{tian} proved the existence of blowup solution when the string moving in Einstein vaccum spacetime. One can see the well-posedness theory and related results in \cite{Ba,Kong,Lin,Mi,Yan1} for this kind of equations. 

Let $\mathcal{M}$ be a timelike $(M+1)$-dimensional hypersurface, and $(\mathbb{R}^{D},g)$ be a $D$-dimensional Minkowski space, and $g$ be the Minkowski metric with $g(\partial_t,\partial_t)=1$. At any time $t$, the spacetime volume in $\mathbb{R}^{D}$ of timelike hypersurface $\mathcal{M}$ can be described as a graph over $\mathbb{R}^M$, which satisfies
\begin{equation}\label{E1-0}
\mathcal{S}(u)=\int_{\mathbb{R}}\int_{\mathbb{R}^{M}}\sqrt{1-|\partial_t u|^2+|\nabla u|^2}d^Mxdt.
\end{equation}
Critical points of action integral (\ref{E1-0}) give rise to submanifolds $\mathcal{M}\subset\mathbb{R}^{D}$ with
vanishing mean curvature, i.e. timelike extremal hypersurfaces. The Euler-Lagrange equation of (\ref{E1-0}) is
\begin{equation}\label{ENNN1-1}
\partial_t\left(\frac{\partial_t u}{\sqrt{1-|\partial_t u|^2+|\nabla_xu|^2}}\right)-\sum_{i=1}^M\partial_{x_i}\left(\frac{\partial_{x_i}u}{\sqrt{1-|\partial_t u|^2+|\nabla_xu|^2}}\right)=0.
\end{equation}
Thus finding the solution of (\ref{ENNN1-1}) is equivalent to solve the equation \cite{Hop1,Hop2} 
\bel{E1-1}
(1-u_{\alpha}u^{\alpha})\square_g u+u^{\beta}u^{\alpha}u_{\alpha\beta}=0,
\ee
where $\forall\alpha,\beta=0,1,2,\ldots,M$, $u_{\alpha}=\frac{\partial u}{\partial x^{\alpha}}$, $u_{\alpha\beta}=\frac{\partial^2 u}{\partial x^{\alpha}\partial x^{\beta}}$ and $\square_g u=u_{\alpha\beta}g^{\alpha\beta}$.

Let $M=1+2$, $D=4$ and $r=|x|$, equation (\ref{E1-1}) is reduced into the radially symmetric membranes equation
\bel{E1-2}
u_{tt}-u_{rr}-\frac{u_r}{r}+u_{tt}u_r^2+u_{rr}u_t^2-2u_tu_ru_{tr}+\frac{1}{r}u_ru_t^2-\frac{1}{r}u_r^3=0.
\ee

We supplement equation (\ref{E1-2}) with an initial data
\begin{equation}\label{E1-2R1}
u(0,r)=u_0(r),~~u_{t}(0,r)=u_1(r).
\end{equation}

If $u(t,r)$ is a solution of (\ref{E1-2}), there exists an exact scaling invariance
\begin{equation}\label{E1-3}
u(t,r)\mapsto u_{\lambda}(t,r)=\lambda u(\lambda^{-1}t,\lambda^{-1}r),\quad for \quad any \quad constant \quad \lambda>0,
\end{equation}
%Under this scaling, the conserved energy
%\begin{equation*}
%E(u)=\int_{0}^{\infty}(\frac{1}{2}u_t^2+\frac{1}{2}u_r^2+F(u_t,u_r))rdr
%\end{equation*}
%\begin{equation*}
%E(u_{\lambda})=\lambda E(u),
%\end{equation*}
%where $F'(u_t,u_r)=u_{tt}u_r^2+u_{rr}u_t^2-2u_tu_ru_{tr}+\frac{1}{r}u_ru_t^2-\frac{1}{r}u_r^3$. 
and it is a mass conservation dynamics, i.e.
$$
\int_{\RR}\left(\frac{\partial_t u}{\sqrt{1-|\partial_t u|^2+|\nabla_xu|^2}}\right)dx_i~is~conserved~along~the~dynamics.
$$

In general, quasilinear wave equations are energy supercritical, thus the smooth finite energy initial data leads to finite time blowup of solutions, and the blowup rate is like the self-similar blowup rate. Hence, we expect the radially symmetric membranes equation (\ref{E1-2}) admits self-similar blowup solutions. Eggers-Hoppes \cite{Hop1} gave a detail discussion on the existence of self-similar blowup solutions (not explicit self-similar solutions) to the radially symmetric membranes equation (\ref{E1-2}). Meanwhile, they gave some numerical analysis results on the formation of singularity for equation (\ref{E1-1}). 

\subsection{Main result}

In view of the radially symmetric membranes equation (\ref{E1-2}), it is natural to investigate whether explicit singular solutions do exist and whether they are stable.
In the present paper, we first show there are two explicit self-similar blowup solutions to (\ref{E1-2}), then we prove nonlinearly stable of them inside a strictly subset of the backward lightcone. Here two explicit self-similar solutions are lightlike solutions, i.e. which propagate with the speed of light. The existence of it is dimension independent \cite{Hop0}.

\begin{theorem}
\begin{itemize}
\item
The radially symmetric membranes equation (\ref{E1-2}) has two explicit lightlike self-similar solutions
$$
u_T^{\pm}(t,r)=\pm(T-t)\sqrt{1-(\frac{r}{T-t})^2}, \quad\quad (t,r)\in(0,T)\times[0,T-t],
$$
where the positive constant $T$ denotes the maximal existence time. 

Moreover, two explicit lightlike self-similar solutions admit \textbf{smooth initial data and finite energy} in $(0,T)\times(0,T-t]$.

\item
Those explicit lightlike self-similar solutions are nonlinearly stable inside a strictly proper subset of the backward lightcone $\Omega_{T-t}$, i.e.
there exist a positive constant $\sigma\in(0,1)$ and a small positive constant $\eps$ depending on $\sigma$, if the initial data (\ref{E1-2R1}) satisfies
$$
\|u_0(r)-u_{T^*}^{\pm}(0,r)\|_{\HH^1(\Omega_T)}+\|u_1(r)-\del_tu_{T^*}^{\pm}(0,r)\|_{\LL^2(\Omega_T)}\lesssim\eps,
$$
then there exists a positive constant $T$ depending on the initial data such that equation (\ref{E1-2}) admits a radial symmetric solution $u(t,r)$ of the form
$$
u(t,r)=u_T^{\pm}(t,r)+w(t,r),
$$
where the set $\Omega_{T-t}:=\{r: r\in(0,\sigma(T-t)]\}$, and
$$
\|w(t,r)\|_{\HH^1(\Omega_{T-t})}\lesssim \eps (T-t).
$$

Moreover, the blowup time $T$ belongs to $[T^*-\delta,T^*+\delta]$ for the positive constant $\delta\ll1$.

\end{itemize}
\end{theorem}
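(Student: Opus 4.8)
\emph{Part 1} is a direct verification. Writing the self-similar ansatz in the form $u(t,r)=(T-t)\psi(\rho)$ with $\rho=\frac{r}{T-t}$ and $\psi(\rho)=\pm\sqrt{1-\rho^2}$, one computes $u_t=\mp(1-\rho^2)^{-1/2}$, $u_r=\mp\rho(1-\rho^2)^{-1/2}$, together with the second derivatives $u_{tt},u_{rr},u_{tr}$, and substitutes into (\ref{E1-2}); collecting the resulting terms by powers of $(1-\rho^2)^{-1/2}$, everything cancels identically, so $u_T^{\pm}$ solves (\ref{E1-2}) on $(0,T)\times(0,T-t)$. The same computation records the null relation $1-u_t^2+u_r^2\equiv 0$ along $u_T^\pm$, which is the precise meaning of ``lightlike''. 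Smoothness on the open region $\{0<r<T-t\}$ is immediate; finiteness of the energy follows because the relevant density is integrable on $\{\rho\le 1\}$ despite the singularity at the tip $\rho=1$, and one records the value of the conserved flux for later use.

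For \emph{Part 2} I would run the by now standard self-similar stability scheme, adapted to the truncated cone $\Omega_{T-t}=\{0<r\le\sigma(T-t)\}$. Introduce similarity variables $\tau=\log\frac{T}{T-t}$, $\rho=\frac{r}{T-t}\in(0,\sigma]$ and rescale $u(t,r)=(T-t)\bigl(\psi_\pm(\rho)+\phi(\tau,\rho)\bigr)$, so that $w=(T-t)\phi$. Substituting into (\ref{E1-2}) and isolating the part linear in $\phi$ gives a first-order-in-$\tau$ evolution $\del_\tau\Psi=\Lcal\Psi+\mathbf{N}(\Psi)$ for $\Psi=(\phi,\del_\tau\phi)$ on the Hilbert space $\Hcal=\HH^1(\mathbb{B}_\sigma)\times\LL^2(\mathbb{B}_\sigma)$, with $\Lcal=\Lcal_0+\Lcal'$: $\Lcal_0$ is the principal part (the wave operator in similarity coordinates plus the $\rho\del_\rho$ transport coming from the rescaling) and $\Lcal'$ collects the potential terms, whose coefficients are smooth functions of $\psi_\pm$ and its derivatives --- these involve negative powers of $1-\rho^2$ and blow up at the lightcone $\rho=1$, which is exactly why one must restrict to $\sigma<1$. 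On $\mathbb{B}_\sigma$ the coefficients of $\Lcal$ are smooth and bounded; choosing an outgoing (no incoming) boundary condition at $\rho=\sigma$ one checks $\Lcal_0$ is dissipative up to a bounded shift, and a Lumer--Phillips argument yields a $C_0$-semigroup $S(\tau)$ with $\|S(\tau)\|\lesssim e^{c\tau}$. One also verifies that $\mathbf{N}$ is at least quadratic and locally Lipschitz on $\Hcal$, using the radial Sobolev embedding on $\mathbb{B}_\sigma$ to control the quasilinear lower-order terms of (\ref{E1-2}).

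The core of the proof is the spectral analysis of $\Lcal$. The scaling symmetry (\ref{E1-3}) together with the freedom in the blowup time produces an explicit eigenfunction of $\Lcal$ at $\lambda=1$, the gauge mode $g_1:=\del_T u_T^\pm$ (in similarity variables); this is the ``linearly unstable mode'' of the abstract, and the goal is to prove it is simple and that it is the \emph{only} spectral point with $\mathrm{Re}\,\lambda\ge -\omega$ for some $\omega>0$. Reducing the eigenvalue problem $\Lcal\Psi=\lambda\Psi$ to a single second-order ODE in $\rho$ and developing its solutions as power series at the regular singular points $\rho=0$ and $\rho=\sigma$ (respectively $\rho=1$), one obtains a linear recursion (difference equation) for the Taylor coefficients. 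For generic $\lambda$ a Poincar\'e-type asymptotic analysis of the recursion settles convergence and hence existence or nonexistence of an $\Hcal$-eigenfunction, but at special values of $\lambda$ the characteristic equation of the recursion has a \emph{double root}, so Poincar\'e's theorem does not apply. This is the place where a Newton-polygon construction is used to extract the correct asymptotic behavior of the two independent solutions of the recursion; carrying this out rules out any further eigenvalue in $\{\mathrm{Re}\,\lambda\ge -\omega\}$ and excludes accumulation of spectrum on the line $\mathrm{Re}\,\lambda=0$. A resolvent estimate (Gearhart--Pr\"uss, or a direct quasi-contraction bound) then upgrades this to semigroup decay: $\Hcal=\langle g_1\rangle\oplus\Hcal_s$ with $S(\tau)$ invariant on $\Hcal_s$ and $\|S(\tau)|_{\Hcal_s}\|\lesssim e^{-\omega\tau}$.

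Finally I would close the nonlinear problem. Writing the Duhamel formula for $\del_\tau\Psi=\Lcal\Psi+\mathbf{N}(\Psi)$ and projecting off the unstable direction, I would set up a fixed point in a ball of $\{\Psi\in C([0,\infty),\Hcal):\sup_\tau\|\Psi(\tau)\|_\Hcal\le\delta\}$; the single unstable mode is killed by a Lyapunov--Perron / codimension-one argument, namely for data $\eps$-close to $u_{T^*}^\pm$ there is a unique choice of blowup time $T=T(\eps)$, necessarily in $[T^*-\delta,T^*+\delta]$, for which the projection of $\Psi(\tau)$ onto $\langle g_1\rangle$ vanishes identically; the resulting fixed point lies in $\Hcal_s$ and obeys $\sup_\tau\|\Psi(\tau)\|_\Hcal\lesssim\eps$. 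Undoing the rescaling ($w=(T-t)\phi$, and tracking how the radial $\HH^1$ norm scales under $\rho=r/(T-t)$) converts this bound into $\|w(t,r)\|_{\HH^1(\Omega_{T-t})}\lesssim\eps(T-t)$, which is the assertion. The main obstacle is the spectral step: establishing absence of unstable eigenvalues beyond the symmetry mode in the presence of the double-root resonance --- precisely what the Newton-polygon argument is for --- compounded by the degeneracy of the linearized principal part along the lightlike background (the discriminant $1-u_t^2+u_r^2$ vanishes there), which makes the very construction of the semigroup on $\Omega_{T-t}$ delicate and is the reason the result is confined to a strictly proper subcone.
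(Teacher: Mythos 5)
Your Part~1 matches the paper: the ansatz $u=(T-t)\phi(\rho)$ reduces (\ref{E1-2}) to the ODE (\ref{E2-1}), and the factorization $\rho(1-\rho^2-\phi^2)\phi''+\cdots=0$ picks out $\phi=\pm\sqrt{1-\rho^2}$ with the lightlike relation $1-u_t^2+u_r^2\equiv 0$.

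For Part~2 the plan as written has a gap at the very first step. You propose the standard self-similar stability scheme: linearize around $u_T^\pm$ in similarity variables, build a $C_0$-semigroup on $\HH^1(\mathbb{B}_\sigma)\times\LL^2(\mathbb{B}_\sigma)$, analyze the spectrum of the resulting wave operator, and remove the single gauge eigenvalue by modulating $T$ (Lyapunov--Perron). But precisely because the profile is \emph{lightlike}, $1-\rho^2-\phi^2\equiv 0$ and $\phi\phi'+\rho\equiv 0$ on the \emph{entire} cone, so in (\ref{E2-6r}) the coefficients of $v_{\rho\rho}$, $v_{\tau\rho}$ and $v_{\rho}$ all vanish \emph{identically} (not just at $\rho=1$). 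The linearization around $u_T^\pm$ collapses to the pure ODE $v_{\tau\tau}+3v_\tau-4v=0$, as recorded in (\ref{E2-7r}). There is no hyperbolic operator in $\rho$, no $\rho$-spectral problem, and no $\HH^1\times\LL^2$ phase space on which the Donninger--Sch\"orkhuber scheme can act; restricting to $\rho\le\sigma<1$ does not restore the $\del_\rho$ terms. You flag this degeneracy in your closing sentence as ``delicate'', but the proposed scheme does not actually overcome it.

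The paper's cure is different and is the crux of the argument: instead of modulating the blowup time, it deforms the \emph{background}, taking the initial approximation $v^{(0)}=(\kappa-1)\phi$ with $\kappa\sim 1$, $\kappa<1$, and linearizing around $\kappa\phi$. Now $1-\rho^2-(\kappa\phi)^2=(1-\kappa^2)(1-\rho^2)\neq 0$, so the linearized operator $\Lcal^{(0)}$ in (\ref{YYE3-1}) regains a genuine $\rho$-principal part (of size $1-\kappa^2$), is dissipative, and --- crucially --- has \emph{all} spectrum in $\{\mathrm{Re}\,\nu<0\}$ (Lemma~3.6, Proposition~3.2); there is no unstable mode left to kill. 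The price is a small non-autonomous source $-2\kappa(1-\kappa^2)(1-\rho^2)^{-1/2}=O(\varepsilon_0)$ in the equation (\ref{YAE4-1}) for $w$, which the paper absorbs by a Nash--Moser iteration rather than a Duhamel fixed point. The $\kappa$-parameter thus replaces your modulation of $T$: both are one-parameter adjustments, but one tunes the background profile while the other tunes the blowup time, and only the former resolves the degeneracy. Finally, your description of the Newton-polygon step is slightly off: in the paper the double root is not encountered at exceptional eigenvalues $\lambda$ but is \emph{structural} --- the Frobenius indicial equation at $\rho=0$ is $(1-\kappa^2)n_0^2=0$ (double), and the Poincar\'e characteristic equation of the resulting $4$th-order recursion (\ref{E3-6}) is $(R-1)^2=0$ for every $\nu$ --- so the Newton-polygon analysis is needed across the whole spectral problem, not at isolated resonances.
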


\paragraph{Sketch the proof of Theorem 1.1.}

Since the radially symmetric membranes equation (\ref{E1-2}) is a quasilinear wave equation with singular coefficients, it is hard to find explicit singular solutions \cite{Hop1,Hop2}. Thanks to the structure of nonlinear terms in (\ref{E1-2}), we can set 
$$u(t,r)=(T-t)\phi({r\over T-t}),\quad for\quad t<T,$$
then the radially symmetric membranes equation (\ref{E1-2}) is reduced into an ODE as follows
$$
\rho(1-\rho^2-\phi^2)\phi''+\phi'-\phi'\phi^2+2\rho\phi(\phi')^2+(1-\rho^2)(\phi')^3=0.
$$

Furthermore, let
$$
1-\rho^2-\phi^2=0,
$$
then it holds
$$
\phi'-\phi'\phi^2+2\rho\phi(\phi')^2+(1-\rho^2)(\phi')^3=0.
$$
Thus the radially symmetric membranes equation (\ref{E1-2}) admits two explicit solutions
\bel{A1-1}
u_T^{\pm}(t,r)=\pm(T-t)\sqrt{1-(\frac{r}{T-t})^2},\quad\quad (t,r)\in(0,T)\times[0,T-t],
\ee
which are lightlike solutions and break down at $t=T$ in the sense that
\bel{A1-2}
\partial_{rr}u_T^{\pm}(t,r)|_{r=0}\rightarrow+\infty,~~as~~t\rightarrow T^{-}.
\ee

In what follows, we consider the dynamical behavior near two explicit self-similar solutions. If two explicit self-similar solutions are nonlinearly stable, then the dynamical behavior of them are as attractors. Otherwise, there may exist the bifurcation phenomenon. 
We linearize the radially symmetric membranes equation (\ref{E1-2}) around two explicit self-similar solutions $u_T^{\pm}(t,r)$ in the similarity coordinates, then we get the linear equation 
$$
v_{\tau\tau}+3v_{\tau}-4v=0,
$$
which admits two eigenvalues $4$ and $-1$. Obviously, the positive eigenvalue $4$ is an unstable eigenvalue, which may cause the unstable phenomenon (see Definition 2.1 for mode unstable or stable).  Luckily, we find if we give a small perturbation to solutions (\ref{A1-1}), then the new linear equation only admits eigenvalues $\nu$ satisfying $Re~\nu<0$.
More precisely, let the small perturbation be of the form
$$
v^{(0)}(t,r)=(1-\kappa)u_T^{\pm}(t,r),
$$
where positive parameters $\kappa\in(\kappa_{\eps,\sigma},1)$ and $T\in[T^*-\delta,T^*+\delta]$ with $0<\delta\ll1$ and
$$
\kappa_{\eps,\sigma}:=1-(T\sigma)^{-{1\over2}}\Big(1-({\sigma T\over T^*})^2\Big)^{{1\over2}}\Big[1+\sigma+T\Big(1-({\sigma T\over T^*})^2\Big)\Big]^{-1}\eps,
$$
then the linearized equation (around $u_T^{\pm}(t,r)+v^{(0)}(t,r)$) in the similarity coordinates is 
$$
\aligned
\Big(1+(\kappa^2-1)\rho^2\Big)v_{\tau\tau}&+\Big(4\kappa^2-1+(\kappa-1)^2\rho^2\Big)v_{\tau}-(1-\kappa^2)(1-\rho^2)^2v_{\rho\rho}\\
&+2\rho(1-\kappa^2)(1-\rho^2)v_{\tau\rho}-\rho^{-1}(1-\kappa^2)(1-\rho^2)v_{\rho}-4\kappa^2v=0,
\endaligned
$$
which is well-posedness, and the corresponding eigenvalues of it satisfies $Re~\nu<0$. Here we can not follow the quasi-solution method given in \cite{Cos2,Cos3} to deal with our case. This is because there are double roots in solving the difference equation, and the theorem of Poincar\'{e} can't be used.
Thus, we have to carry out the analysis of this case by construction of a Newton's polygon \cite{Brie}.
Meanwhile, we notice that there is a non-autonomous term $-2\kappa(1-\kappa^2)(1-\rho^2)^{1\over2}$ in the nonlinear equation.
Since $\kappa\sim1$ and $\rho\in(0,\sigma]$ with $0<\sigma<1$, the non-autonomous term admits the property
$$
2\kappa(1-\kappa^2)(1-\rho^2)^{-\frac{1}{2}}\sim\eps_0\ll1.
$$
Hence, we should solve a non-autonomous quasilinear damped wave equation inside a strictly proper subset of the backward lightcone.
By construction of a suitable Nash-Moser iteration scheme \cite{Yan,Yan1,YZ,ZY}, we construct a solution $u(t,r)\in\HH^1(\Omega_{T-t})$ of the radially symmetric membranes equation (\ref{E1-2}) satisfies
$$
u(t,r)-u_T^{\pm}(t,r)=(1-\kappa)u_T^{\pm}(t,r)+w^{\infty}(t,r),\quad (t,r)\in(0,T)\times(0,\sigma(T-t)],
$$
where $w^{\infty}(t,r)\in\HH^2(\Omega_{T-t})$ is a small solution of a non-autonomous quasilinear damped wave equation, and the explicit form of it is given in (\ref{YAE4-19}). Furthermore, by noticing (\ref{A1-2}), we obtain nonlinearly stable of them in $\HH^1(\Omega_{T-t})$ with the set $$\Omega_{T-t}:=\{r: r\in(0,\sigma(T-t)]\}.$$ Here we impose the boundary condition $w^{\infty}(t,r)|_{r\in\del\Omega_{T-t}}=w_r^{\infty}(t,r)|_{r\in\del\Omega_{T-t}}=0$.

\paragraph{Notations.}
Thoughout this paper,
we denote the open ball in $\mathbb{R}^3$ by $\mathbb{B}_R^3$ centered at zero with radius $R>0$. When $R=1$, we write $\mathbb{B}^3$.
$\mathbb{N}$ is the natural numbers $\{1,2,3,\ldots\}$. $\mathbb{Z}$ is the integer number.
The symbol $a\lesssim b$ means that there exists a positive constant $C$ such that $a\leq Cb$. $(a,b)^T$ denotes the column vector in $\mathbb{R}^2$.
$a\simeq b$ implies that $a\lesssim b$ and $b\lesssim a$. $\sigma(\mathcal{A})$ and $\sigma_p(\mathcal{A})$ are the spectrum and point spectrum of the closed linear operator $\mathcal{A}$, respectively. $R_{\mathcal{A}}(\nu):=(\nu-\mathcal{A})^{-1}$ for $\nu\notin\sigma(\mathcal{A})$. $Re$ denotes the real part of the complex number.

Furthermore, let $\sigma<1$, $l\geq1$ and $\Omega:=(0,\sigma]$, we denote the usual norm of Sobolev space $\HH^l(\Omega)$ by $\|\cdot\|_{\HH^l}$ for convenience.
The space $\LL^2((0,\infty);\HH^l(\Omega)$ is equipped with the norm 
$$
\|v\|^2_{\LL^2((0,\infty);\HH^l)}:=\int_0^{\infty}\|v(t,\cdot)\|^2_{\HH^l}dt,
$$
and the function space $\Ccal^l_{1}:=\bigcap_{i= 0}^1\CC^i((0,\infty);\HH^{l-i})$ with the norm
$$
\|v\|^2_{\Ccal^l_{1}}:=\sup_{t\in(0,\infty)}\sum_{i= 0}^1\|\partial^{i}_{t}v\|^2_{\HH^{l-i}}.
$$

The organization of this paper is as follows. In Section 2, we give the existence of explicit self-similar solutions $u_T^{\pm}(t,r)$ and the linear mode unstable of them (see Definition 2.1 for the mode stable and unstable of solutions). In section 3, firstly, the well-posedness result for the linearized radially symmetric membranes equation around the initial approximation function is shown by the semigroup theory, then the analysis of spectrum of linearized operator is given by construction of a Newton's polygon. After that, we show the well-posedness result for the linearized radially symmetric membranes equation at the general approximation step.
In section 4, the nonlinearly stable of explicit self-similar solutions $u_T^{\pm}(t,r)$ is proven by contruction of the Nash-Moser iteration scheme.
One can see \cite{H,Moser,Nash} more details on this method.

%================================================================================== 

\section{Linear mode unstable of lightlike self-similar solutions}\setcounter{equation}{0}
This section gives the proof of mode unstable for two explicit self-similar solutions of the radially symmetric membranes equation (\ref{E1-2}). Firstly, we show how to find two explicit self-similar solutions of equation (\ref{E1-2}) by the scaling invariant of (\ref{E1-3}) and the structure of nonlinear terms. Then the mode unstable of them are proved. 

\subsection{Two explicit lightlike self-similar solutions}
Self-similar solutions are invariant under the scaling (\ref{E1-3}), so we can set
$$
\aligned
&\rho=\frac{r}{T-t},\\
&u(t,r)=(T-t)\phi(\rho),
\endaligned
$$
where $T$ is a positive constant.

Inserting this ansatz into equation (\ref{E1-2}) by noticing
$$
\aligned
&\partial_tu(t,r)=-\phi(\rho)+\rho\phi'(\rho),\\
&\partial_{tt}u(t,r)=(T-t)^{-1}\rho^2\phi''(\rho),\\
&\partial_ru(t,r)=\phi'(\rho),\\
&\partial_{rr}u(t,r)=(T-t)^{-1}\phi''(\rho),\\
&\partial_{tr}u(t,r)=(T-t)^{-1}\rho\phi''(\rho),
\endaligned
$$
we obtain a quasilinear ordinary differential equation
\bel{E2-1}
\rho(1-\rho^2)\phi''+\phi'-\phi'\phi^2+2\rho\phi(\phi')^2-\rho\phi''\phi^2+(1-\rho^2)(\phi')^3=0.
\ee

Here we are only interested in smooth solutions in the backward lightcone of blowup point $(t,r)=(T,0)$, i.e. $\rho$ in the closed interval $[0,1]$. It satisfies
\bel{E2-1R1}
\frac{\partial^n}{\partial r^n}\phi(\frac{r}{T-t})|_{r=0}=(T-t)^{-n}\frac{d^n\phi}{dy^n}(0),
\ee
where if the $n$-th derivative is even, there is $\frac{d^n\phi}{dy^n}(0)\neq0$. If $n$ is odd, there is $\frac{d^n\phi}{dy^n}(0)=0$. This condition is different with wave map (e.g. see \cite{BB}).
One can see that every self-similar solution $\phi(\rho)\in\mathbb{C}^{\infty}[0,1]$ describes a singularity developing in finite time from smooth initial data. 
By the structure of nonlinear terms in equation (\ref{E2-1}), it holds
\begin{equation}\label{E2-1R}
\rho(1-\rho^2-\phi^2)\phi''+\phi'-\phi'\phi^2+2\rho\phi(\phi')^2+(1-\rho^2)(\phi')^3=0.
\end{equation}

Let
\begin{equation*}
1-\rho^2-\phi^2=0,
\end{equation*}
then there is
\begin{equation*}
\phi'-\phi'\phi^2+2\rho\phi(\phi')^2+(1-\rho^2)(\phi')^3=0.
\end{equation*}
Thus we obtain two explicit solutions
\begin{equation}\label{E2-2}
\phi(\rho)=\pm\sqrt{1-\rho^2},
\end{equation}
which satisfies (\ref{E2-1R1}).

Consequently, two explicit self-similar solutions of (\ref{E1-2}) are
\bel{E2-3}
u_T^{\pm}(t,r)=\pm(T-t)\sqrt{1-(\frac{r}{T-t})^2},
\ee
which exhibit the smooth for all $0<t<T$, but which break down at $t=T$ in the sense that
$$
\aligned
\partial_{rr}u_T^{\pm}(t,r)|_{r=0}&=\pm((T-t)^2-r^2)^{-\frac{1}{2}}|_{r=0}\pm r^2((T-t)^2-r^2)^{-\frac{3}{2}}|_{r=0}\nonumber\\
&=\pm\frac{1}{T-t}\rightarrow+\infty,~~as~~t\rightarrow T^{-},
\endaligned
$$
and the dynamical behavior of them are as attractors.

On the other hand, from the form of $u_T^{\pm}(t,r)$ in (\ref{E2-3}), it requires that
\begin{equation*}
1-(\frac{r}{T-t})^2\geq0.
\end{equation*}
So we consider the dynamical behavior of self-similar solutions in
of the backward lightcone
$$
\mathcal{B}_T:=\{(t,r):t\in(0,T),~~r\in[0,T-t]\}.
$$

\begin{remark}
Obviously, self-similar solutions (\ref{E2-3}) are cycloids. The sphere begins to expand until it starts to shrink and eventually collapses to a point in a finite time $\tilde{T}=T-r_0$, i.e. $u^{\pm}(\tilde{T},r_0)=0$.
Here $r_0$ is a fixed positive constant in the backward lightcone.
\end{remark}

\subsection{Mode unstable of self-similar solutions $u_T^{\pm}(t,r)$}

We introduce the similarity coordinates
\begin{equation}\label{E2-5}
\tau=-\log(T-t)+\log T,~~~~\rho=\frac{r}{T-t},
\end{equation}
and we set
\begin{equation*}
\tilde{v}(\tau,\rho)=T^{-1}e^{\tau}u(T(1-e^{-\tau}),T\rho e^{-\tau}),
\end{equation*}
then equation (\ref{E1-2}) is transformed into
\bel{E2-4}
\aligned
\tilde{v}_{\tau\tau}-\tilde{v}_{\tau}-(1-\rho^2)\tilde{v}_{\rho\rho}&-\frac{1}{\rho}\tilde{v}_{\rho}+2\rho \tilde{v}_{\tau\rho}+\tilde{v}_{\rho}^2(\tilde{v}_{\tau\tau}+\tilde{v}_{\tau}-2\tilde{v})+\tilde{v}_{\rho\rho}
(\tilde{v}-\tilde{v}_{\tau})^2\\
&-2\tilde{v}_{\rho}\tilde{v}_{\tau\rho}(\tilde{v}_{\tau}-\tilde{v})+\frac{1}{\rho}\tilde{v}_{\rho}(\tilde{v}_{\tau}-\tilde{v})^2+\frac{1}{\rho}(\rho^2-1)\tilde{v}_{\rho}^3=0.
\endaligned
\ee

In the similarity coordinates (\ref{E2-5}), the blowup time $T$ is changed to $\infty$. So the stability of blowup solutions for the radially symmetric membranes equation  (\ref{E1-2}) as $t\rightarrow T^{-}$ is transformed into the asymptotic stability for quasilinear wave equation (\ref{E2-4}) as $\tau\rightarrow\infty$. We now consider the problem of mode unstable for linear equation of (\ref{E2-4}).

Let the solution of (\ref{E2-4}) takes the form
\begin{eqnarray}\label{E2-4R}
\tilde{v}(\tau,\rho)&=&\phi(\rho)+v(\tau,\rho),
\end{eqnarray}
where $\phi(\rho)$ is defined in (\ref{E2-2}).

Inserting (\ref{E2-4R}) into equation (\ref{E2-4}), it holds
\bel{E2-6r}
\aligned
(1+\phi'^2)v_{\tau\tau}&-(1-\rho^2-\phi^2)v_{\rho\rho}-(1-\phi'^2+2\phi''\phi+\frac{2}{\rho}\phi'\phi)v_{\tau}+2(\phi\phi'+\rho)v_{\tau\rho}\\
&-\frac{1}{\rho}\Big(1+4\rho\phi'\phi-3(\rho^2-1)\phi'^2-\phi^2\Big)v_{\rho}+\frac{1}{\rho}(-2\rho\phi'^2+2\rho\phi''\phi+2\phi'\phi)v=f(\phi,v),
\endaligned
\ee
where 
$$
\aligned
f(\phi,v):=&2\phi v_{\rho}^2-v_{\rho}(v_{\tau\tau}+v_{\tau}-2v)(v_{\rho}+2\phi')-\phi''(v-v_{\tau})^2\\
&-v_{\rho\rho}\Big[(v-v_{\tau})^2+2\phi(v-v_{\tau})\Big]+2\phi'v_{\tau\rho}(-v+v_{\tau})-2\phi v_{\rho}v_{\rho\tau}\\
&+2v_{\rho}v_{\rho\tau}(-v+v_{\tau})-\frac{1}{\rho}v_{\rho}\Big[(v-v_{\tau})^2+2\phi(v-v_{\tau})\Big]-\frac{1}{\rho}\phi'(v-v_{\tau})^2\\
&-(\rho-\frac{1}{\rho})(v_{\rho}^3+3\phi'v_{\rho}^2).
\endaligned
$$

It follows from the exact form of $\phi(\rho)$ in (\ref{E2-2}) that
$$
1-\rho^2-\phi^2=0,
$$
$$
\phi\phi'+\rho=0,
$$
thus, equation  (\ref{E2-6r}) is reduced into 
\bel{E2-7RR}
\aligned
v_{\tau\tau}+3v_{\tau}-4v=f(\rho,v),
\endaligned
\ee
where
\bel{E2-7RR1}
\aligned
f(\rho,v)=&2(1-\rho^2)^{\frac{1}{2}}v_{\rho}^2-v_{\rho}(v_{\tau\tau}+v_{\tau}-2v)(v_{\rho}-2\rho(1-\rho^2)^{-\frac{1}{2}})\\
&+(1-\rho^2)^{-\frac{3}{2}}(v-v_{\tau})^2-v_{\rho\rho}\Big[(v-v_{\tau})^2+2(1-\rho^2)^{\frac{1}{2}}(v-v_{\tau})\Big]\\
&-2\rho(1-\rho^2)^{-\frac{1}{2}}v_{\tau\rho}(-v+v_{\tau})-2(1-\rho^2)^{\frac{1}{2}} v_{\rho}v_{\rho\tau}+2v_{\rho}v_{\rho\tau}(-v+v_{\tau})\\
&-\frac{1}{\rho}v_{\rho}\Big[(v-v_{\tau})^2+2(1-\rho^2)^{\frac{1}{2}}(v-v_{\tau})\Big]+(1-\rho^2)^{-\frac{1}{2}}(v-v_{\tau})^2\\
&-(\rho-\rho^{-1})(v_{\rho}^3-3\rho(1-\rho^2)^{-\frac{1}{2}}v_{\rho}^2).
\endaligned
\ee

It is easy to see equation (\ref{E2-7RR}) is loss of hyperbolicity. 
The linear equation of it is 
\bel{E2-7r}
v_{\tau\tau}+3v_{\tau}-4v=0.
\ee

Set
$$
v(\tau,\rho)=e^{\nu\tau}u_{\nu},
$$
then it leads to an eigenvalue problem
\begin{equation}\label{E2-4R1}
(\nu^2+3\nu-4)u_{\nu}=0.
\end{equation}

As in \cite{Cos0,Cos1,Cos2, Cos3}, we introduce  the definition of mode stable or unstable for the solution $u_{\nu}$ of (\ref{E2-4R1}).

\begin{definition}
A non-zero smooth solution $u_{\nu}$ of (\ref{E2-4R1}) is called mode stable if $Re~\nu<0$ holds. The eigenvalue $\nu$ is called a stable eigenvalue. Otherwise, if $Re~\nu\geq0$, the non-zero smooth solution $u_{\nu}$ of (\ref{E2-4R1}) is called mode unstable. Then $\nu$ is called an unstable eigenvalue.
\end{definition}

From (\ref{E2-4R1}), the linear equation (\ref{E2-7r}) admits two eigenvalues $\nu=4$ and $\nu=-1$ .
By Definition 2.1, we know that two explicit self-similar solutions $u_T^{\pm}(t,r)$ given in (\ref{E2-3}) of timelike extremal hypersurfaces equation (\ref{E1-2}) are mode unstable inside the backward lightcone $\mathcal{B}_T$.

\section{Well-posedness of linearized time evolution}\setcounter{equation}{0}
We have shown that two explicit lightlike self-similar solutions $u_T^{\pm}(t,r)$ in (\ref{E2-3}) are mode unstable. There is an unstable eigenvalue in linear equation. 
It is natural to investigate whether explicit singular solutions given in (\ref{E2-3}) are nonlinearly stable or unstable.
In order to get a positive answer, we should overcome two 
difficulties. One is to deal with an unstable eigenvalue in linear equation. Another difficulty is to solve the ``loss of derivatives'' in nonlinear equation. Luckily, we find that if we choose a suitable initial approximation function $v^{(0)}(\tau,\rho)$, then linearizing quasilinear wave equation (\ref{E2-7RR}) around $v^{(0)}(\tau,\rho)$, we get a linear equation only admitted stable eigenvalues. 
Meanwhile, it causes a small error term. Following the Nash-Moser iteration process \cite{Yan}, we can obtain a desired solution of equation (\ref{E2-7RR}) as follows
$$
v(\tau,\rho)=v^{(0)}(\tau,\rho)+w^{(0)}(\tau,\rho)+\sum_{k=1}^{\infty}h^{(k)}(\tau,\rho)+\eps w_0(\rho)-\eps (e^{-\tau}-1)w_1(\rho),
$$
where $w^{(0)}(\tau,\rho)\sim\eps$ and $\sum_{k=1}^{\infty}h^{(k)}\sim\Ocal(\eps^p)$ with a fixed constant $p\geq2$ in some Sobolev space. Meanwhile, we can overcome the ``loss of derivatives'' of nonlinear equation by means of Nash-Moser iteration.

\subsection{The $\CC^0$-semigroup of linearized operator at the initial aproximation step}
Let positive constants $\kappa\sim1$ and $\kappa<1$.
We choose the initial approximation function
$$
v^{(0)}(\tau,\rho)=(\kappa-1)\phi(\rho),
$$
where $\phi(\rho)$ is defined in (\ref{E2-2}).

Linearizing equation (\ref{E2-7RR}) around $v^{(0)}$, then we get the linearized operator as follows
\bel{YYE3-1}
\aligned
\Lcal^{(0)}(v):=\Big(1+(\kappa^2-1)\rho^2\Big)v_{\tau\tau}&-(1-\kappa^2)(1-\rho^2)^2v_{\rho\rho}+\Big(4\kappa^2-1+(\kappa-1)^2\rho^2\Big)v_{\tau}\\
&\quad +2\rho(1-\kappa^2)(1-\rho^2)v_{\tau\rho}-(1-\kappa^2)(1-\rho^2)\rho^{-1}v_{\rho}-4\kappa^2v.
\endaligned
\ee
%with the initial error term
%\bel{YYE3-1}
%E^{(0)}:=v^{(0)}_{\tau\tau}+3v^{(0)}_{\tau}-4v^{(0)}-f(\rho,v^{(0)}),
%\ee
%where $f$ is given in (\ref{E2-7RR1}).
%so let 
%\bel{E3-1}
%\aligned
%\tilde{v}(\tau,\rho)&=\phi(\rho)+v^{(0)}(\tau,\rho)+v(\tau,\rho)\\
%&=\kappa\phi(\rho)+v(\tau,\rho),
%\endaligned
%\ee
%substituting (\ref{E3-1}) into equation (\ref{E2-4}), 
%\bel{nnE2-4}
%\aligned
%\tilde{v}_{\tau\tau}-\tilde{v}_{\tau}&-(1-\rho^2)\tilde{v}_{\rho\rho}-\frac{1}{\rho}\tilde{v}_{\rho}+2\rho \tilde{v}_{\tau\rho}+\tilde{v}_{\rho}^2(\tilde{v}_{\tau\tau}+\tilde{v}_{\tau}-2\tilde{v})+\tilde{v}_{\rho\rho}
%(\tilde{v}-\tilde{v}_{\tau})^2\\
%&-2\tilde{v}_{\rho}\tilde{v}_{\tau\rho}(\tilde{v}_{\tau}-\tilde{v})+\frac{1}{\rho}\tilde{v}_{\rho}(\tilde{v}_{\tau}-\tilde{v})^2+\frac{1}{\rho}(\rho^2-1)\tilde{v}_{\rho}^3=-2\kappa(1-\kappa^2)(1-\rho^2)^{-\frac{1}{2}},
%\endaligned
%\ee

In order to process well-posedness result, we introduce the radial Sobolev functions $\hat{v}:\mathbb{B}_R^3\rightarrow\mathbb{C}$, i.e. $\hat{v}(x)=v(|x|)$ for all $x\in\mathbb{B}_R^3$. Here $v:(0,R)\rightarrow\mathbb{C}$.
Following \cite{D2}, for any $n>1$, we define
\begin{equation*}
v\in\mathbb{H}_{\rho}^n(\mathbb{B}_R^3),
\end{equation*}
if and only if
\begin{equation*}
\hat{v}\in\mathbb{H}^n(\mathbb{B}_R^3)=\mathbb{W}^{n,2}(\mathbb{B}_R^3),
\end{equation*}
with the norm
\begin{equation*}
\|\hat{v}\|_{\mathbb{H}^n(\mathbb{B}_R^3)}=\|\hat{v}\|_{\mathbb{W}^{n,2}(\mathbb{B}_R^3)}.
\end{equation*}

Obviously, $\mathbb{H}_{\rho}^n(\mathbb{B}_R^3)$ is a Banach space. For convenience, throughout this paper we do not distinguish between
$v$ and $\hat{v}$.

Define the Hilbert spaces
$$
\mathcal{H}=\mathbb{H}_{\rho}^2(\mathbb{B}_R^3)\times\mathbb{H}_{\rho}^1(\mathbb{B}_R^3),
$$
and
$$
\mathcal{H}_0=\mathbb{H}_{\rho}^1(\mathbb{B}_R^3)\times\mathbb{L}_{\rho}^2(\mathbb{B}_R^3),
$$
with the induced norms
$$
\|v\|^2_{\mathcal{H}}:=\|v_1\|^2_{\mathbb{H}_{\rho}^2(\mathbb{B}_R^3)}+\|v_2\|^2_{\mathbb{H}_{\rho}^1(\mathbb{B}_R^3)},
$$
and
$$
\|v\|^2_{\mathcal{H}_0}:=\|v_1\|^2_{\mathbb{H}_{\rho}^1(\mathbb{B}_R^3)}+\|v_2\|^2_{\mathbb{L}_{\rho}^2(\mathbb{B}_R^3)},
$$
respectively.

Moreover, it holds
$$
\|v(|\cdot|)\|_{\mathbb{L}_{\rho}^2(\mathbb{B}^3)}^2:=\|\rho v\|^2_{\mathbb{L}^2(0,1)},
$$

$$
\|v(|\cdot|)\|_{\mathbb{H}_{\rho}^1(\mathbb{B}^3)}^2\simeq\|v\|_{\mathbb{L}^2(0,1)}^2+\|\rho\partial_{\rho}v\|_{\mathbb{L}^2(0,1)}^2,~~\forall v\in\mathbb{H}_{\rho}^1(\mathbb{B}^3),
$$
and
$$
\|v(|\cdot|)\|_{\mathbb{H}_{\rho}^2(\mathbb{B}^3)}^2\simeq\|v\|_{\mathbb{L}^2(0,1)}^2+\sum_{i=1}^2\|(\cdot)^{i}v^{(i)}\|_{\mathbb{L}^2(0,1)}^2,~~\forall v\in\mathbb{H}_{\rho}^2(\mathbb{B}^3),
$$
where $\mathbb{B}^3$ denotes a ball with radius $1$ in $\mathbb{R}^3$. 

We now consider the well-posedness of linear problem
\bel{Y1-1}
\aligned
&\Lcal^{(0)}v=0,\quad \tau>0,\\
& v(0,\rho)=v_0,\quad v_{\tau}(0,\rho)=v_1
\endaligned
\ee
inside the backward lightcone set
$$
\overline{\mathcal{B}}_{\sigma}:=\{(\tau,\rho):\tau\in(0,+\infty),\quad \rho\in(0,\sigma]\}.
$$

More precisely, by (\ref{YYE3-1}), the linear problem (\ref{Y1-1}) has the form
\bel{E2-7}
\aligned
\Big(1+(\kappa^2-1)\rho^2\Big)v_{\tau\tau}&+\Big(4\kappa^2-1+(\kappa-1)^2\rho^2\Big)v_{\tau}-(1-\kappa^2)(1-\rho^2)^2v_{\rho\rho}\\
&+2\rho(1-\kappa^2)(1-\rho^2)v_{\tau\rho}-\rho^{-1}(1-\kappa^2)(1-\rho^2)v_{\rho}-4\kappa^2v=0,
\endaligned
\ee
with the initial data
%\bel{E2-8}
%v(0,\rho)=v_0^T(\rho),~~v_{\tau}(0,\rho)=v_1^T(\rho),
%\ee
%where 
\bel{E2-8R1}
v(0,\rho):=v_0^T(\rho)=T^{-1}u_0(T\rho)-(\kappa-1)\phi(\rho),
\ee
\bel{E2-8R2}
v_{\tau}(0,\rho):=v_1^T(\rho)=T^{-1}u_0(T\rho)+(1-\rho)u_1(T\rho)-(\kappa-1)\phi(\rho).
\ee

Since $\kappa\sim1$ and $\kappa<1$, the coefficient of $v_{\tau}$ is positive, i.e. 
$$
4\kappa^2-1+(\kappa-1)^2\rho^2>0.
$$
Thus  (\ref{E2-7}) is a linear  damped wave equation with variable coefficient $\rho$, which corresponds to a non-selfadjoint linear wave operator. Since there is a term $\rho^{-1}$ in (\ref{E2-7}), it has a singular point at $\rho=0$, and it is not easy to study the specturm problem of (\ref{E2-7}).

We now rewrite the linear equation (\ref{E2-7}) as an evolution equation. Let $\psi=(v,w)$ and $w=v_{\tau}$. Then the linear equation (\ref{E2-7}) is equivalent to
\begin{equation}\label{E3-32}
\frac{d}{d\tau}\psi(\tau)=\mathcal{A}\psi(\tau),~~\tau>0,
\end{equation}
with the initial data
$$
\psi_0:=\psi(0,\rho)=(v_0^T(\rho),v_1^T(\rho)),
$$
where $v_0^T(\rho),v_1^T(\rho)$ are given by (\ref{E2-8R1})-(\ref{E2-8R2}), the operator $\mathcal{A}$ is independent of $\tau$, it has the form
\bel{E3-32R1}
\aligned
\mathcal{A}=\left(
\begin{array}{cccc}
0&1\\
\frac{(1-\kappa^2)(1-\rho^2)^2\partial_{\rho\rho}+(1-\kappa^2)(1-\rho^2)\rho^{-1}\partial_{\rho}+4\kappa^2}{1+(\kappa^2-1)\rho^2}&\frac{-[4\kappa^2-1+(\kappa^2-1)^2\rho^2]-2(1-\kappa^2)\rho(1-\rho^2)\partial_{\rho}}{1+(\kappa^2-1)\rho^2}
\end{array}
\right).
\endaligned
\ee

We define
\bel{E3-32R2}
\aligned
\mathcal{A}_0:=\left(
\begin{array}{cccc}
0&1\\
\varpi_1(\rho)&\varpi_2(\rho)
\end{array}
\right),
\endaligned
\ee
and
\bel{E3-34}
\mathcal{A}_1=\left(
\begin{array}{cccc}
0&0\\
\frac{4\kappa^2}{1+(\kappa^2-1)\rho}+1-(\rho^{-1}a(\rho)+b(\rho))\partial_{\rho}&0
\end{array}
\right),
\ee
where
\bel{E3-34R1}
\aligned
\varpi_1(\rho)&=\frac{(1-\kappa^2)(1-\rho^2)^2\partial_{\rho\rho}+(1-\kappa^2)(1-\rho^2)\rho^{-1}\partial_{\rho}+(\rho^{-1}a(\rho)+b(\rho))\partial_{\rho}}{1+(\kappa^2-1)\rho^2}-1,\\
\varpi_2(\rho)&=\frac{-[4\kappa^2-1+(\kappa^2-1)^2\rho^2]-2(1-\kappa^2)\rho(1-\rho^2)\partial_{\rho}}{1+(\kappa^2-1)\rho^2},\\
a(\rho)&=-\frac{\rho^2(1-\kappa^2)(1-\rho^2)}{1+(\kappa^2-1)\rho^2}-\rho^2,\\
b(\rho)&=-\frac{4\rho(1-\kappa^2)(1-\rho^2)}{1+(\kappa^2-1)\rho^2}+\frac{2\rho(1-\kappa^2)^2(1-\rho^2)^2}{(1+(\kappa^2-1)\rho^2)^2}.
\endaligned
\ee

Thus it holds
\bel{E3-35}
\mathcal{A}=\mathcal{A}_0+\mathcal{A}_1.
\ee

Following \cite{D2}, we set
$$
\mathbb{D}(\mathcal{A}_0):=\{u=(u_1,u_2)\in\mathbb{C}^{\infty}(0,1)^2\cap\mathcal{H}: u_1\in\mathbb{C}^{2}(0,1),~~u_1''(0)=0,~~u_2\in\mathbb{C}^{
1}(0,1)\},
$$
where for some positive constants $c_0$ and $c_1$, and it holds
$$
u_i=c_0\rho v_i(\rho)+c_1\rho^2v_i'(\rho),~~i=1,2.
$$
Note that $\mathbb{C}^{\infty}(\overline{\mathbb{B}^3})$ is dense in $\mathbb{H}^n(\mathbb{B}^3)$. So $\mathbb{C}_e^{\infty}[0,1]^2$ is dense in $\mathcal{H}$, where
$$
\mathbb{C}_e^{\infty}[0,1]^2:=\{v\in\mathbb{C}^{\infty}[0,1]^2: v^{(2j+1)}(0)=0,~~j=0,1,2,3,\ldots\}\subset\mathbb{D}(\mathcal{A}_0).
$$

Furthermore, let
$$
\mathcal{A}_0u=([\mathcal{A}_0u]_1,[\mathcal{A}_0u]_2)=(u_2,\varpi_1(\rho)u_1+\varpi_2(\rho)u_2).
$$

\begin{lemma}
There is $\mathcal{A}_0u\in\mathcal{H}$ for any $u\in\mathbb{D}(\mathcal{A}_0)$.
\end{lemma}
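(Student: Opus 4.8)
The plan is to unwind the definition $\mathcal{A}_0u=(u_2,\ \varpi_1(\rho)u_1+\varpi_2(\rho)u_2)$ and to verify the two membership statements separately: that $u_2$ lies in the first factor $\mathbb{H}_{\rho}^2(\mathbb{B}_R^3)$ of $\mathcal{H}$, and that $\varpi_1(\rho)u_1+\varpi_2(\rho)u_2$ lies in the second factor $\mathbb{H}_{\rho}^1(\mathbb{B}_R^3)$. Throughout I would use that every $u=(u_1,u_2)\in\mathbb{D}(\mathcal{A}_0)$ is $\mathbb{C}^{\infty}$ on $(0,1)$ and, through the representation $u_i=c_0\rho v_i(\rho)+c_1\rho^2 v_i'(\rho)$ together with the conditions imposed at $\rho=0$ (as in \cite{D2}), behaves near the origin like $\rho$ times a smooth even function; this pins down the one-sided limits of $u_i,u_i',u_i''$ at $\rho=0$ that enter the weighted norms recalled above.

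For the first component I would observe that, as a mere element of $\mathcal{H}$, one only knows $u_2\in\mathbb{H}_\rho^1(\mathbb{B}_R^3)$, so the extra smoothness built into $\mathbb{D}(\mathcal{A}_0)$ is exactly what is needed: since $u_2\in\mathbb{C}^{\infty}(0,1)$ with the endpoint behaviour dictated by its domain representation, each of the three terms of $\|u_2(|\cdot|)\|_{\mathbb{H}_\rho^2(\mathbb{B}^3)}^2\simeq\|u_2\|^2_{\mathbb{L}^2(0,1)}+\sum_{i=1}^2\|\rho^{i}u_2^{(i)}\|_{\mathbb{L}^2(0,1)}^2$ is finite — the weights $\rho,\rho^2$ absorb the mild growth of $u_2',u_2''$ as $\rho\to0^+$, and nothing is singular near $\rho=1$ — hence $[\mathcal{A}_0u]_1=u_2\in\mathbb{H}_\rho^2(\mathbb{B}_R^3)$. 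Before treating the second component I would record the elementary but crucial fact that the common denominator is bounded away from $0$: since $\kappa<1$ and $\rho\in(0,1)$ (indeed $\rho\le\sigma<1$ on $\overline{\mathcal{B}}_{\sigma}$), one has $1+(\kappa^2-1)\rho^2=1-(1-\kappa^2)\rho^2\ge\kappa^2>0$. Consequently the coefficients of $\varpi_1,\varpi_2$ in (\ref{E3-34R1}) are $\mathbb{C}^{\infty}$ on $(0,1)$, the factors $(1-\rho^2)$ and $(1-\rho^2)^2$ make the second-order and the remaining first-order coefficients harmless (indeed degenerate) as $\rho\to1$, and the only genuinely singular coefficient left is the explicit $\rho^{-1}$ multiplying $\partial_\rho$ in $\varpi_1$.

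For the second component, $\varpi_2(\rho)u_2$ is a first-order operator with smooth bounded coefficients acting on the smooth function $u_2$, hence trivially in $\mathbb{H}_\rho^1(\mathbb{B}_R^3)$ by the first step; the substance is $\varpi_1(\rho)u_1$. Here I would collect the $\rho^{-1}\partial_\rho$-terms of $\varpi_1$ into the coefficient $\rho^{-1}\big[(1-\kappa^2)(1-\rho^2)+a(\rho)\big]$ and use that $a(\rho),b(\rho)$ were introduced in the splitting $\mathcal{A}=\mathcal{A}_0+\mathcal{A}_1$ of (\ref{E3-35}) precisely so that, when $\varpi_1$ acts on a function of the admissible form $u_1=c_0\rho v_1+c_1\rho^2 v_1'$ with $v_1$ extending to a smooth even function, the result is no more singular at $\rho=0$ than the radial profile of an $\mathbb{H}^1(\mathbb{B}_R^3)$ function. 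Substituting this form of $u_1$, expanding $u_1',u_1''$, differentiating $\varpi_1(\rho)u_1$ once, and estimating term by term against the weights in $\|\cdot(|\cdot|)\|_{\mathbb{H}_\rho^1(\mathbb{B}^3)}^2\simeq\|\cdot\|_{\mathbb{L}^2(0,1)}^2+\|\rho\partial_\rho(\cdot)\|_{\mathbb{L}^2(0,1)}^2$ then finishes the argument; together with the first step this yields $\mathcal{A}_0u\in\mathcal{H}$.

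The step I expect to be the main obstacle is precisely the control of $\varpi_1(\rho)u_1$ near $\rho=0$: the $\rho^{-1}\partial_\rho$ term is genuinely singular, so the verification must combine the exact algebraic form of $a(\rho),b(\rho)$ with the exact structure of the domain elements at the origin (the representation $u_i=c_0\rho v_i+c_1\rho^2 v_i'$, the endpoint condition on $u_1$, and the smooth even extension of the $v_i$), and show that these conspire to prevent a $\rho^{-2}$-type singularity from forming after the extra differentiation demanded by the $\mathbb{H}_\rho^1$-norm. The behaviour near $\rho=1$ and the interior regularity are by comparison routine, as is the first component; those are the parts I would dispatch quickly.
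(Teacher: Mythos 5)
Your plan matches the paper's proof in structure and in its key idea: verify each component of $\mathcal{A}_0u$ separately, use that the denominator $1+(\kappa^2-1)\rho^2\in[\kappa^2,1]$ is bounded away from zero, and recognize that the only genuine obstacle is the $\rho^{-1}\partial_\rho$ term in $\varpi_1$ near $\rho=0$, resolved by the vanishing of $(u_1)_\rho$ at the origin built into $\mathbb{D}(\mathcal{A}_0)$. The paper handles that obstacle with a short de l'H\^{o}pital argument, $\lim_{\rho\to0^+}\rho^{-1}(u_1)_\rho(\rho)=(u_1)_{\rho\rho}(0)$, which is the crisp form of the term-by-term estimate you sketch via the domain representation.
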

\begin{proof}
The first component of $\mathcal{A}_0u$ is $[\mathcal{A}_0u]_1=u_2$. Since $u\in\mathbb{D}(\mathcal{A}_0)$, $u_2\in\mathbb{H}_{\rho}^2(\mathbb{B}_R^3)$. By (\ref{E3-32R1}) and (\ref{E3-34R1}),
the second component of $\mathcal{A}_0u$ is
\begin{eqnarray*}
[\mathcal{A}_0u]_2 &=&\varpi_1(\rho)u_1+\varpi_2(\rho)u_2\\
&=&\frac{(1-\kappa^2)(1-\rho^2)^2(u_1)_{\rho\rho}+(1-\kappa^2)(1-\rho^2)\rho^{-1}(u_1)_{\rho}+(\rho^{-1}a(\rho)+b(\rho))(u_1)_{\rho}}{1+(\kappa^2-1)\rho^2}-u_1\\
&&\quad+\frac{-[4\kappa^2-1+(\kappa^2-1)^2\rho^2]u_2-2(1-\kappa^2)\rho(1-\rho^2)(u_2)_{\rho}}{1+(\kappa^2-1)\rho^2}.
\end{eqnarray*}

Using de l'H\^{o}pital's rule and noticing $v\in\mathbb{D}(\mathcal{A}_0)$, it holds
\bel{E3-33}
\lim_{\rho\rightarrow0^+}\frac{(u_1)_{\rho}(\rho)}{\rho}=\lim_{\rho\rightarrow0^+}\frac{(u_1)_{\rho}(\rho)-(u_1)_{\rho}(0)}{\rho-0}=(u_1)_{\rho\rho},
\ee
which combing with $1+(\kappa^2-1)\rho^2\in[\kappa^2,1]$ gives that $[\mathcal{A}_0u]_2\in\mathbb{H}_{\rho}^1(\mathbb{B}_R^3)$. Here $\kappa\sim1$ and $\kappa<1$.
\end{proof}

We introduce the sesquilinear form
$$
(a|b)_{\mathbb{L}^2(0,1)}=\int_0^1a_1\bar{b}_1d\rho+\int_0^1a_2\bar{b}_2d\rho,\quad \forall a(\rho)=(a_1,a_2),\quad b(\rho)=(b_1,b_2)\in\mathbb{C}^2[0,1],
$$
which means that the norm of $\mathbb{L}^2(0,1)$ is equivalent to $\sqrt{(a|a)_{\mathbb{L}^2(0,1)}}$.

\begin{lemma}
The operator $\mathcal{A}_0$ defined in (\ref{E3-32R2}) is a closed and densely defined linear dissipative operator in $\mathcal{H}$.
\end{lemma}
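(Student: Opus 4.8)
The plan is to verify the three asserted properties in turn: that $\mathcal{A}_0$ is densely defined, that it is dissipative, and that it is closed. The first is immediate from the discussion preceding the lemma, since $\mathbb{C}_e^{\infty}[0,1]^2\subset\mathbb{D}(\mathcal{A}_0)$ and $\mathbb{C}_e^{\infty}[0,1]^2$ is dense in $\mathcal{H}$, whence $\overline{\mathbb{D}(\mathcal{A}_0)}=\mathcal{H}$. The remaining two I would obtain from an energy computation, respectively for $(\mathcal{A}_0 u\mid u)_{\mathcal{H}}$ and for the resolvent equation $(\lambda-\mathcal{A}_0)u=f$; the dissipativity estimate, because of the coefficient degenerating at $\rho=1$, the $\rho^{-1}$-singularity at $\rho=0$, and the bookkeeping of the highest-order cross terms, is where I expect the real work to be, with closedness then following essentially for free.

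\paragraph{Dissipativity.} Fix $u=(u_1,u_2)\in\mathbb{D}(\mathcal{A}_0)$ and compute $\mathrm{Re}(\mathcal{A}_0 u\mid u)_{\mathcal{H}}$ using the equivalent weighted $\mathbb{L}^2(0,1)$-representations of the $\mathbb{H}_{\rho}^1$- and $\mathbb{H}_{\rho}^2$-inner products recorded above. Since $\mathcal{A}_0 u=(u_2,\ \varpi_1(\rho)u_1+\varpi_2(\rho)u_2)$, the quantity splits into $\mathrm{Re}(u_2\mid u_1)_{\mathbb{H}_{\rho}^2}$ and $\mathrm{Re}(\varpi_1 u_1+\varpi_2 u_2\mid u_2)_{\mathbb{H}_{\rho}^1}$, neither of which is separately sign-definite. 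The mechanism I would use is the standard damped-wave energy cancellation: integrating by parts in $\rho$ the second-order part of $\varpi_1 u_1$, whose coefficient is $(1-\kappa^2)(1-\rho^2)^2/(1+(\kappa^2-1)\rho^2)$, against $u_2$ in $\mathbb{H}_{\rho}^1$, the resulting highest-order terms cancel those coming from $\mathrm{Re}(u_2\mid u_1)_{\mathbb{H}_{\rho}^2}$ modulo boundary terms and terms bounded by $\|u\|_{\mathcal{H}}^2$. These boundary terms vanish at $\rho=0$ because of the conditions built into $\mathbb{D}(\mathcal{A}_0)$ (in particular $u_1''(0)=0$), handled as in the de l'H\^{o}pital argument in the proof of Lemma 3.1 for the $\rho^{-1}$ terms, and vanish at $\rho=1$ because the principal coefficient $(1-\rho^2)^2$ degenerates there, so that no outer boundary condition is needed. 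For the rest, the zeroth-order part of $\varpi_2$ equals $-(4\kappa^2-1+(\kappa^2-1)^2\rho^2)/(1+(\kappa^2-1)\rho^2)$, which is strictly negative since $\kappa\sim1$ with $\kappa<1$ forces $4\kappa^2-1>0$ and $1+(\kappa^2-1)\rho^2\in[\kappa^2,1]$; this is a genuinely negative contribution, while the first-order part $-2(1-\kappa^2)\rho(1-\rho^2)(1+(\kappa^2-1)\rho^2)^{-1}\partial_\rho u_2$ is integrated by parts against $u_2$ to a term controlled by $\|u_2\|_{\mathbb{H}_{\rho}^1}^2$. Collecting everything yields $\mathrm{Re}(\mathcal{A}_0 u\mid u)_{\mathcal{H}}\le\omega\|u\|_{\mathcal{H}}^2$ for some $\omega\ge0$ depending only on $\kappa$; passing to an equivalent inner product on $\mathcal{H}$ adapted to these coefficients (equivalently, replacing $\mathcal{A}_0$ by $\mathcal{A}_0-\omega$) makes the right-hand side nonpositive, which is the dissipativity asserted. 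The careful tracking of this top-order cancellation, together with the behaviour at the singular endpoint $\rho=0$ and the degenerate endpoint $\rho=1$, is, I expect, the main obstacle.

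\paragraph{Closedness.} I would obtain this together with a resolvent bound. For $\lambda$ with $\mathrm{Re}\,\lambda$ sufficiently large, consider $(\lambda-\mathcal{A}_0)u=f$ with $f=(f_1,f_2)\in\mathcal{H}$: the first row gives $u_2=\lambda u_1-f_1$, and substituting into the second row reduces the problem to a single second-order linear ODE for $u_1$ on $(0,1)$ with a regular singular point at $\rho=0$ and a degenerate endpoint at $\rho=1$. A Frobenius/indicial analysis at the origin selects the unique solution compatible with membership in $\mathbb{D}(\mathcal{A}_0)$ (smooth and even at $\rho=0$, with $u_1''(0)=0$), while the degeneracy at $\rho=1$ imposes no further condition; one checks that this $u_1$ together with $u_2=\lambda u_1-f_1$ lies in $\mathbb{D}(\mathcal{A}_0)$ and satisfies $\|u\|_{\mathcal{H}}\lesssim\|f\|_{\mathcal{H}}$. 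Combined with the dissipativity estimate, which gives $\|(\lambda-\mathcal{A}_0)u\|_{\mathcal{H}}\ge(\mathrm{Re}\,\lambda-\omega)\|u\|_{\mathcal{H}}$, this shows $\lambda-\mathcal{A}_0$ maps $\mathbb{D}(\mathcal{A}_0)$ bijectively onto $\mathcal{H}$ with bounded inverse, so $\mathcal{A}_0$ is closed --- and in fact $m$-dissipative up to the shift $\omega$, which is precisely what is needed to generate the $C_0$-semigroup in the next step and to absorb the bounded perturbation $\mathcal{A}_1$. Alternatively, if one prefers to avoid the resolvent ODE here, closedness follows directly by checking that the graph of $\mathcal{A}_0$ is closed, using the smoothness of $\varpi_1,\varpi_2$ on $[0,1)$ and the fact that convergence in $\mathcal{H}$ controls the weighted derivatives entering $\mathcal{A}_0 u$; but the resolvent route is cleaner and sets up the subsequent Lumer--Phillips argument.
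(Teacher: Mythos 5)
Your overall strategy matches the paper's: density of the domain from the $\mathbb{C}_e^{\infty}[0,1]^2\subset\mathbb{D}(\mathcal{A}_0)$ observation, dissipativity from an integration-by-parts energy computation in the weighted $\mathbb{L}^2(0,1)$ representation of the $\mathcal{H}$-inner product, with the $\rho^{-1}$-endpoint handled by the de l'H\^{o}pital-type identity (\ref{E3-33}) and the $\rho=1$ endpoint handled by the degeneracy $(1-\rho^2)^2$. The paper likewise splits $\mathrm{Re}(\mathcal{A}_0u\mid u)_{\mathcal{H}}$ into the cross term $\mathrm{Re}(u_2,u_1)$ in $\mathbb{H}_{\rho}^2$ and the contributions of $\varpi_1,\varpi_2$ in $\mathbb{H}_{\rho}^1$, integrates the leading $(1-\kappa^2)(1-\rho^2)^2\partial_{\rho\rho}$ term by parts to recover the exact opposite of the cross term (its relations (\ref{E3-36Y2}) and (\ref{E3-36Y5})), and obtains the strictly negative contribution from $\varpi_2$ via the sign of $4\kappa^2-1+(\kappa^2-1)^2\rho^2$ (its (\ref{E3-36Y3})). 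So in essence, same route.

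Two points of difference worth noting. First, the paper claims \emph{exact} dissipativity, $\mathrm{Re}(\mathcal{A}_0u\mid u)_{\mathcal{H}}\le 0$ in the original inner product, by asserting that the top-order cancellations are complete (it dismisses the corresponding $\mathbb{H}_{\rho}^1$-level cancellation (\ref{E3-36Y5}) with ``Similarly, by direct computations''); you hedge with a shift $\omega\ge0$ and an equivalent inner product. Your hedge is prudent given the amount of bookkeeping left unwritten in the paper, but be aware that if $\omega>0$ the lemma as literally stated (dissipativity in $\mathcal{H}$, with the inner product as defined) is not proved without renorming, and the paper does not renorm --- it uses the original $\mathcal{H}$ in Lemma 3.3 and Proposition 3.1. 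If you go the shifted route you would need to carry the shift through the invertibility argument as well. Second, for closedness the paper simply writes ``It is easy to check that $\mathcal{A}_0$ is a densely defined and closed linear operator in $\mathcal{H}$'' and moves on; your proposal to obtain closedness from a resolvent/surjectivity argument is in fact exactly what the paper does \emph{next}, in Lemma 3.3, where $\mathcal{A}_0^{-1}$ is constructed by reducing to a second-order ODE on $(0,1)$ and writing an explicit integral kernel. So you have anticipated the subsequent lemma, not replaced the present proof by something genuinely different.
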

\begin{proof}
It is easy to check that $\mathcal{A}_0$ is a densely defined and closed linear operator in $\mathcal{H}$. Here we only need to prove that $\mathcal{A}_0$ is dissipative, i.e. $$(\mathcal{A}_0u|u)\leq0.$$

For any $u=(u_1,u_2)\in\mathbb{D}(\mathcal{A}_0)$, by (\ref{E3-32R2}) and (\ref{E3-34R1}), it holds
\bel{E3-36}
\aligned
Re(\mathcal{A}_0u|u)_{\mathcal{H}}&=Re(u_2,u_1)_{\mathbb{H}_{\rho}^2}+Re([\mathcal{A}_0u]_2,u_2)_{\mathbb{H}_{\rho}^1},\\
&=Re(u_2,u_1)_{\mathbb{L}^2(0,1)}+Re(\rho(u_1)_{\rho},\rho(u_2)_{\rho})_{\mathbb{L}^2(0,1)}+Re(\rho^2(u_1)_{\rho\rho},\rho^2(u_2)_{\rho\rho})_{\mathbb{L}^2(0,1)}\\
&\quad+Re([\mathcal{A}_0u]_2,u_2)_{\mathbb{L}^2(0,1)}+Re(\rho([\mathcal{A}_0u]_2)_{\rho},\rho(u_2)_{\rho})_{\mathbb{L}^2(0,1)}\\
&=Re\int_0^1u_2\bar{u}_1d\rho+Re\int_0^1\rho^2(u_2)_{\rho}(\bar{u}_1)_{\rho}d\rho+Re\int_0^1\rho^4(u_2)_{\rho\rho}(\bar{u}_1)_{\rho\rho}d\rho\\
&\quad+Re\int_0^1(\varpi_1(\rho)u_1)\bar{u}_2d\rho+Re\int_0^1(\varpi_2(\rho)u_2)\bar{u}_2d\rho
+Re\int_0^1\rho^2(\varpi_1(\rho)u_1)_{\rho}(\bar{u}_2)_{\rho}d\rho\\
&\quad+Re\int_0^1\rho^2(\varpi_2(\rho)u_2)_{\rho}(\bar{u}_2)_{\rho}d\rho,
\endaligned
\ee
where
\bel{E3-36Y1}
\aligned
&Re\int_0^1(\varpi_1(\rho)u_1)\bar{u}_2d\rho=Re\int_0^1\frac{(1-\kappa^2)(1-\rho^2)^2(u_1)_{\rho\rho}\bar{u}_2}{1+(\kappa^2-1)\rho^2}d\rho\\
&\quad\quad\quad\quad\quad\quad\quad\quad\quad\quad+Re\int_0^1\frac{(1-\kappa^2)(1-\rho^2)\rho^{-1}(u_1)_{\rho}\bar{u}_2+(\rho^{-1}a(\rho)+b(\rho))(u_1)_{\rho}\bar{u}_2}{1+(\kappa^2-1)\rho^2}-u_1\bar{u}_2d\rho,\\
&Re\int_0^1(\varpi_2(\rho)u_2)\bar{u}_2d\rho=Re\int_0^1\frac{-[4\kappa^2-1+(\kappa^2-1)^2\rho^2]u_2\bar{u}_2-2(1-\kappa^2)\rho(1-\rho^2)(u_2)_{\rho}\bar{u}_2}{1+(\kappa^2-1)\rho^2}d\rho.
\endaligned
\ee
%$$
%\aligned
%&Re\int_0^1\rho^2(\varpi_1(\rho)u_1)_{\rho}(\bar{u}_2)_{\rho}d\rho=Re\int_0^1\frac{\rho^2(1-\kappa^2)(1-\rho^2)\Big(-4\rho(u_1)_{\rho\rho}+(1-\rho^2)(u_1)_{\rho\rho\rho}\Big)(\bar{u}_2)_{\rho}}{1+(\kappa^2-1)\rho^2}d\rho\\
%&\quad +Re\int_0^1\frac{2\rho^3(1-\kappa^2)^2(1-\rho^2)^2(u_1)_{\rho\rho}(\bar{u}_2)_{\rho}}{(1+(1-\kappa)^2\rho^2)^2}d\rho\\
%&\quad+Re\int_0^1\frac{(1-\kappa^2)(1-\rho^2)\rho^{-1}(u_1)_{\rho}\bar{u}_2+(\rho^{-1}a(\rho)+b(\rho))(u_1)_{\rho}\bar{u}_2}{1+(\kappa^2-1)\rho^2}-u_1\bar{u}_2d\rho\\
%\endaligned
%$$

Note that $u=(u_1,u_2)\in\mathbb{D}(\mathcal{A}_0)$. On one hand, direct computation shows that
\bel{E3-35R1}
\aligned
Re\int_0^1\frac{(1-\kappa^2)(1-\rho^2)^2(u_1)_{\rho\rho}\bar{u}_2}{1+(\kappa^2-1)\rho^2}d\rho&=-Re\int_0^1\frac{(1-\kappa^2)(1-\rho^2)^2(u_1)_{\rho}(\bar{u}_2)_{\rho}}{1+(\kappa^2-1)\rho^2}d\rho\\
&\quad-Re\int_0^1\frac{-4\rho(1-\kappa^2)(1-\rho^2)(u_1)_{\rho}\bar{u}_2}{1+(\kappa^2-1)\rho^2}d\rho\\
&\quad-Re\int_0^1\frac{2\rho(1-\kappa^2)^2(1-\rho^2)^2(u_1)_{\rho}\bar{u}_2}{(1+(\kappa^2-1)\rho^2)^2}d\rho.
\endaligned
\ee
By (\ref{E3-34R1}) and using the de l'H\^{o}pital's rule (\ref{E3-33}), it holds
\bel{E3-35R2}
\aligned
&Re\int_0^1\frac{(1-\kappa^2)(1-\rho^2)\rho^{-1}(u_1)_{\rho}\bar{u}_2+(\rho^{-1}a(\rho)+b(\rho))(u_1)_{\rho}\bar{u}_2}{1+(\kappa^2-1)\rho^2}d\rho\\
&\quad=Re\int_0^1\frac{(1-\kappa^2)(1-\rho^2)(u_1)_{\rho}(\bar{u}_2)_{\rho}+a(\rho)(u_1)_{\rho}(\bar{u}_2)_{\rho}}{1+(\kappa^2-1)\rho^2}d\rho+Re\int_0^1\frac{b(\rho)(u_1)_{\rho}\bar{u}_2}{1+(\kappa^2-1)\rho^2}d\rho\\
&\quad=Re\int_0^1\frac{(1-\kappa^2)(1-\rho^2)^2(u_1)_{\rho}(\bar{u}_2)_{\rho}}{1+(\kappa^2-1)\rho^2}d\rho-Re\int_0^1\rho^2(u_2)_{\rho}(u_1)_{\rho}d\rho\\
&\quad\quad+Re\int_0^1(-\frac{4\rho(1-\kappa^2)(1-\rho^2)}{1+(\kappa^2-1)\rho^2}+\frac{2\rho(1-\kappa^2)^2(1-\rho^2)^2}{(1+(\kappa^2-1)\rho^2)^2})\frac{(u_1)_{\rho}\bar{u}_2}{1+(\kappa^2-1)\rho^2}d\rho.
\endaligned
\ee
So by (\ref{E3-35R1})-(\ref{E3-35R2}), we have
\bel{E3-36Y2}
Re\int_0^1(\varpi_1(\rho)u_1)\bar{u}_2d\rho=-Re\int_0^1u_2\bar{u}_1d\rho-Re\int_0^1\rho^2(u_2)_{\rho}(u_1)_{\rho}d\rho.
\ee

On the other hand, note that $\kappa\sim1$ and $\kappa<1$, there exists a positive constant $\varsigma_0$ such that
\bel{E3-36Y3}
\aligned
&Re\int_0^1(\varpi_2(\rho)u_2)\bar{u}_2d\rho\\
&\quad=Re\int_0^1\frac{-[4\kappa^2-1+(\kappa^2-1)^2\rho^2]u_2\bar{u}_2-2(1-\kappa^2)\rho(1-\rho^2)(u_2)_{\rho}\bar{u}_2}{1+(\kappa^2-1)\rho^2}d\rho\\
&\quad=Re\int_0^1\left(-\kappa^{-2}(4\kappa^2-1+(\kappa^2-1)^2\rho^2)+\frac{(1-\kappa^2)(1-3\rho^2)}{1+(\kappa^2-1)\rho^2}+\frac{2\rho^2(1-\kappa^2)^2(1-\rho^2)}{(1+(\kappa^2-1)\rho^2)^2}\right)u_2\bar{u}_2d\rho\\
&\quad\leq-2Re\int_0^1\kappa^{-2}(2\kappa^2-1)u_2\bar{u}_2d\rho\\
&\quad\leq-\varsigma_0\|u_2\|^2_{\mathbb{L}^2([0,1])}<0.
\endaligned
\ee
Furthermore, by (\ref{E3-36Y2}) and (\ref{E3-36Y3}), there exists a positive constant $\varsigma_1$ such that
\bel{E3-36Y4}
\aligned
Re\int_0^1(\varpi_1(\rho)u_1)\bar{u}_2d\rho&+Re\int_0^1(\varpi_2(\rho)u_2)\bar{u}_2d\rho\\
&\leq-Re\int_0^1u_2\bar{u}_1d\rho-Re\int_0^1\rho^2(u_2)_{\rho}(u_1)_{\rho}d\rho-\varsigma_1\|u_2\|^2_{\mathbb{L}^2([0,1])}\\
&\leq-Re\int_0^1u_2\bar{u}_1d\rho-Re\int_0^1\rho^2(u_2)_{\rho}(u_1)_{\rho}d\rho.
\endaligned
\ee

Similarly, by direct computations, it holds
\bel{E3-36Y5}
Re\int_0^1\rho^2(\varpi_1(\rho)u_1)_{\rho}(\bar{u}_2)_{\rho}d\rho+Re\int_0^1\rho^2(\varpi_2(\rho)u_2)_{\rho}(\bar{u}_2)_{\rho}d\rho\leq-Re\int_0^1\rho^4(u_2)_{\rho\rho}(\bar{u}_1)_{\rho\rho}d\rho.
\ee

Hence, it follows from (\ref{E3-36}), (\ref{E3-36Y4}) and (\ref{E3-36Y5}) that
$$
Re(\mathcal{A}_0u|u)_{\mathcal{H}}\leq 0.
$$
This completes the proof.
\end{proof}

\begin{lemma}
The operator $\mathcal{A}_0$ defined in (\ref{E3-32R2}) is invertible in $\mathcal{H}$. Moreover, the operator $\mathcal{A}_0$ generates a $\mathbb{C}_0$-semigroup $(\textbf{S}_0(\tau))_{\tau\geq0}$ in $\mathcal{H}$.
\end{lemma}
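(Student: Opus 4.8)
The plan is to establish the two assertions of Lemma 3.4 — invertibility of $\mathcal{A}_0$ and generation of a $\mathbb{C}_0$-semigroup — by combining the dissipativity already proven in Lemma 3.3 with a range condition, and then invoking the Lumer--Phillips theorem. First I would note that $\mathcal{A}_0$ is closed and densely defined in $\mathcal{H}$ (Lemma 3.3), and dissipative (Lemma 3.3), so by the Lumer--Phillips theorem it suffices to show that $\lambda - \mathcal{A}_0$ has dense range (equivalently, is surjective) for some $\lambda > 0$; together with dissipativity this yields that $\mathcal{A}_0$ generates a contraction semigroup $(\mathbf{S}_0(\tau))_{\tau\ge 0}$. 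For the invertibility claim I would separately solve $\mathcal{A}_0 u = f$, i.e. the case $\lambda = 0$, which in fact can be folded into the same resolvent analysis since the computation below shows the relevant second-order ODE is non-degenerate even at $\lambda = 0$ thanks to the damping structure $4\kappa^2 - 1 > 0$.

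The heart of the argument is the range condition: given $f = (f_1, f_2) \in \mathcal{H}$, solve $(\lambda - \mathcal{A}_0) u = f$ for $u = (u_1, u_2) \in \mathbb{D}(\mathcal{A}_0)$. Writing this out, the first component gives $\lambda u_1 - u_2 = f_1$, so $u_2 = \lambda u_1 - f_1$, and substituting into the second component $\lambda u_2 - \varpi_1(\rho) u_1 - \varpi_2(\rho) u_2 = f_2$ reduces everything to a single linear second-order ODE for $u_1$ on $(0,1)$ of the form
\begin{equation*}
(1-\kappa^2)(1-\rho^2)^2 u_1'' + p(\rho) u_1' + q_\lambda(\rho) u_1 = F_\lambda(\rho),
\end{equation*}
where $p$ collects the first-order coefficients from $\varpi_1$ (including the $\rho^{-1}$ singular term and the $a(\rho),b(\rho)$ pieces) and $q_\lambda$ the zeroth-order terms, with $F_\lambda$ built from $f_1, f_2$ and their derivatives. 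I would then analyze this ODE on $[0,1]$: the point $\rho = 0$ is a regular singular point (because of the $\rho^{-1}$ in $p$) and $\rho = 1$ is the boundary of the lightcone where the coefficient $(1-\rho^2)^2$ of $u_1''$ degenerates, but the relevant domain here is really $\rho \in (0,\sigma]$ with $\sigma < 1$, so $\rho = 1$ causes no trouble. Using the Frobenius/indicial analysis at $\rho = 0$ one selects the regular solution branch (this is exactly the role of the condition $u_1''(0) = 0$ and the structure of $\mathbb{D}(\mathcal{A}_0)$), and standard ODE theory gives existence and uniqueness of a solution $u_1 \in \mathbb{C}^2$ with the required behavior; one then checks $u_1 \in \mathbb{H}^2_\rho(\mathbb{B}^3_R)$ and $u_2 = \lambda u_1 - f_1 \in \mathbb{H}^1_\rho(\mathbb{B}^3_R)$, i.e. $u \in \mathbb{D}(\mathcal{A}_0)$, using the weighted-norm equivalences recorded in the excerpt. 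For $\lambda = 0$ the same reduction shows $\mathcal{A}_0 u = f$ is uniquely solvable, giving boundedness of $\mathcal{A}_0^{-1}$ via the closed graph theorem (or directly via an a priori estimate), hence invertibility.

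The main obstacle is precisely the regular-singular point at $\rho = 0$: one must verify that the ODE for $u_1$ admits a solution that is smooth (indeed even, in the sense of the paper's radial Sobolev spaces with odd derivatives vanishing at $0$) rather than one with a logarithmic or $\rho^{-1}$-type singularity, and that this solution lies in the correct weighted Sobolev space. The indicial equation at $\rho = 0$ will have roots $0$ and $1$ (or similar), and one must argue the resonance does not produce a logarithm — here the even-parity structure of $\mathbb{D}(\mathcal{A}_0)$ and the de l'H\^opital computation (\ref{E3-33}) are the key inputs, exactly as in the treatment following \cite{D2}. A secondary technical point is tracking that $F_\lambda$, which contains $f_1'$ and $f_2$, produces enough regularity: since $f_1 \in \mathbb{H}^2_\rho$ and $f_2 \in \mathbb{H}^1_\rho$, elliptic regularity for the ODE gives $u_1 \in \mathbb{H}^2_\rho$ with one order gained, matching the domain. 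Once surjectivity of $\lambda - \mathcal{A}_0$ is in hand, the conclusion is immediate from Lumer--Phillips, and the contraction estimate $\|\mathbf{S}_0(\tau)\| \le 1$ follows as well, which will be useful in the subsequent semigroup perturbation argument adding $\mathcal{A}_1$.
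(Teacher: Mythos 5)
Your proposal takes essentially the same route as the paper: reduce the surjectivity question to a scalar second-order ODE for $u_1$ by eliminating $u_2$ from $\mathcal{A}_0 u = f$ (or $(\lambda - \mathcal{A}_0)u = f$), solve that ODE, and then invoke Lumer--Phillips together with the dissipativity established in the preceding lemma. The paper organizes this slightly differently — it proves injectivity and surjectivity of $\mathcal{A}_0$ itself (so $0 \in \rho(\mathcal{A}_0)$), writing down a variation-of-parameters kernel $K(\rho,s)$ explicitly, and leaves implicit the step that $0$ in the resolvent set plus closedness gives an open neighbourhood of $0$ in $\rho(\mathcal{A}_0)$, hence the range condition at some $\lambda>0$ needed for Lumer--Phillips. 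Your version works directly with $\lambda - \mathcal{A}_0$ for $\lambda > 0$, which is the cleaner phrasing of the range condition, and you pay more attention to the regular singular point at $\rho=0$ (indicial roots, exclusion of the log branch, the role of $u_1''(0)=0$ and the even-parity structure of $\mathbb{D}(\mathcal{A}_0)$) — a point the paper does not discuss and where your caution is warranted.

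Two small corrections. First, for this lemma the functions live on $(0,1)$ (the norms of $\mathcal{H}$ are all $\mathbb{L}^2(0,1)$-based, and the paper integrates up to $1$ in both the injectivity argument and the kernel $K$), so you cannot appeal to the restriction $\rho\in(0,\sigma]$ to avoid the degeneracy of the coefficient $(1-\kappa^2)(1-\rho^2)^2$ at $\rho=1$; that restriction enters only in the nonlinear analysis of later sections. The degeneracy at $\rho=1$ is a genuine feature of the ODE here and would need the same sort of care as $\rho=0$ (the paper glosses over it as well, but invoking $\sigma<1$ is not a valid fix at this stage). Second, a minor bookkeeping slip: from the first component of $\mathcal{A}_0 u$, namely $[\mathcal{A}_0 u]_1 = u_2$, the equation $(\lambda-\mathcal{A}_0)u = f$ gives $\lambda u_1 - u_2 = f_1$ with $f_1$ the \emph{first} component of $f$, not the second — your algebra is right, just make sure the indexing matches the paper's $(f_0,f_1)$ convention when you fold in the $\lambda=0$ case.
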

\begin{proof}
In order to show the existence of $\mathcal{A}_0^{-1}$, we need to prove the operator $\mathcal{A}_0$ are injective and surjective.
We first show $\mathcal{A}_0$ is injective. Let $u=(u_1,u_2)\in\mathbb{D}(\mathcal{A}_0)$ such that $\mathcal{A}_0u=0$. Then $u_2=0$ and
$$
(1-\kappa^2)(1-\rho^2)^2(u_1)_{\rho\rho}+(1-\kappa^2)(1-\rho^2)\rho^{-1}(u_1)_{\rho}=0.
$$

Since $(1-\kappa^2)(1-\rho^2)^2\neq0$, we have
$$
(u_1)_{\rho\rho}+\frac{(1-\kappa^2)(1-\rho^2)}{\rho(1-\kappa^2)(1-\rho^2)^2}(u_1)_{\rho}=0.
$$

Obviously, there is only spatial derivative. Multiplying above equation by $(u_1)_{\rho}$, and then integrating by parts on $[0,1]$, we get
$$
(u_1)_{\rho}^2(\tau,1)+\int_0^1\frac{2(1-\kappa^2)(1-\rho^2)}{\rho(1-\kappa^2)(1-\rho^2)^2}(u_1)_{\rho}^2d\rho=0.
$$
Since $(u_1)_{\rho}^2(\tau,1)\geq0$ and $\frac{(1-\kappa^2)(1-\rho^2)}{\rho(1-\kappa^2)(1-\rho^2)^2}>0$, it holds
$$
(u_1)_{\rho}^2\equiv0,~~(u_1)_{\rho}^2(\tau,1)=0~~and~~ u_1(\tau,1)=0.
$$
Thus we get $u_1\equiv0$. This combining with $u_2=0$ gives that $\mathcal{A}_0$ is injective.

In what follows, we show the operator $\mathcal{A}_0$ is surjective. $\forall \textbf{f}=(f_0,f_1)\in\mathbb{C}_e^{\infty}[0,1]^2$, $\mathcal{A}_0u=\textbf{f}$ implies that
$$
u_2=f_1,
$$
and
$$
\aligned
(1-\kappa^2)(1-\rho^2)^2(u_2)_{\rho\rho}&+(1-\kappa^2)(1-\rho^2)\rho^{-1}(u_2)_{\rho}\\
&=[4\kappa^2-1+(\kappa^2-1)^2\rho^2]f_1\\
&+2(1-\kappa^2)\rho(1-\rho^2)f_1'+(1+(\kappa^2-1)\rho^2)f_0,
\endaligned
$$
which can be seen as an ODE with singular term, so a routine calculation (e.g. see \cite{Lia}) shows that it has an unique solution
$$
u_1=\int_0^{\rho}K(\rho,s)\bar{f}(s)ds,
$$
where
$$
\aligned
K(\rho,s)&=s^{-1}e^{-\int_{s}^1\frac{(1-\kappa^2)(1-s_0^2)}{s_0^{2}(1-\kappa^2)(1-s_0^2)^2}ds_0}\int_s^{\rho}e^{\int_{s_1}^1\frac{(1-\kappa^2)(1-s_0^2)}{s_0^{2}(1-\kappa^2)(1-s_0^2)^2}ds_0}ds_1,\\
\bar{f}(s)&=\frac{4\kappa^2-1+(\kappa^2-1)^2s^2}{(1-\kappa^2)(1-s^2)^2}f_1+\frac{2(1-\kappa^2)s(1-s^2)}{(1-\kappa^2)(1-s^2)^2}f_1'+\frac{1+(\kappa^2-1)s}{(1-\kappa^2)(1-s^2)^2}f_0.
\endaligned
$$

Up until now, we have found $u=(u_1,u_2)\in\mathbb{D}(\mathcal{A}_0)$, which satisfies $\mathcal{A}_0u=\textbf{f}$. Thus the existence of $\mathcal{A}^{-1}$ has been shown.
Furthermore, the Lumer-Phillips Theorem \cite{Pa} gives that $\mathcal{A}_0$ generates a $\mathbb{C}_0$-semigroup in $\mathcal{H}$.
\end{proof}

\begin{lemma}
The operator $\mathcal{A}_1:\mathcal{H}\rightarrow\mathcal{H}$ defined in (\ref{E3-34}) is compact.
\end{lemma}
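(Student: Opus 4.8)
The plan is to show that $\mathcal{A}_1$ maps the Hilbert space $\mathcal{H}=\mathbb{H}_{\rho}^2(\mathbb{B}_R^3)\times\mathbb{H}_{\rho}^1(\mathbb{B}_R^3)$ compactly into itself by exploiting a gain of one derivative. Looking at the explicit form of $\mathcal{A}_1$ in (\ref{E3-34}), the first component of $\mathcal{A}_1 u$ is identically zero, so only the second component
$$
[\mathcal{A}_1u]_2=\Big(\tfrac{4\kappa^2}{1+(\kappa^2-1)\rho}+1\Big)u_1-\big(\rho^{-1}a(\rho)+b(\rho)\big)(u_1)_{\rho}
$$
matters, and it depends only on $u_1\in\mathbb{H}_{\rho}^2(\mathbb{B}_R^3)$. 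First I would check that the coefficients $\rho^{-1}a(\rho)$, $b(\rho)$ and $\tfrac{4\kappa^2}{1+(\kappa^2-1)\rho}+1$ are smooth and bounded on $(0,1]$: from (\ref{E3-34R1}) one has $a(\rho)=O(\rho^2)$ near $\rho=0$, so $\rho^{-1}a(\rho)=O(\rho)$ is in fact bounded (indeed vanishing) at the origin, and $b(\rho)=O(\rho)$; the denominators $1+(\kappa^2-1)\rho$ and $1+(\kappa^2-1)\rho^2$ are bounded below by a positive constant since $\kappa\sim 1$, $\kappa<1$ (they lie in $[\kappa^2,1]$ as used in Lemma 3.1). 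Hence $\mathcal{A}_1$ is at worst a bounded first-order differential operator acting on $u_1$, with coefficients in $\mathbb{C}^\infty[0,1]$ after removing the apparent singularity.

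The key step is then the compact embedding. Writing $\mathcal{A}_1 u = (0, g)$ with $g = c_0(\rho)u_1 + c_1(\rho)(u_1)_\rho$ and $c_0,c_1\in\mathbb{C}^\infty[0,1]$, the map $u_1\mapsto g$ takes $\mathbb{H}_{\rho}^2(\mathbb{B}_R^3)$ boundedly into $\mathbb{H}_{\rho}^2(\mathbb{B}_R^3)$ (differentiating $g$ once costs one derivative of $u_1$, and $u_1$ has two), hence into $\mathbb{H}_{\rho}^1(\mathbb{B}_R^3)$ with one derivative to spare. Since $\mathbb{B}_R^3$ is a bounded domain with smooth boundary, the Rellich–Kondrachov theorem gives that the inclusion $\mathbb{H}^2(\mathbb{B}_R^3)\hookrightarrow\mathbb{H}^1(\mathbb{B}_R^3)$ is compact; transporting this through the radial identification $\hat v(x)=v(|x|)$ (which is an isometry between $\mathbb{H}_{\rho}^n$ and the radial subspace of $\mathbb{H}^n(\mathbb{B}_R^3)$, a closed subspace, hence itself compactly embedded) yields that $\mathbb{H}_{\rho}^2(\mathbb{B}_R^3)\hookrightarrow\mathbb{H}_{\rho}^1(\mathbb{B}_R^3)$ is compact. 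Composing the bounded operator $u\mapsto(0,g)\in\{0\}\times\mathbb{H}_{\rho}^2(\mathbb{B}_R^3)$ with this compact embedding into $\{0\}\times\mathbb{H}_{\rho}^1(\mathbb{B}_R^3)\subset\mathcal{H}$ shows $\mathcal{A}_1:\mathcal{H}\to\mathcal{H}$ is compact (a bounded operator followed by a compact one is compact).

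The main obstacle is bookkeeping the behavior at $\rho=0$: one must verify that $g=c_0 u_1+c_1(u_1)_\rho$ genuinely lies in $\mathbb{H}_{\rho}^2$ (equivalently that $\hat g\in\mathbb{H}^2(\mathbb{B}_R^3)$) despite the factor $\rho^{-1}$ appearing formally in $\rho^{-1}a(\rho)$. This is where the identity $a(\rho)=O(\rho^2)$ together with the parity/regularity conditions built into $\mathbb{D}(\mathcal{A}_0)$ and $\mathbb{C}_e^{\infty}[0,1]$ — namely $u_1''(0)=0$ and the de l'Hôpital relation (\ref{E3-33}), $\lim_{\rho\to 0^+}(u_1)_\rho(\rho)/\rho=(u_1)_{\rho\rho}(0)$ — is used, exactly as in the proof of Lemma 3.1, to conclude that no singularity is actually produced. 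Once that is in hand, the argument reduces to the standard fact that a first-order operator with smooth coefficients on a bounded smooth domain is compact from $\mathbb{H}^2$ to $\mathbb{H}^1$, and the proof is complete.
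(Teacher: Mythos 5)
The central step in your proposal is the claim that $u_1\mapsto g=c_0(\rho)u_1+c_1(\rho)(u_1)_\rho$ is a bounded map from $\mathbb{H}^2_\rho(\mathbb{B}^3_R)$ \emph{into} $\mathbb{H}^2_\rho(\mathbb{B}^3_R)$, "with one derivative to spare" before landing in $\mathbb{H}^1_\rho$. This is false. Since $g$ already contains one derivative of $u_1$, requiring $\partial^2_\rho g\in\mathbb{L}^2$ would require $\partial^3_\rho u_1\in\mathbb{L}^2$, i.e.\ $u_1\in\mathbb{H}^3_\rho$, which is not available: with $u_1\in\mathbb{H}^2_\rho$ one only gets $g\in\mathbb{H}^1_\rho$. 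Thus $\mathcal{A}_1$ is a genuine first-order differential operator from $\mathcal{H}$ to $\mathcal{H}$, and a first-order operator with smooth, not identically vanishing coefficients is \emph{not} compact from $\mathbb{H}^2$ into $\mathbb{H}^1$, even on a smooth bounded domain. Concretely, $u_n(\rho)=n^{-2}\cos(n\rho)$ satisfies $\|u_n\|_{\mathbb{H}^2_\rho}\simeq 1$, while $c_1\,(u_n)_\rho=-c_1\,n^{-1}\sin(n\rho)$ has $\mathbb{H}^1_\rho$ norm of order one and no convergent subsequence there. Once the "one derivative to spare" is removed, the composition with the Rellich--Kondrachov embedding no longer produces compactness for the $c_1\partial_\rho$ piece, and the argument collapses. (Your factoring does work for the multiplication term $u_1\mapsto c_0 u_1$: compose the compact inclusion $\mathbb{H}^2_\rho\hookrightarrow\mathbb{H}^1_\rho$ with bounded multiplication in $\mathbb{H}^1_\rho$. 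It is precisely the first-order part that resists this treatment, and that is the heart of the lemma.) The preliminary observations about the coefficients — that $\rho^{-1}a(\rho)$ and $b(\rho)$ are bounded and vanish at $\rho=0$, and that the denominators are bounded below since $\kappa\sim1$, $\kappa<1$ — are fine, but they address regularity of the coefficients, not the order of $\mathcal{A}_1$.

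For comparison, the paper's own proof of this lemma does not try to land $g$ in $\mathbb{H}^2_\rho$. It notes that $a(\rho)$, $b(\rho)$ are bounded on $(0,1)$, concludes that $\{\mathcal{A}_1u_l\}$ is bounded in $\mathcal{H}$ when $\{u_l\}$ is, and then asserts the Lipschitz bound $\|\mathcal{A}_1u_l-\mathcal{A}_1u_{l'}\|_{\mathcal{H}}\lesssim\|(u_1)_l-(u_1)_{l'}\|_{\mathbb{H}^2(0,1)}$ and claims this yields a Cauchy subsequence. This is a different route from yours, and it too leaves the key mechanism implicit — a Lipschitz estimate in the \emph{same-strength} norm does not by itself produce a convergent subsequence; one would need the difference estimated by a strictly weaker norm of $(u_1)_l$ in which boundedness in $\mathbb{H}^2_\rho$ gives strong subsequential convergence. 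In any case, the specific boundedness-into-$\mathbb{H}^2_\rho$ claim in your write-up is a concrete and fatal gap that must be repaired before the rest of the argument can stand.
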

\begin{proof}
Let $\{u_l\}_{l\in\mathbb{N}}=\{((u_1)_l,(u_2)_l)\}_{l\in\mathbb{N}}$ be a sequence that is uniformly bounded in $\mathcal{H}$. By (\ref{E3-34}), it has
$$
\aligned
\mathcal{A}_1u_l&=\left(
\begin{array}{cccc}
0&0\\
\frac{4\kappa^2}{1+(\kappa^2-1)\rho}+1-(\rho^{-1}a(\rho)+b(\rho))\partial_{\rho}&0
\end{array}
\right)u_l\\
&=(0,\frac{4\kappa^2(u_1)_l}{1+(\kappa^2-1)\rho}+(u_1)_l-(\rho^{-1}a(\rho)+b(\rho))\partial_{\rho}(u_1)_l)^T,
\endaligned
$$
where $a(\rho)$ and $b(\rho)$ are bounded in $(0,1)$ by (\ref{E3-34R1}).
This implies that $\mathcal{A}_1u_l$ is also bounded in $\mathcal{H}$. Moreover, for any two uniformly bounded sequences
$\{u_l\}_{l\in\mathbb{N}}=\{((u_1)_l,(u_2)_l)\}_{l\in\mathbb{N}}$ and $\{u_{l'}\}_{l'\in\mathbb{N}}=\{((u_1)_{l'},(u_2)_{l'})\}_{l'\in\mathbb{N}}$ in $\mathcal{H}$, it holds
$$
\|\mathcal{A}_1u_l-\mathcal{A}_1u_{l'}\|_{\mathcal{H}}\lesssim\|(u_1)_l-(u_1)_{l'}\|_{\mathbb{H}^2(0,1)},
$$
which implies that the sequence $\{\mathcal{A}_1u_l\}_{l\in\mathbb{N}}$ contains Cauchy sequence. This completes the proof.
\end{proof}

By (\ref{E3-35}), Lemma 3.1-3.4 and the bounded perturbation theorem (Theorem 1.3 in p.158 of \cite{Kl}), we can conclude our main result in this subsection.
\begin{proposition}
The operator $\mathcal{A}$ defined in (\ref{E3-32R1}) generates a $\mathbb{C}_0$-semigroup $(\textbf{S}(\tau))_{\tau\geq0}$ in $\mathcal{H}$.
Moreover, the Cauchy problem
\begin{eqnarray*}
&&\frac{d}{d\tau}\psi(\tau)=\mathcal{A}\psi(\tau),~~\tau>0,\\
&&\psi(0)=\psi_0,~~\psi_0\in\mathbb{D}(\mathcal{A}_0),
\end{eqnarray*}
admits a unique solution
\begin{equation*}
\psi(\tau)=\textbf{S}(\tau)\psi_0,~~\forall \tau\geq0,
\end{equation*}
where the initial data $\psi_0=(v_0^T(\rho),v_1^T(\rho))$ is given in (\ref{E2-8R1})-(\ref{E2-8R2}).
\end{proposition}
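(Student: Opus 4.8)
The plan is to assemble the result from the lemmas already proved, using standard $C_0$-semigroup perturbation theory. First I would invoke Lemma 3.3, which says that $\mathcal{A}_0$ is closed, densely defined and dissipative (Lemma 3.2) and invertible (Lemma 3.3), so that by the Lumer--Phillips theorem $\mathcal{A}_0$ generates a $C_0$-semigroup $(\textbf{S}_0(\tau))_{\tau\geq0}$ of contractions on $\mathcal{H}$. Then I would bring in Lemma 3.4, which states that $\mathcal{A}_1:\mathcal{H}\rightarrow\mathcal{H}$ is compact; in particular $\mathcal{A}_1$ is a bounded linear operator on $\mathcal{H}$. Since $\mathcal{A}=\mathcal{A}_0+\mathcal{A}_1$ by (\ref{E3-35}), and the sum of a $C_0$-semigroup generator with a bounded operator is again a $C_0$-semigroup generator, the bounded perturbation theorem (Theorem 1.3, p.158 of \cite{Kl}) yields that $\mathcal{A}$, with domain $\mathbb{D}(\mathcal{A})=\mathbb{D}(\mathcal{A}_0)$, generates a $C_0$-semigroup $(\textbf{S}(\tau))_{\tau\geq0}$ on $\mathcal{H}$.

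Once generation is established, the Cauchy problem statement is the classical well-posedness consequence of semigroup theory: for initial data $\psi_0\in\mathbb{D}(\mathcal{A})=\mathbb{D}(\mathcal{A}_0)$, the function $\psi(\tau):=\textbf{S}(\tau)\psi_0$ is the unique classical solution of $\frac{d}{d\tau}\psi=\mathcal{A}\psi$, $\psi(0)=\psi_0$, belonging to $C^1([0,\infty);\mathcal{H})\cap C([0,\infty);\mathbb{D}(\mathcal{A}))$. Uniqueness follows because any solution $\tilde\psi$ of the Cauchy problem must satisfy $\frac{d}{ds}[\textbf{S}(\tau-s)\tilde\psi(s)]=0$ on $[0,\tau]$, forcing $\tilde\psi(\tau)=\textbf{S}(\tau)\psi_0$. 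Finally, I would identify $\psi_0=(v_0^T(\rho),v_1^T(\rho))$ with the data defined in (\ref{E2-8R1})--(\ref{E2-8R2}), recalling from (\ref{E2-8R1})--(\ref{E2-8R2}) and the membership $v\in\mathbb{D}(\mathcal{A}_0)$ that these belong to the domain, so the abstract result applies directly to the linear problem (\ref{E2-7}).

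The only genuine points requiring care, and hence the main obstacle, are verifying that the hypotheses of the bounded perturbation theorem are literally met: that $\mathcal{A}_1$ is \emph{everywhere-defined and bounded} on $\mathcal{H}$ (compactness in Lemma 3.4 gives boundedness, but one should note that $a(\rho),b(\rho)$ being bounded on $(0,1)$ together with the estimate $\|\mathcal{A}_1u_l-\mathcal{A}_1u_{l'}\|_{\mathcal{H}}\lesssim\|(u_1)_l-(u_1)_{l'}\|_{\mathbb{H}^2(0,1)}$ from the proof of Lemma 3.4 indeed gives $\|\mathcal{A}_1u\|_{\mathcal{H}}\lesssim\|u\|_{\mathcal{H}}$), and that the perturbed domain is unchanged, $\mathbb{D}(\mathcal{A})=\mathbb{D}(\mathcal{A}_0)$, which is immediate since $\mathcal{A}_1$ is bounded. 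Everything else is a direct citation of the preceding lemmas and of standard semigroup theory, so the proof is short: it is essentially a bookkeeping argument linking Lemmas 3.1--3.4 through (\ref{E3-35}) to the bounded perturbation theorem.
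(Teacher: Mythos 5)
Your proposal is correct and matches the paper's own argument essentially verbatim: the paper's proof consists of the single remark preceding the proposition, namely that decomposition (\ref{E3-35}), Lemmas 3.1--3.4, and the bounded perturbation theorem of \cite{Kl} together give the result. Your more detailed write-up simply spells out the same chain (Lumer--Phillips for $\mathcal{A}_0$, compactness hence boundedness of $\mathcal{A}_1$, bounded perturbation, then standard semigroup well-posedness for the Cauchy problem), so there is no substantive difference in route.
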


%If we get rid of the singular point $\rho=0$, i.e. $\rho\in[\tilde{\delta},1-\delta]$ with $\tilde{\delta},\delta>0$ nearby $0$, following the same proof process of Proposition 3.5, we can get the following result.
%\begin{proposition}
%Let $T>0$, $\kappa\in(\frac{\sqrt{7}-1}{3},1)$ and $\tilde{\delta},\delta>0$ nearby $0$. For any $\rho\in[\tilde{\delta},1-\delta]$, the operator $\mathcal{A}$ defined in (\ref{E3-32R1}) generates a $\mathbb{C}_0$-semigroup $(\textbf{S}(\tau))_{\tau\geq0}$ in $\textbf{H}^2([\tilde{\delta},1-\delta])\times\textbf{H}^1([\tilde{\delta},1-\delta])$.
%Moreover, the Cauchy problem
%\begin{eqnarray*}
%&&\frac{d}{d\tau}\psi(\tau)=\mathcal{A}\psi(\tau),~~\tau\geq0,\\
%&&\psi(0)=\psi_0,~~\psi_0\in\mathbb{D}(\mathcal{A}_0),
%\end{eqnarray*}
%has a unique solution
%\begin{equation*}
%\psi(\tau)=\textbf{S}(\tau)\psi_0,~~\forall \tau\geq0,
%\end{equation*}
%where the initial data $\psi_0=(v_0^T(\rho),v_1^T(\rho))$ is given in (\ref{E2-8}), and $\textbf{H}^i([\tilde{\delta},1-\delta])$ $(i=1,2)$ denotes the usual Sobolev space.
%\end{proposition}

\subsection{The spectrum of linearized operator at the initial approximation step}

We now carry out the analysis of spectrum for the linear operator in equation (\ref{E2-7}). Assume that constant $\kappa\sim1$ and $\kappa<1$.
Let
$$v(\tau,\rho)=e^{\nu\tau}v_{\nu},$$ 
then equation (\ref{E2-7}) is reduced into a singular ODE
\begin{equation}\label{E3-4}
\begin{array}{lll}
(1-\kappa^2)(1-\rho^2)^2v''_{\nu}&+&\Big[\rho^{-1}(1-\kappa^2)(1-\rho^2)-2\nu(1-\kappa^2)\rho(1-\rho^2)\Big]v'_{\nu}\\
&&-\Big[\nu^2\Big(1+(\kappa^2-1)\rho^2\Big)+\nu\Big(4\kappa^2-1+(\kappa-1)^2\rho^2\Big)-4\kappa^2\Big]v_{\nu}=0,
\end{array}
\end{equation}
where $\nu$ denotes the spectrum of (\ref{E2-7}).

By the Definition 2.1, we should prove that ODE (\ref{E3-4}) has an analytic solution only if $Re~\nu<0$. Obviously, (\ref{E3-4}) can be rewritten as follows
$$
\aligned
v''_{\nu}+\rho^{-1}(1-\rho^2)^{-1}(1-2\rho^2\nu)v'_{\nu}-&(1-\kappa^2)^{-1}(1-\rho^2)^{-2}\Big[\nu^2\Big(1+(\kappa^2-1)\rho^2\Big)\\
&+\nu\Big(4\kappa^2-1+(\kappa-1)^2\rho^2\Big)-4\kappa^2\Big]v_{\nu}=0,
\endaligned
$$
which means that there are two singular points at $\rho=0$ and $\rho=1$. Costin-Donninger-Glogic-Huang \cite{Cos2} proved
the mode stabiliy of self-similar solutions for an energy-supercritical Yang-Mills equation by means of 
quasi-solution expansion method, wihch is a different approach with the method in \cite{Cos0}. After that, the mode stable of Bizo\'{n}-Biernat solution (an explicit self-similar of higher dimensional wave map \cite{BB}) for wave maps in higher dimension ($n\geq4$) was proved in \cite{Cos3}. In their approach, the coefficient of quasi-solution satisfied a recurrence relation, which leads to a difference equation. By some transformations, the characteristic equation of a new difference equation has two completely different eigenvalues which is the key point to apply a theorem of Poincar\'{e} (see, for example, \cite{Bus} or \cite{Ela}). But in our case, since the characteristic equation of difference equation has double roots, the theorem of Poincar\'{e} can't be used. 

Frobenius method \cite{Ger} is a powerful method to deal with the existence of analytic solution to singular ODE. The method of Frobenius tells us that we can find a power series solution at $\rho=0$ of the form
\begin{equation}\label{New1-1}
v_{\nu}=\sum_{n=0}^{\infty}a_n\rho^{n+n_0},~~a_0=1,
\end{equation}
where $n_0$ satisfies the indicial polynomial which is the coefficient of the lowest power of $\rho$ in the infinite series, $a_n$ is the coefficients of $v_{\nu}$ which depends on $\nu$ and $\kappa$.

Inserting this series solution (\ref{New1-1}) into (\ref{E3-4}), we get
%\bel{E3-5}
%\aligned
%&&(1-\kappa^2)\sum_{n=0}^{\infty}(n+n_0)(n+n_0-1)a_{n}(\rho^{n+n_0-1}+\rho^{n+n_0+3}-2\rho^{n+n_0+1})\nonumber\\
%&&+(1-\kappa^2)\sum_{n=0}^{\infty}(n+n_0)a_n(\rho^{n+n_0-1}-(1+2\nu)\rho^{n+n_0+1}+2\nu\rho^{n+n_0+3})\nonumber\\
%&&+4\kappa\sum_{n=0}^{\infty}a_n\rho^{n+n_0+1}-\nu^2\sum_{n=0}^{\infty}a_n(\rho^{n+n_0+1}+(\kappa^2-1)\rho^{n+n_0+3})\nonumber\\
%&&-\nu\sum_{n=0}^{\infty}a_n((2\kappa^2+2\kappa-1)\rho^{n+n_0+1}+(\kappa-1)^2\rho^{n+n_0+3})\nonumber\\
%(1-\kappa^2)\sum_{n=0}^{\infty}(n+n_0)^2a_n\rho^{n+n_0-1}&+\sum_{n=0}^{\infty}[-(1-\kappa^2)(n+n_0)(2n+2n_0+2\nu-1)+4\kappa-\nu^2-\nu(2\kappa^2+2\kappa-1)]a_n\rho^{n+n_0+1}\\
%&+\sum_{n=0}^{\infty}[(1-\kappa^2)(n+n_0)(n+n_0+2\nu-1)-\nu^2(\kappa^2-1)-\nu(1-\kappa)^2]a_n\rho^{n+n_0+3}=0.
%\endaligned
%\ee
the indicial polynomial as follows
\begin{equation*}
(1-\kappa^2)n_0^2=0,
\end{equation*}
which means that Frobenius indice $n_0=0$ (double). Furthermore, there is a recurrence relation of the coefficients $\{a_n\}_{n=0}^{\infty}$ as follows
\bel{E3-6}
a_{n+4}+(-2+p_1(n))a_{n+2}+(1+p_2(n))a_n=0,
\ee
where $a_0=1$ and
\bel{E3-15}
p_1(n)={(15+2\nu)n+3\nu^2+(4\kappa^2-{2(1-\kappa)\over 1+\kappa}+15)\nu+24-4\kappa^2\over n^2+(7+2\nu)n+\nu^2+(8-{1-\kappa\over 1+\kappa})\nu+12},
\ee
\bel{E3-16}
p_2(n)=-{(7+2\nu)n+\nu^2+(8-{1-\kappa\over 1+\kappa})\nu+12\over n^2+(7+2\nu)n+\nu^2+(8-{1-\kappa\over 1+\kappa})\nu+12}.
\ee

\begin{lemma}
The $4$th order difference equation (\ref{E3-6}) has $4$ linearly independent formal solutions. Moreover, those solutions are unbounded as $n\rightarrow+\infty$.
\end{lemma}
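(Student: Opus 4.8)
The plan is to analyze the fourth-order difference equation (\ref{E3-6}) by studying its characteristic structure as $n \to +\infty$. First I would examine the limiting behavior of the coefficients: as $n \to +\infty$, equations (\ref{E3-15}) and (\ref{E3-16}) give $p_1(n) \to 0$ and $p_2(n) \to 0$, so the equation (\ref{E3-6}) is asymptotically a constant-coefficient equation
$$
a_{n+4} - 2a_{n+2} + a_n = 0,
$$
whose characteristic polynomial is $\lambda^4 - 2\lambda^2 + 1 = (\lambda^2-1)^2 = (\lambda-1)^2(\lambda+1)^2$. This is precisely the double-root situation flagged in the introduction: the characteristic roots are $\lambda = 1$ (double) and $\lambda = -1$ (double), so the classical theorem of Poincar\'{e}--Perron does not apply directly, and one must pass to a finer asymptotic analysis. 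Since the equation is order four, the space of formal solutions is four-dimensional, which accounts for the ``$4$ linearly independent formal solutions'' in the statement.

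The key step is to construct the asymptotic expansions of the four formal solutions explicitly. I would substitute a formal ansatz of the form $a_n \sim \lambda^n n^{\theta}\big(1 + c_1 n^{-1} + c_2 n^{-2} + \cdots\big)$ (possibly with a correction $e^{c\sqrt{n}}$ or $n^{c\log n}$ type factor, which is exactly where a Newton polygon comes in — the double root typically forces a fractional-power correction in the exponent) and plug it into (\ref{E3-6}), expanding $p_1(n)$ and $p_2(n)$ in powers of $1/n$. Matching the leading orders near each double root $\lambda = \pm 1$ yields, after the Newton-polygon balancing referenced in the introduction, the admissible values of $\theta$ (and of any sub-exponential correction) together with a recursive determination of the $c_j$. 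This produces two formal solutions associated to $\lambda = 1$ and two associated to $\lambda = -1$; their linear independence follows from the distinctness of the $(\lambda,\theta)$-data (and, when $\theta$ coincides, from a $\log n$ factor distinguishing the second solution of each pair). Finally, to conclude unboundedness, I would read off the leading term of each of the four expansions: for $\lambda = \pm 1$ the modulus $|\lambda^n| = 1$, so boundedness would require the polynomial/sub-exponential factor to be bounded; but the Newton-polygon analysis shows each solution carries a genuinely growing factor (a positive power of $n$, or an $e^{c\sqrt n}$-type factor from the degenerate balance), hence $|a_n| \to +\infty$ along every formal solution, and therefore along every nontrivial combination as well.

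The main obstacle is the degenerate balancing at the double roots $\lambda = \pm 1$: because the naive ansatz $\lambda^n n^\theta$ gives a \emph{double} zero of the leading indicial relation rather than a simple one, the next-order terms in the $1/n$-expansion of $p_1, p_2$ enter the dominant balance, and one cannot simply read $\theta$ off a quadratic. One must carefully set up the Newton polygon for the transformed equation near each root to detect the correct fractional exponent scaling (this is the ``$e^{c\sqrt n}$'' phenomenon typical of such degeneracies) and then verify the formal series close consistently order by order. I would also need to double-check that the exact (not merely leading) coefficients in (\ref{E3-15})--(\ref{E3-16}), which depend on $\nu$ and $\kappa$, do not conspire to cancel the predicted growth — i.e., that the relevant subleading coefficient entering the Newton-polygon balance is nonzero for the parameter range $\kappa \sim 1$, $\kappa < 1$ under consideration.
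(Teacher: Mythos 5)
Your proposal takes a genuinely different route from the paper. The paper's own proof, in the Appendix, cites the Birkhoff--Trjitzinsky theorem (Proposition 5.1) to dispose of the existence of four linearly independent formal solutions in one stroke, and then argues unboundedness by rewriting (\ref{E3-6}) as a first-order $4\times 4$ system $z_{n+1}=(\mathcal{D}+\mathcal{T}(n))z_n$, bringing $\mathcal{D}$ to Jordan form, and then constructing $n$-dependent matrices $\mathcal{M}_1(n),\mathcal{M}_2(n)$ so that $\mathcal{M}_1(n)J\mathcal{M}_2(n)=\mathrm{diag}(1,\,1+\tfrac{1}{n},\,-1,\,-(1+\tfrac{1}{n}))$; the growth of the products $\prod(1+\tfrac{1}{n})$ is then read off from the conjugated system. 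You instead work directly with the scalar recursion, substituting a Frobenius-type ansatz $\lambda^n n^{\theta}(1+c_1 n^{-1}+\cdots)$ (with possible sub-exponential or $\log n$ corrections) and using a Newton polygon to resolve the degenerate indicial balance at the double roots $\lambda=\pm 1$. Both approaches are addressing the same obstruction (the Poincar\'{e}--Perron theorem is unavailable because of the double roots); yours is the scalar/formal-series counterpart of the paper's vector/matrix shearing. Your route is arguably more transparent about where growth comes from, and it parallels what the paper already does in Lemma 3.6, where a Newton polygon is used on the transformed recursion for $d_n=a_{n+1}/a_n$.

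One caution you raise yourself but should treat as the crux rather than a sanity check: the unboundedness claim is \emph{not} automatic from a degenerate balance. At $|\lambda|=1$, whether $a_n$ grows or decays is decided by the sign of the exponent (or the sign of $\mathrm{Re}\,c$ in a factor $e^{c\sqrt{n}}$) produced by the Newton polygon, and this depends on the specific subleading coefficients of $p_1(n),p_2(n)$, i.e.\ on $\nu$ and $\kappa$. You would need to carry the balance far enough to actually exhibit a positive growth exponent in the allowed parameter range $\kappa\sim 1$, $\kappa<1$; without that computation the proposal remains a plan rather than a proof. In fairness, the paper's own Appendix argument also leaves its final step terse (the perturbation $\tilde{\mathcal{T}}(n)$ there is only $O(1)$, not $o(1/n)$, so the product estimate is not immediate), so you would not be doing worse --- but you should not assume the growth is automatic.
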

The existence of solutions to difference equation (\ref{E3-6}) is directly obtained by a result of Birkhoff and Trjitzinsk \cite{Bir3}. The unbounded property of solutions will be proved in the appendix.

Set
\begin{equation*}
R_n=\frac{a_{n+2}}{a_{n}},
\end{equation*}
then by (\ref{E3-6}), it holds
\begin{equation*}
R_{n+2}=2+p_1(n)-(1+p_2(n))\frac{1}{R_n},
\end{equation*}
let $\lim_{n\rightarrow\infty}R_n=R$, then the characteristic equation is
\begin{equation*}
(R-1)^2=0.
\end{equation*}
Obviously, it has $R=1$ (double), so a theorem of Poincar\'{e} can not apply. This means that we can not follow the method in \cite{Cos1,Cos2,Cos3}.
Thus we have to return to solve difference equation (\ref{E3-6}).

Although there are unbounded solutions of difference equation (\ref{E3-6}), we want to know the asymptotic of unbounded solutions. Furthermore, we want to get the radius of convergence of $\sum_{n=0}^{\infty}a_n\rho^{n+n_0}$ with $n_0=0$ in (\ref{New1-1}).
A general procedure of finding Birkhoff-Trjitzinsk expansions is fairly complicated, but in most cases, a simplified procedure is sufficient. In what follows, we use the Newton polygon method to construct such expansion of solutions of (\ref{E3-6}). Newton polygons provide one technique for the study of the behaviour of the roots to a polynomial over a field.

Let $d_n=\frac{a_{n+1}}{a_n}$. Substituting it into (\ref{E3-6}), we get the relation
\begin{equation}\label{E3-21}
d_{n+3}d_{n+2}d_{n+1}d_n+(-2+p_1(n))d_{n+1}d_n+(1+p_2(n))=0.
\end{equation}
By (\ref{E3-15})-(\ref{E3-16}), (\ref{E3-21}) is equivalent to
\begin{equation}\label{E3-22}
p_3(n)d_{n+3}d_{n+2}d_{n+1}d_n+p_4(n)d_{n+1}d_n+p_5(n)=0,
\end{equation}
where
\begin{eqnarray}\label{E3-23}
&&p_3(n)=(n+4)(n+3+2\nu)+\nu^2-{1-\kappa\over 1+\kappa}\nu,\\
\label{E3-24}
&&p_4(n)=\nu^2+\nu(4\kappa^2-2n-1)-n(2n-1)-4\kappa^2,\\
\label{E3-25}
&&p_5(n)=n^2.
\end{eqnarray}

We introduce the concept of Newton polygon, which is taken from page 380 in \cite{Brie}.

\begin{definition}
Let $\Lambda:=\{(i,deg(a_i))|i=1,2,3,\ldots,m\}$ be the set of point in $\mathbb{R}^2$, and $deg(a_i)$ be the degree of polynomial $a_i$. For each point $p_0$ of $\Lambda$, there is a positive quadrant $p+\mathbb{R}^+\times\mathbb{R}^+$ moved up $p_0$. From the union of all these displaced quadrants, we construct the convex null $\cup_{p\in\Lambda}(p+\mathbb{R}^+\times\mathbb{R}^+)$.
Then the compact polygonal path (all the segments having negative slope) is called the Newton polygon of $\Lambda$.
\end{definition}

In our case, the $m$ in $\Lambda$ should be $3$ by noticing (\ref{E3-22}). Before showing the Newton polygon of $\Lambda$, we should make the transfomation
\begin{equation}\label{E3-30}
d_n=n^{-\bar{x}}\tilde{d}_n,
\end{equation}
where $\bar{x}$ will be determined in the construction of the Newton polygon. Then the difference equation (\ref{E3-22}) is changed into
\begin{equation}\label{E3-22R1}
p_3(n)[(n+3)(n+2)(n+1)n]^{-\bar{x}}\tilde{d}_{n+3}\tilde{d}_{n+2}\tilde{d}_{n+1}\tilde{d}_n+p_4(n)[(n+1)n]^{-\bar{x}}\tilde{d}_{n+1}\tilde{d}_n+p_5(n)=0.
\end{equation}

\begin{lemma}
The difference equation (\ref{E3-22R1}) with $\bar{x}=\frac{1}{4}$ has a solution $\tilde{d}_n=n^{\frac{1}{4}}$. Moreover, $\sigma_p(\mathcal{A})\subset\{\nu\in\mathbb{C}:Re~\nu<0\}$.
\end{lemma}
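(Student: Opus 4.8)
The plan is to first pin down the scaling exponent $\bar x$ in (3.30) through the Newton polygon of \cite{Brie} (Definition~3.2), verify the leading formal solution by direct substitution, and then convert the resulting asymptotics of the Frobenius coefficients $a_n$ into the spectral statement via Lemmas 3.2--3.4. For the first part, attach to (3.22) the set $\Lambda=\{(4,\deg p_3),(2,\deg p_4),(0,\deg p_5)\}$; by (3.23)--(3.25) it equals $\{(4,2),(2,2),(0,2)\}$, so the three points are collinear and the naive leading balance merely reproduces the double root $(R-1)^{2}=0$ found after (3.21). Passing to the transformed equation (3.22R1) and expanding $[(n+3)(n+2)(n+1)n]^{-\bar x}=n^{-4\bar x}(1+O(1/n))$ and $[(n+1)n]^{-\bar x}=n^{-2\bar x}(1+O(1/n))$, one checks that the first-order corrections — the subleading parts of $p_3,p_4$ and the shifts $(n+j)$ against $n$ — separate the two branches belonging to this double root precisely when $\bar x=\frac{1}{4}$, which is the exponent singled out by the Newton polygon of (3.22R1). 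Substituting $\bar x=\frac{1}{4}$ and $\tilde d_n=n^{1/4}$ into the leading part of (3.22R1), the factors $[(n+3)(n+2)(n+1)n]^{-1/4}$ and $[(n+1)n]^{-1/4}$ cancel the products $\tilde d_{n+3}\tilde d_{n+2}\tilde d_{n+1}\tilde d_n$ and $\tilde d_{n+1}\tilde d_n$, and after dividing by $n^{2}$ the equation collapses to $1-2+1=0$; hence $\tilde d_n=n^{1/4}$ is the leading term of the formal solution attached to this slope, so $d_n=a_{n+1}/a_n=n^{-1/4}\tilde d_n=1+o(1)$.

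Next I would turn this into information about $v_\nu$. Since $d_n\to1$, the Frobenius series $v_\nu(\rho)=\sum_n a_n\rho^{n}$ has radius of convergence exactly $1$, so $\rho=1$ is its only finite singularity. Refining the expansion $\tilde d_n=n^{1/4}\bigl(1+c_1(\nu,\kappa)n^{-\gamma}+\cdots\bigr)$ coming from (3.22R1), together with Lemma~3.5 and the growth rates established in the appendix, yields the precise asymptotics $a_n\simeq c\,n^{\theta(\nu,\kappa)}$, where $-\theta(\nu,\kappa)-1$ equals one of the two Frobenius exponents of (3.4) at $\rho=1$. By a transfer theorem this gives the boundary behaviour $v_\nu(\rho)\sim(1-\rho)^{-\theta(\nu,\kappa)-1}$ as $\rho\to1^{-}$; a radial function with such behaviour lies in $\mathbb{H}_{\rho}^{2}(\mathbb{B}^{3})$ only when the relevant exponent has sufficiently large real part, and the indicial equation of (3.4) at $\rho=1$ — whose coefficients depend on $\nu,\kappa$ and whose discriminant carries the factor $\nu^{2}+3\nu-4$ in the limit $\kappa\to1^{-}$ — shows that for $\mathrm{Re}\,\nu\ge0$ the regular-at-$0$ solution $v_\nu$ is too singular at $\rho=1$ to belong to $\mathbb{H}_{\rho}^{2}(\mathbb{B}^{3})$.

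For the spectral statement, note first that by Lemma~3.3 and the Lumer--Phillips theorem $\mathcal{A}_0$ generates a contraction semigroup, so $\sigma(\mathcal{A}_0)\subset\{\mathrm{Re}\,\nu\le0\}$, while by Lemma~3.4 $\mathcal{A}_1$ is compact, so the essential spectrum satisfies $\sigma_{\mathrm{ess}}(\mathcal{A})=\sigma_{\mathrm{ess}}(\mathcal{A}_0)\subset\{\mathrm{Re}\,\nu\le0\}$; in $\{\mathrm{Re}\,\nu>0\}$ the spectrum of $\mathcal{A}$ therefore consists of isolated eigenvalues of finite multiplicity only, and it suffices to exclude eigenvalues with $\mathrm{Re}\,\nu\ge0$. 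An eigenfunction for such a $\nu$ would solve (3.4), lie in $\mathcal{H}=\mathbb{H}_{\rho}^{2}(\mathbb{B}^{3})\times\mathbb{H}_{\rho}^{1}(\mathbb{B}^{3})$, and be regular at $\rho=0$ — the companion Frobenius solution carries a logarithm and is not in $\mathbb{H}_{\rho}^{2}$ — hence would coincide up to a constant with $v_\nu$, contradicting the previous paragraph. (Eigenvalues on $i\mathbb{R}$ can also be ruled out directly from the sharpened dissipativity estimate $\mathrm{Re}(\mathcal{A}_0u\,|\,u)_{\mathcal{H}}\le-\varsigma_1\|u_2\|_{\mathbb{L}^{2}}^{2}$ read off from the proof of Lemma~3.2 together with the lower-order structure of $\mathcal{A}_1$.) Therefore $\sigma_p(\mathcal{A})\subset\{\nu\in\mathbb{C}:\mathrm{Re}\,\nu<0\}$.

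The hard part is the difference equation in the degenerate regime $(R-1)^{2}=0$: the theorem of Poincar\'{e} and the quasi-solution method of \cite{Cos2,Cos3} do not apply, so one must extract the exact ramified Birkhoff--Trjitzinsky expansion of $a_n$ beyond its leading term $\tilde d_n=n^{1/4}$ — precisely the purpose of the Newton-polygon construction completed in the appendix. A further delicate point is making the passage ``coefficient asymptotics $\Longleftrightarrow$ singularity at $\rho=1$'' rigorous as a sharp $\mathbb{H}_{\rho}^{2}$ statement and, when one Frobenius exponent at $\rho=1$ would be admissible, showing that the regular-at-$0$ solution nevertheless excites the inadmissible branch whenever $\mathrm{Re}\,\nu\ge0$.
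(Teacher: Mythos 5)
Your Newton-polygon computation (collinear points, $\bar x=\tfrac14$, and the verification that $\tilde d_n=n^{1/4}$ solves the leading balance via $1-2+1=0$) matches the paper's Step 1 and the first half of Step 2, and the deduction $d_n=n^{-1/4}\tilde d_n\to 1$ is correct. But for the spectral statement $\sigma_p(\mathcal{A})\subset\{\mathrm{Re}\,\nu<0\}$ you take a genuinely different route. The paper does \emph{not} argue through the singularity at $\rho=1$; instead it carries the substitution $\tilde d_n=n^{1/4}$ all the way through $(3.22\mathrm{R}1)$, which forces the algebraic constraint $p_3(n)+p_4(n)+p_5(n)=0$, i.e.\ equation (3.28): $2\nu^2+\bigl(4\kappa^2-\tfrac{1-\kappa}{1+\kappa}+7\bigr)\nu+8n+12-4\kappa^2=0$. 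Since $\kappa\sim 1$, $\kappa<1$, both the $\nu$-coefficient and the constant term are positive, so the \textbf{Routh--Hurwitz criterion} applied to this quadratic gives $\mathrm{Re}\,\nu<0$ directly, and that is where the paper stops. Your proposal never touches (3.28) or Routh--Hurwitz.

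What you do instead --- deduce from $d_n\to1$ that the Frobenius series has radius of convergence exactly $1$, refine $\tilde d_n$ to extract $a_n\simeq c\,n^{\theta(\nu,\kappa)}$, transfer this to the boundary behaviour $v_\nu(\rho)\sim(1-\rho)^{-\theta-1}$, and then exclude $\mathrm{Re}\,\nu\ge 0$ by showing the regular-at-$0$ branch lands outside $\mathbb{H}^2_\rho(\mathbb{B}^3)$ --- is exactly the Costin--Donninger quasi-solution philosophy. The paper explicitly says that approach is blocked here: the characteristic equation of the reduced difference equation has the double root $R=1$, so Poincar\'e's theorem and the machinery of \cite{Cos2,Cos3} do not apply, and one cannot hope to read off a clean $a_n\simeq c\,n^\theta$; the Birkhoff--Trjitzinsky expansion in the ramified case (Lemma 3.5 and the appendix) carries fractional powers of $n$ and possibly logarithms, which is precisely why the paper resorts to the Newton polygon plus Routh--Hurwitz. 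So the core step of your proposal reconstructs the very obstruction the paper identifies and does not resolve it: you would need a quantitative $a_n$-asymptotic and a transfer theorem valid in the ramified/logarithmic regime, and a computed indicial equation at $\rho=1$; none of these is supplied, and your remark that its discriminant ``carries the factor $\nu^2+3\nu-4$ in the limit $\kappa\to1^-$'' is unsubstantiated. Finally, the essential-spectrum discussion you include is not part of this lemma; in the paper that step is carried out separately in Proposition 3.2, after Lemma 3.6 has been established.
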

\begin{proof}
We now divide the proof into two steps to determine the asymptotics of $\tilde{d}_n$ to (\ref{E3-22R1}).

\textbf{Step1.} This step finds the asymptotics of $d_n$ by studying the Newton diagram of the characteristic equation (\ref{E3-22}). It is the same with $\tilde{d}_{n+1}$ and so on.
By $p_3(n)$, $p_4(n)$ and $p_5(n)$ defined in (\ref{E3-23})-(\ref{E3-25}), we know that
\begin{equation*}
\Lambda=\{(2,0),(2(1-\bar{x}),2),(2(1-2\bar{x}),4)\}.
\end{equation*}
A polygon is contructed as the convex null of the set $\Lambda$. Denote the edges of the polygon with respect to which the polygon is on the bottom side by $S$. This means that the equation $\beta\leq a\alpha+b$ determines the half-plane containing the polygon, and the straight line bounding this half plane contains an edge of the polygon.

Let $-\mu^{-1}_0$ be the the slope of the steepest segment of the Newton polygon, and $\mu_1$ be the intercept on the $\alpha$-axis of the line through $(0,\tilde{m})$ with slope $-\mu^{-1}_0$. So there is $\mu_1=\tilde{m}\mu_0$. For the difference equation (\ref{E3-22R1}), there are
\begin{equation*}
-\mu^{-1}_0=\frac{4-2}{2(1-2\bar{x})-2(1-\bar{x})}=-\frac{1}{\bar{x}},
\end{equation*}
and $\tilde{m}=4+\frac{1}{\bar{x}}$ and $\mu_1=4\bar{x}+1$. The line through $(0,\tilde{m})$ with slope $-\mu^{-1}_0$ is
\begin{equation}\label{E3-26}
\alpha+\bar{x}\beta=4\bar{x}+1.
\end{equation}

We find a simple way to solve our problem when constructing the Newton polygon, i.e. let the point $(2,0)$ be in the line (\ref{E3-26}).
Thus we get $\bar{x}=\frac{1}{4}$, and
\begin{equation*}
\Lambda=\{(2,0),(\frac{3}{2},2),(1,4)\}.
\end{equation*}
Then the Newton polygon is only a line through $(0,8)$ with slope $\frac{1}{4}$.

\textbf{Step2.} Set
\begin{equation}\label{E3-31}
\tilde{d}_n=n^{\frac{1}{4}},
\end{equation}
then substituting it into (\ref{E3-22R1}), we have
\begin{equation}\label{E3-27}
p_3(n)+p_4(n)+p_5(n)=0.
\end{equation}

Substituting (\ref{E3-23})-(\ref{E3-25}) into (\ref{E3-27}), it has
\bel{E3-28}
2\nu^2+(4\kappa^2-{1-\kappa\over 1+\kappa}+7)\nu+8n+12-4\kappa^2=0.
\ee

Let
\begin{eqnarray*}
&&P(\nu)=2\nu^2+(4\kappa^2-{1-\kappa\over 1+\kappa}+7)\nu+8n+12-4\kappa^2,\\
&&Q(\nu)=(4\kappa^2-{1-\kappa\over 1+\kappa}+7)\nu.
\end{eqnarray*}
Then direct computation shows that
\begin{eqnarray}\label{E3-29}
\frac{Q(\nu)}{P(\nu)}=\frac{1}{2(4\kappa^2-{1-\kappa\over 1+\kappa}+7)^{-1}+1+(8n+12-4\kappa^2)(4\kappa^2-{1-\kappa\over 1+\kappa}+7)^{-1}\nu^{-1}}.
\end{eqnarray}
Since $\kappa\sim1$ and $\kappa<1$, there are
$$
4\kappa^2-{1-\kappa\over 1+\kappa}+7>0,
$$
and
$$
8n+12-4\kappa^2>0.
$$

Hence by the Routh-Hurwitz criterion (see Theorem A in \cite{Wa}) and (\ref{E3-29}), all the $\nu$ in (\ref{E3-28}) have negative real parts. It follows form (\ref{E3-30}) and (\ref{E3-31}) that $\lim_{n\rightarrow\infty}d_n=1$.
\end{proof}

%\begin{proposition}
%Let $\kappa\in(\frac{\sqrt{3}-1}{2},\frac{2\sqrt{7}}{7})$. The form $\pm\kappa(T-t)\sqrt{1-(\frac{r}{T-t})^2}$ is mode stable with respect to the decompostion (\ref{E3-1}) of equation (\ref{nnE2-4}).
%\end{proposition}

\begin{proposition}
$\sigma(\mathcal{A})\subset\{\nu\in\mathbb{C}:Re~\nu<0\}$.
\end{proposition}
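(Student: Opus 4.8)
The plan is to upgrade the point-spectrum information in Lemma 3.6 to a statement about the full spectrum $\sigma(\Acal)$ by exploiting the structural decomposition $\Acal=\Acal_0+\Acal_1$ together with the dissipativity and generation results of Lemmas 3.1--3.4 and Proposition 3.1. First I would record the consequence of dissipativity: since $\Acal_0$ is a densely defined closed dissipative operator generating a $\CC_0$-semigroup of contractions (Lemma 3.3, Lemma 3.4), the half-plane $\{\mathrm{Re}\,\nu>0\}$ lies in the resolvent set $\rho(\Acal_0)$ and $\|R_{\Acal_0}(\nu)\|\le (\mathrm{Re}\,\nu)^{-1}$ there; in fact the sharper estimate from Lemma 3.3 (a uniform spectral gap, cf.\ the constant $\varsigma_0$ in (3.36Y3)) should give $\sigma(\Acal_0)\subset\{\mathrm{Re}\,\nu<0\}$ with a strict margin. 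Then I would write, for $\mathrm{Re}\,\nu\ge 0$,
\[
\nu-\Acal=\nu-\Acal_0-\Acal_1=\bigl(I-\Acal_1 R_{\Acal_0}(\nu)\bigr)\,(\nu-\Acal_0),
\]
so that $\nu\in\rho(\Acal)$ as soon as $I-\Acal_1 R_{\Acal_0}(\nu)$ is invertible. Because $\Acal_1$ is compact (Lemma 3.4) and $R_{\Acal_0}(\nu)$ is bounded, the operator $\Acal_1 R_{\Acal_0}(\nu)$ is compact and depends analytically on $\nu$ in the open right half-plane; by the analytic Fredholm theorem, either $I-\Acal_1 R_{\Acal_0}(\nu)$ is invertible for all $\nu$ in that half-plane except a discrete set, or it is nowhere invertible.

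Next I would rule out the pathological alternative by a decay-at-infinity argument: for $\mathrm{Re}\,\nu\to+\infty$ along the real axis one has $\|\Acal_1 R_{\Acal_0}(\nu)\|\le \|\Acal_1\|\,(\mathrm{Re}\,\nu)^{-1}\to 0$, hence $I-\Acal_1 R_{\Acal_0}(\nu)$ is invertible for $\nu$ large real, which forces the ``discrete set'' alternative. Consequently $\sigma(\Acal)\cap\{\mathrm{Re}\,\nu\ge0\}$ is a discrete set consisting only of eigenvalues of $\Acal$ of finite multiplicity (the essential spectrum is unaffected by the compact perturbation $\Acal_1$, so it stays in $\{\mathrm{Re}\,\nu<0\}$ by the $\Acal_0$ bound). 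Therefore any $\nu\in\sigma(\Acal)$ with $\mathrm{Re}\,\nu\ge0$ must be an eigenvalue, i.e.\ $\nu\in\sigma_p(\Acal)$. But Lemma 3.6 established $\sigma_p(\Acal)\subset\{\mathrm{Re}\,\nu<0\}$: the Frobenius/Newton-polygon analysis shows the only candidate eigenfunctions $v_\nu=e^{\nu\tau}v_\nu$ give power series with the right radius of convergence precisely when $\mathrm{Re}\,\nu<0$, and all $\nu$ solving the resulting relation (3.28) have negative real part by Routh--Hurwitz. This contradiction shows $\sigma(\Acal)\cap\{\mathrm{Re}\,\nu\ge0\}=\emptyset$, which is the claim.

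The step I expect to be the main obstacle is making rigorous the transition ``$\nu\in\sigma(\Acal)$ with $\mathrm{Re}\,\nu\ge0$ $\Rightarrow$ $\nu$ is a genuine eigenvalue of $\Acal$ in the Hilbert space $\Hcal$'', i.e.\ identifying the spectrum of the abstract generator $\Acal$ with the ODE eigenvalue problem (3.4). One must check that a formal power-series solution $v_\nu$ of (3.4) with finite radius of convergence at $\rho=0$ extends to a genuine element of the domain $\DD(\Acal)\subset\Hcal$ on $(0,\sigma]$ — handling the regular singular point at $\rho=0$ (the $\rho^{-1}$ term) via the Frobenius indicial analysis already done, and verifying the boundary behaviour at $\rho=\sigma$ is compatible with the Hilbert-space setting — and conversely that no eigenfunction of $\Acal$ can escape this classification. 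A secondary technical point is confirming that the essential spectrum of $\Acal$ coincides with that of $\Acal_0$, which follows from Weyl's theorem on invariance of the essential spectrum under the compact perturbation $\Acal_1$, but requires one to be careful about which notion of essential spectrum is used since $\Acal$ is non-selfadjoint; using the generation property (Proposition 3.1) together with the resolvent factorization above sidesteps most of this by working directly with $R_{\Acal_0}(\nu)$ rather than with an abstract Weyl sequence.
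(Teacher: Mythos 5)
Your proposal is correct and follows essentially the same strategy as the paper: both use the splitting $\mathcal{A}=\mathcal{A}_0+\mathcal{A}_1$, the dissipativity of $\mathcal{A}_0$ to place the closed right half-plane in $\rho(\mathcal{A}_0)$, the factorization $\nu-\mathcal{A}=\bigl(I-\mathcal{A}_1R_{\mathcal{A}_0}(\nu)\bigr)(\nu-\mathcal{A}_0)$, the compactness of $\mathcal{A}_1$, and a contradiction with Lemma~3.6. The one place you diverge is in how you turn ``$1\in\sigma\bigl(\mathcal{A}_1R_{\mathcal{A}_0}(\nu)\bigr)$'' into ``$1$ is an eigenvalue'': you invoke the analytic Fredholm theorem plus a decay-at-infinity check along the real axis to rule out the ``nowhere invertible'' branch, which yields the extra (unneeded) information that the bad set is discrete. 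The paper goes more directly via Riesz--Schauder theory: any nonzero spectral value of a compact operator is an eigenvalue of finite multiplicity, so $1\in\sigma_p\bigl(\mathcal{A}_1R_{\mathcal{A}_0}(\nu)\bigr)$ immediately, producing an eigenvector $u$ with $(\nu-\mathcal{A})u=0$ and hence $\nu\in\sigma_p(\mathcal{A})$, contradicting Lemma~3.6. Your route is heavier machinery for the same conclusion; if you adopt the Riesz--Schauder shortcut you can drop the analytic-continuation and large-$\nu$ resolvent estimate entirely. Two of the caveats you raise are genuine and worth keeping in mind: (i) pure dissipativity of $\mathcal{A}_0$ gives $\{\mathrm{Re}\,\nu>0\}\subset\rho(\mathcal{A}_0)$, but covering the imaginary axis requires either the strict margin coming from the $\varsigma_0$ estimate in the proof of Lemma~3.2 or the invertibility statement of Lemma~3.3, and the paper is terse on this; (ii) identifying $\sigma_p(\mathcal{A})$ in $\mathcal{H}$ with the Frobenius/Newton-polygon eigenvalue analysis of the singular ODE~(3.4), including domain and endpoint-behaviour checks, is assumed rather than argued in the paper as well.
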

\begin{proof}
By contradiction, there is a $\nu\geq0$ in $\sigma(\mathcal{A})$. Since $\mathcal{A}_0$ is a dissipative operator proven in Lemma 3.2, $\nu$ is
contained in the resolvent set of $\mathcal{A}_0$, and $\nu-\mathcal{A}=(1-\mathcal{A}'R_{\mathcal{A}_0}(\nu))(\nu-\mathcal{A}_0)$. So $1\in\sigma(\mathcal{A}'R_{\mathcal{A}_0}(\nu))$. By the compactness of $\mathcal{A}'R_{\mathcal{A}_0}(\nu)$, there is $1\in\sigma_p(\mathcal{A}'R_{\mathcal{A}_0}(\nu))$. Furthermore, let $u=\mathcal{A}'R_{\mathcal{A}_0}(\nu)$, we have $(\nu-\mathcal{A})u=0$. Consequently, $0\leq\nu\in\sigma_{p}(\mathcal{A}_0)$. This conflicts with $\sigma_p(\mathcal{A})\subset\{\nu\in\mathbb{C}:Re~\nu<0\}$ in Lemma 3.6.
\end{proof}

\subsection{Decay in time of the general approximation solution}

Let constants $l\geq2$ and $0<\eps\ll 1$, we define
$$
\Bcal_{\eps,l}:=\{v\in\mathcal{C}^{l}_{1}:\quad \|v\|_{\mathcal{C}^{l}_{1}}\leq \eps\}.
$$

We denote the general approximation solution by $v^{(0)}+w$. Assume that  $w\in\Bcal_{\eps,l}$.
Linearizing equation (\ref{E2-7RR}) around $v^{(0)}+w$, then the linearized operator takes the form
\bel{YAE3-1}
\aligned
\Lcal[w] v:=&\Big(1+(\kappa^2-1)\rho^2+a_0(w)\Big)v_{\tau\tau}-\Big((1-\kappa^2)(1-\rho^2)^2+a_1(w)\Big)v_{\rho\rho}\\
&+\Big(4\kappa^2-1+(\kappa-1)^2\rho^2+a_2(w)\Big)v_{\tau}+\Big(2\rho(1-\kappa^2)(1-\rho^2)+a_3(w)\Big)v_{\tau\rho}\\
&\quad +\Big(-(1-\kappa^2)(1-\rho^2)\rho^{-1}+a_4(w)\Big)v_{\rho}+\Big(-4\kappa^2+a_5(w)\Big)v,
\endaligned
\ee
where
\bel{YAE3-1RRR1}
\aligned
a_0(w)&:=-2\kappa\rho(1-\rho^2)^{-{1\over 2}}w_{\rho}+w_{\rho}^2,\\
a_1(w)&:=2\kappa(1-\rho^2)^{{1\over 2}}(w+w_{\tau})-(w+w_{\tau})^2,\\
a_2(w)&:=2\Big(\rho^{-1}(w_{\tau}-w)-w_{\tau\rho}-\kappa(1+2\rho^{-1})(1-\rho^2)^{{1\over2}}\Big)w_{\rho}\\
&\quad\quad+2\Big(\kappa(2-\rho^2)(1-\rho^2)^{-{3\over2}}-w_{\rho\rho}\Big)(w-w_{\tau})+w_{\rho}^2-2\kappa(1-\rho^2)^{{1\over2}}w_{\rho\rho}+2\kappa\rho(1-\rho^2)^{-{1\over2}}w_{\tau\rho},\\
a_3(w)&:=2\kappa\rho(1-\rho^2)^{-{1\over2}}(w_{\tau}-w)-2w_{\rho}(w_{\tau}-w-\kappa(1-\rho^2)^{{1\over 2}}),\\
a_4(w)&:=2\kappa\rho^{-1}(1-\rho^2)^{-{1\over2}}\Big((1+\rho^2)w+5\rho(1-\rho^2)w_{\rho}-w_{\tau}
-\rho^2w_{\tau\tau}\Big)-2w_{\rho}(w_{\tau\tau}+w_{\tau}-2w)\\
&\quad\quad-2w_{\tau\rho}(w_{\tau}-w)+\rho^{-1}(w_{\tau}-w)^2-3\rho^{-1}(1-\rho^2)w_{\rho}^2,\\
a_5(w)&:=2\kappa\rho^{-1}(1-\rho^2)^{-{1\over2}}\Big(\rho^3(1-\rho^2)^{-1}(w-w_{\tau})+(1+\rho^2)w_{\rho}-\rho w_{\rho\rho}-\rho^2w_{\tau\rho}\Big)\\
&\quad\quad-2w_{\rho}^2-2w_{\rho\rho}(w-w_{\tau})+2w_{\tau\rho}w_{\rho}-2\rho^{-1}w_{\rho}(w_{\tau}-w).
\endaligned
\ee

We consider the decay in time of solution for the linear problem
\bel{YAE3-2}
\aligned
&\Lcal[w] v=0,\quad \tau>0,\\
& v(0,\rho)=v_0,\quad v_{\tau}(0,\rho)=v_1,
\endaligned
\ee
with the boundary condition
$$
v(\cdot,0)=v(\cdot,\sigma)=0,\quad v_{\rho}(\cdot,0)=v_{\rho}(\cdot,\sigma)=0.
$$

\begin{lemma}
Assume that  $w\in\Bcal_{\eps,l}$. Then the solution of (\ref{YAE3-2}) satisfies
\bel{YAE3-3R1}
\int_0^{\sigma}\Big(v_{\tau}^2+v_{\rho}^2+v^2\Big)d\rho\lesssim e^{-C_{\kappa,\sigma,\eps}\tau}\int_0^{\sigma}\Big(v_0^2+v_0'^2+v_1^2\Big)d\rho,\quad \forall \tau>0,
\ee
where $C_{\kappa,\sigma,\eps}$ is a positive constant depending on $\kappa$, $\sigma$ and $\eps$.
\end{lemma}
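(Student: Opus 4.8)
The plan is to prove \eqref{YAE3-3R1} by a weighted, hypocoercive energy estimate for the damped, variable‑coefficient, non‑selfadjoint wave operator $\Lcal[w]$, viewing it as an $O(\eps)$ perturbation of $\Lcal^{(0)}$. Abbreviate the six coefficients in \eqref{YAE3-1} as $\beta_0=1+(\kappa^2-1)\rho^2+a_0(w)$, $\beta_1=(1-\kappa^2)(1-\rho^2)^2+a_1(w)$, $\beta_2=4\kappa^2-1+(\kappa-1)^2\rho^2+a_2(w)$, $\beta_3=2\rho(1-\kappa^2)(1-\rho^2)+a_3(w)$, $\beta_4=-(1-\kappa^2)(1-\rho^2)\rho^{-1}+a_4(w)$, $\beta_5=-4\kappa^2+a_5(w)$, so that $\Lcal[w]v=\beta_0 v_{\tau\tau}-\beta_1 v_{\rho\rho}+\beta_2 v_\tau+\beta_3 v_{\tau\rho}+\beta_4 v_\rho+\beta_5 v$. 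Since $\kappa\sim1$, $\rho\in(0,\sigma]$ with $\sigma<1$ fixed, $l\geq2$ and $\|w\|_{\mathcal{C}^l_1}\leq\eps$, each $a_i(w)$ and each $\partial_\tau a_i(w)$ is bounded on $(0,\sigma]$ by $C_{\kappa,\sigma}\eps$; the apparent singularities $(1-\rho^2)^{-3/2}$ and $\rho^{-1}$ in \eqref{YAE3-1RRR1} are harmless because $\rho\leq\sigma<1$ tames the former, and in every occurrence the factor $\rho^{-1}$ multiplies a radial quantity vanishing to first order at $\rho=0$, exactly as in \eqref{E3-33}. For the admissible range of $\kappa$ and $\eps$ the principal coefficients stay positive, $\beta_0\geq\kappa^2-C\eps>0$ and $\beta_1\geq(1-\kappa^2)(1-\sigma^2)^2-C\eps>0$, and so is the damping coefficient $\beta_2\geq 4\kappa^2-1-C\eps>0$; these are the structural inputs.

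The estimate itself uses two multipliers. First multiply $\Lcal[w]v=0$ by $v_\tau$ and integrate over $(0,\sigma)$; all boundary contributions vanish thanks to $v(\cdot,0)=v(\cdot,\sigma)=0$ and $v_\rho(\cdot,0)=v_\rho(\cdot,\sigma)=0$ (hence $v_\tau=0$ at both ends), so integration by parts turns $-\beta_1 v_{\rho\rho}$ into $\tfrac12\frac{d}{d\tau}\int\beta_1 v_\rho^2$, the mixed term into $-\tfrac12\int\beta_3'v_\tau^2$, and the $-4\kappa^2v$ part of $\beta_5 v$ into $-2\kappa^2\frac{d}{d\tau}\int v^2$, all modulo $O(\eps)\int(v^2+v_\rho^2+v_\tau^2)$, yielding $\frac{d}{d\tau}\mathcal{E}_0\leq-(4\kappa^2-1-C\eps)\int v_\tau^2+C\eps\int(v^2+v_\rho^2+v_\tau^2)$ with $\mathcal{E}_0:=\tfrac12\int(\beta_0 v_\tau^2+\beta_1 v_\rho^2)-2\kappa^2\int v^2$. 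This $\mathcal{E}_0$ is \emph{not} coercive, owing to the destabilizing term $-4\kappa^2v$ (precisely the term producing the unstable eigenvalue $\nu=4$ of \eqref{E2-7r}), and its dissipation controls only $v_\tau$. To cure both I would also multiply $\Lcal[w]v=0$ by $\delta v$ for small $\delta>0$; now $-\beta_1 v_{\rho\rho}$ produces the favourable $+\delta\int\beta_1 v_\rho^2$, $\beta_0 v_{\tau\tau}$ produces $\delta\frac{d}{d\tau}\int\beta_0 vv_\tau-\delta\int\beta_0 v_\tau^2$, and the rest is $O(\eps)$ or cross terms handled by Young's inequality. Setting $\mathcal{E}:=\mathcal{E}_0+\delta\int\beta_0 vv_\tau$, and choosing $\sigma$ small enough that the Poincaré inequality $\int_0^\sigma v^2\leq(\sigma/\pi)^2\int_0^\sigma v_\rho^2$ forces $2\kappa^2\int v^2\leq\tfrac14\int\beta_1 v_\rho^2$, then $\delta$ small, then $\eps$ small, one obtains simultaneously $\mathcal{E}(\tau)\simeq\int_0^\sigma(v^2+v_\rho^2+v_\tau^2)\,d\rho$ and $\frac{d}{d\tau}\mathcal{E}(\tau)\leq-C_{\kappa,\sigma,\eps}\mathcal{E}(\tau)$; Grönwall's lemma together with $\mathcal{E}(0)\simeq\int_0^\sigma(v_0^2+v_0'^2+v_1^2)\,d\rho$ then gives \eqref{YAE3-3R1}.

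The main obstacle is the coercivity step. The wrong‑signed coefficient $-4\kappa^2$, together with the non‑selfadjointness coming from the first‑order and mixed $v_{\tau\rho}$ terms, makes no naive energy work, and this is exactly why the decay holds only on the narrow slice $\rho\in(0,\sigma]$: there, Poincaré's inequality lets the small but positive elliptic coefficient $\beta_1\sim(1-\kappa^2)(1-\rho^2)^2$ dominate $4\kappa^2$. Getting the hierarchy of small parameters right — fixing $\sigma$ first (which, through the $(T\sigma)^{-1/2}$ factor in $\kappa_{\eps,\sigma}$, opens up room for $\kappa$), then $\eps$, then $\kappa$ in the admissible window so that $1-\kappa^2\gtrsim\sigma^2+C\eps$ and $\beta_1$ stays positive — is the genuinely delicate part; the integrations by parts themselves are routine. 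Alternatively one could read off exponential decay from the spectral result of Section 3.2 (placing $\sigma(\mathcal{A})$ in $\{Re\,\nu<0\}$) combined with the bounded‑perturbation construction of Section 3.1 applied to the $O(\eps)$ terms $a_i(w)$; but the multiplier argument above has the advantage of producing the explicit rate $C_{\kappa,\sigma,\eps}$ demanded by \eqref{YAE3-3R1}.
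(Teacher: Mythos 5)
Your overall strategy is the same family as the paper's --- a multiplier-method energy estimate with Gr\"onwall at the end --- but the specific multiplier and, crucially, the coercivity mechanism you choose are different, and the coercivity step in your version has a genuine gap.

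The paper multiplies $\Lcal[w]v=0$ by $v_\tau-\mu_1 v_\rho+\mu_2 v$ with $0<\mu_1<1<\mu_2$ \emph{fixed but not small}; after one integration in $\rho$ the coefficient of $\tfrac12 v^2$ in the resulting energy functional is $4(\mu_2-1)\kappa^2-\mu_2$, which is positive as soon as $\mu_2>4\kappa^2/(4\kappa^2-1)$. Thus the destabilizing $-4\kappa^2 v$ term is neutralized purely by the \emph{zeroth-order damping} $\bigl(4\kappa^2-1\bigr)v_\tau$, independently of $\sigma$, $\eps$, or the size of $1-\kappa^2$. No Poincar\'e inequality is invoked to control $\int v^2$.

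Your scheme instead uses $v_\tau$ and $\delta v$ with $\delta$ \emph{small}, so the $v^2$ part of your $\Ecal_0$ keeps its manifestly negative coefficient $-2\kappa^2$. To repair this you call on the Poincar\'e inequality $\int_0^{\sigma}v^2\leq(\sigma/\pi)^2\int_0^{\sigma}v_\rho^2$ so that $2\kappa^2\int v^2\leq\tfrac14\int\beta_1 v_\rho^2$. But $\beta_1\simeq(1-\kappa^2)(1-\rho^2)^2+O(\eps)$, so this requires roughly $\kappa^2\sigma^2\lesssim 1-\kappa^2$. You acknowledge this and propose the hierarchy ``$1-\kappa^2\gtrsim\sigma^2+C\eps$''. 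That hierarchy is, however, incompatible with the paper's admissible window for $\kappa$: the formula $\kappa_{\eps,\sigma}=1-C(\sigma,T)\,\eps$ forces $1-\kappa\lesssim\eps$, hence $1-\kappa^2\lesssim\eps<\sigma$, and once $\eps$ is taken ``sufficiently small depending on $\sigma$'' one has $1-\kappa^2\ll\sigma^2$, pointing the needed inequality the wrong way. In other words, in the regime where the rest of the paper operates, the elliptic coefficient $\beta_1$ is far too degenerate for your Poincar\'e absorption of $-4\kappa^2 v^2$ to go through; this is precisely why the paper routes the coercivity through the damping term via $\mu_2>1$ rather than through the principal part via Poincar\'e. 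Your final alternative remark (reading decay off the spectral bound $\sigma(\Acal)\subset\{\mathrm{Re}\,\nu<0\}$) would also not, as written, give the quantified rate $C_{\kappa,\sigma,\eps}$ or handle the $w$-dependent, $\tau$-dependent perturbations $a_i(w)$ uniformly, so the multiplier argument is the right skeleton --- but you would need to replace the small-$\delta$ plus Poincar\'e step by a $\mu_2>1$ multiplier (or an equivalent device that uses the $4\kappa^2-1$ damping rather than the $(1-\kappa^2)$ diffusion) to close it.
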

\begin{proof}
Let $\mu_1$ and $\mu_2$ be two positive constants, which will be chosen later.
Multiplying both sides of (\ref{YAE3-1}) by $v_{\tau}-\mu_1v_{\rho}+\mu_2 v$ and integrating over $(0,\delta]$, it holds
\bel{YAE3-3}
\aligned
&{d\over d\tau}\int_0^{\sigma}\Big[{1\over2}\Big(4(\mu_2-1)\kappa^2-\mu_2+\mu_2(\kappa-1)^2\rho^2+\mu_2 a_2(w)+a_5(w)\Big)v^2\\
&+\Big(1+(\kappa^2-1)\rho^2+a_0(w)\Big)\Big(v_{\tau}(\mu_2 v-\mu_1v_{\rho})+{1\over2}v_{\tau}^2\Big)+\mu_2\Big(2\rho(1-\kappa^2)(1-\rho^2)+a_3(w)\Big)v_{\rho}v\\
&+{1\over2}\Big((1-\kappa^2)(1-\rho^2)(1-2\mu_1\rho-\rho^2)+a_1(w)-\mu_1a_3(w)\Big)v_{\rho}^2\Big]d\rho\\
&+\int_0^{\sigma}A_1(\tau,\rho)v_{\tau}^2d\rho+{1\over2}\int_0^{\sigma}A_2(\tau,\rho)v^2d\rho+\int_0^{\sigma}A_3(\tau,\rho)v_{\rho}^2d\rho\\
&=\int_0^{\sigma}A_4(\tau,\rho)v_{\tau}v_{\rho}d\rho+\int_0^{\sigma}A_5(\tau,\rho)v_{\tau}vd\rho+\int_0^{\sigma}A_6(\tau,\rho)v_{\rho}vd\rho,
\endaligned
\ee
where
$$
\aligned
A_1(\tau,\rho)&:=5\kappa^2-2-\mu_2+2\Big(2-\kappa-\kappa^2+{\mu_2(1-\kappa^2)\over 2}\Big)\rho^2+\mu_1(1-\kappa^2)\rho-
{\mu_1\over2}{\del a_0(w)\over\del\rho}+a_2(w)\\
&\quad-\mu_2a_0(w)-{1\over2}{\del a_0(w)\over\del\tau}-{1\over2}{\del a_3(w)\over \del\rho},\\
A_2(\tau,\rho)&:=\mu_2\rho^{-2}(1-\kappa^2)(1-\rho^2)+2\mu_2(1-5\kappa^2)-\mu_2({\del a_2(w)\over\del\tau}+{\del a_4(w)\over\del\rho}-a_5(w))\\
&\quad+\mu_1{\del a_5(w)\over\del\rho}-{\del a_5(w)\over\del\tau},\\
A_3(\tau,\rho)&:=\mu_1\rho^{-1}(1-\kappa^2)(1-\rho^2)(1+2\rho^2)+\mu_2(1-\kappa^2)(1-\rho^2)^2-{1\over2}({\del a_1(w)\over \del \tau}+\mu_1{\del a_1(w)\over \del\rho})\\
&\quad+\mu_1{\del a_3(w)\over \del\tau}+\mu_2a_1(w)-\mu_1a_4(w),\\
A_4(\tau,\rho)&:=\mu_1(4\kappa^2-1)+\mu_1(\kappa-1)^2\rho^2+(1-\kappa^2)(1-\rho^2)\Big(2(2+\mu_2)\rho+\rho^{-1}\Big)+\mu_1{\del a_0(w)\over \del\tau}+{\del a_1(w)\over \del\rho}\\
&\quad+\mu_1a_2(w)-a_4(w)+\mu_2a_3(w),
\endaligned
$$
and
$$
\aligned
A_5(\tau,\rho)&:=\mu_2({\del a_0\over\del\tau}+{\del a_3\over\del\tau}),\\
A_6(\tau,\rho)&:=4\mu_2\rho(1-\kappa^2)(1-\rho^2)-\mu_2{\del a_1\over\del\rho}.
\endaligned
$$

We now estimate each of term in (\ref{YAE3-3}).
Note that $w\in\Bcal_{\eps,l}$. By the de l'H\^{o}pital's rule (\ref{E3-33}), we integrate by parts to derive
\bel{YAE3-4}
\aligned
&\Big|\int_0^{\sigma}\Big(-
{\mu_1\over2}{\del a_0(w)\over\del\rho}+a_2(w)-\mu_2a_0(w)-{1\over2}{\del a_0(w)\over\del\tau}-{1\over2}{\del a_3(w)\over \del\rho}\Big)v_{\tau}^2d\rho\Big|\lesssim \eps C_{\kappa,\sigma}\int_0^{\sigma}v_{\tau}^2d\rho,\\
&\Big|\int_0^{\sigma}\Big(-\mu_2({\del a_2(w)\over\del\tau}+{\del a_4(w)\over\del\rho}-a_5(w))+\mu_1{\del a_5(w)\over\del\rho}-{\del a_5(w)\over\del\tau}\Big)v^2d\rho\Big|\lesssim \eps C_{\kappa,\sigma}\int_0^{\sigma}v^2d\rho,\\
&\Big|\int_0^{\sigma}\Big(-{1\over2}({\del a_1(w)\over \del \tau}+\mu_1{\del a_1(w)\over \del\rho})+\mu_1{\del a_3(w)\over \del\tau}+\mu_2a_1(w)-\mu_1a_4(w)\Big)v_{\rho}^2d\rho\Big|\lesssim  \eps C_{\kappa,\sigma}\int_0^{\sigma}v_{\rho}^2d\rho,
\endaligned
\ee
and
\bel{YAE3-4a1}
\aligned
&\Big|\int_0^{\sigma}\Big(\mu_1{\del a_0(w)\over \del\tau}+{\del a_1(w)\over \del\rho}+\mu_1a_2(w)-a_4(w)+\mu_2a_3(w)\Big)v_{\tau}v_{\rho}d\rho\Big|\lesssim  {\eps C_{\kappa,\sigma}\over2}\int_0^{\sigma}(v_{\tau}^2+v_{\rho}^2)d\rho,\\
&\Big|\int_0^{\sigma}\Big({\del a_0\over\del\tau}+{\del a_3\over\del\tau}\Big)v_{\tau}vd\rho\Big|\lesssim  {\eps C_{\kappa,\sigma}\over2}\int_0^{\sigma}(v_{\tau}^2+v^2)d\rho,\\
&\Big|\int_0^{\sigma}{\del a_1\over\del\rho}v_{\rho}vd\rho\Big|\lesssim  {\eps C_{\kappa,\sigma}\over2}\int_0^{\sigma}(v_{\rho}^2+v^2)d\rho,
\endaligned
\ee
where $C_{\kappa,\sigma}$ is a positive constant depending on $\kappa$ and $\sigma$. 

Furthermore, it follows from (\ref{YAE3-4a1}) that
\bel{YAE3-5}
\aligned
&\Big|\int_0^{\sigma}A_4(\tau,\rho)v_{\tau}v_{\rho}d\rho\Big|\\
&\lesssim{1\over2}\int_0^{\sigma}\Big[\mu_1(4\kappa^2-1)+\mu_1(\kappa-1)^2\rho^2+2\rho(1-\kappa^2)\Big((1-\rho^2)(2+\mu_2)-{1\over2}\Big)-\eps C_{\kappa,\sigma}\Big]\Big(v_{\tau}^2+v_{\rho}^2\Big)d\rho,\\
&\Big|\int_0^{\sigma}A_5(\tau,\rho)v_{\tau}vd\rho\Big|\lesssim{\mu_2\eps C_{\kappa,\sigma}\over2}\int_0^{\sigma}\Big(v_{\tau}^2+v^2\Big)d\rho,\\
&\Big|\int_0^{\sigma}A_6(\tau,\rho)v_{\rho}vd\rho\Big|\lesssim{1\over2}\int_0^{\sigma}\Big[4\mu_2\rho(1-\kappa^2)(1-\rho^2)-\eps\mu_2C_{\kappa,\sigma}\Big]\Big(v_{\rho}^2+v^2\Big)d\rho.
\endaligned
\ee

Thus by (\ref{YAE3-4})-(\ref{YAE3-5}), there exists a positive constant $C_{\kappa,\sigma,\eps,\mu_1,\mu_2}$ (it depends on positive constants $\kappa,\sigma,\eps,\mu_1,\mu_2$) such that
the energy inequality (\ref{YAE3-3}) can be reduced into the following form
\bel{YAE3-6}
\aligned
&{d\over d\tau}\int_0^{\sigma}\Big[{1\over2}\Big(4(\mu_2-1)\kappa^2-\mu_2+\mu_2(\kappa-1)^2\rho^2+\mu_2 a_2(w)+a_5(w)\Big)v^2\\
&+\Big(1+(\kappa^2-1)\rho^2+a_0(w)\Big)\Big(v_{\tau}(\mu_2 v-\mu_1v_{\rho})+{1\over2}v_{\tau}^2\Big)+\mu_2\Big(2\rho(1-\kappa^2)(1-\rho^2)+a_3(w)\Big)v_{\rho}v\\
&+{1\over2}\Big((1-\kappa^2)(1-\rho^2)(1-2\mu_1\rho-\rho^2)+a_1(w)-\mu_1a_3(w)\Big)v_{\rho}^2\Big]d\rho+C_{\kappa,\sigma,\eps,\mu_1,\mu_2}\int_0^{\sigma}(v_{\tau}^2+v_{\rho}^2+v^2)d\rho\lesssim0,
\endaligned
\ee
moreover, let $0<\mu_1<1<\mu_2$, we integrate (\ref{YAE3-6}) over $(0,\tau)$ and use Young's inequality to derive
$$
\int_0^{\sigma}\Big(v_{\tau}^2+v_{\rho}^2+v^2\Big)d\rho+
C_{\kappa,\sigma,\eps}\int_0^{\tau}\int_0^{\sigma}\Big(v_{\tau}^2+v_{\rho}^2+v^2\Big)d\rho d\tau\lesssim\int_0^{\sigma}\Big(v_0^2+v_0'^2+v_1^2\Big)d\rho.
$$

Therefore, by Gronwall's inequality, we obtain
$$
\int_0^{\sigma}\Big(v_{\tau}^2+v_{\rho}^2+v^2\Big)d\rho\lesssim e^{-C_{\kappa,\sigma,\eps}\tau}\int_0^{\sigma}\Big(v_0^2+v_0'^2+v_1^2\Big)d\rho.
$$

\end{proof}

In what follows, we derive the $\HH^l$-estimate for the solution of (\ref{YAE3-2}).
We apply the operator $\del_{\rho}^l$ to both sides of (\ref{YAE3-2}) to get
\bel{YAE3-7}
\aligned
\Big(1+(\kappa^2-1)\rho^2&+a_0(w)\Big)\del_{\tau\tau}\del_{\rho}^lv-\Big((1-\kappa^2)(1-\rho^2)^2+a_1(w)\Big)\del_{\rho}^{l+2}v\\
&+\Big(4\kappa^2-1+(\kappa-1)^2\rho^2+a_2(w)\Big)\del_{\tau}\del^l_{\rho}v+\Big(2\rho(1-\kappa^2)(1-\rho^2)+a_3(w)\Big)\del_{\tau}\del_{\rho}^{l+1}v\\
&\quad +\Big(-(1-\kappa^2)(1-\rho^2)\rho^{-1}+a_4(w)\Big)\del_{\rho}^{l+1}v+\Big(-4\kappa^2+a_5(w)\Big)\del_{\rho}^lv=\textbf{f}_l,
\endaligned
\ee
with the initial data
$$
\del_{\rho}^lv(0,\rho)=v^{(l)}_0,\quad \del_{\rho}^lv_{\tau}(0,\rho)=v^{(l+1)}_1,
$$
and the boundary condition
$$
\del_{\rho}^lv(\cdot,0)=\del_{\rho}^{l}v(\cdot,\sigma)=0,\quad \del_{\rho}^{l+1}v(\cdot,0)=\del_{\rho}^{l+1}v(\cdot,\sigma)=0,
$$
where $2\leq l=l_1+l_2$ with $1\leq l_1\leq l$ and $0\leq l_2\leq l-1$, and
\bel{YAE3-7R1}
\aligned
\textbf{f}_l&:=\sum_{l=l_1+l_2}\del_{\rho}^{l_1}\Big(1+(\kappa^2-1)\rho^2+a_0(w)\Big)\del_{\tau\tau}\del_{\rho}^{l_2}v-\sum_{l=l_1+l_2}\del_{\rho}^{l_1}\Big((1-\kappa^2)(1-\rho^2)^2+a_1(w)\Big)\del_{\rho}^{l_2+2}v\\
&\quad+\sum_{l=l_1+l_2}\del_{\rho}^{l_1}\Big(4\kappa^2-1+(\kappa-1)^2\rho^2+a_2(w)\Big)\del_{\tau}\del_{\rho}^{l_2}v+\sum_{l=l_1+l_2}\del_{\rho}^{l_1}\Big(2\rho(1-\kappa^2)(1-\rho^2)+a_3(w)\Big)\del_{\tau}\del_{\rho}^{l_2+1}v\\
&\quad +\sum_{l=l_1+l_2}\del_{\rho}^{l_1}\Big(-(1-\kappa^2)(1-\rho^2)\rho^{-1}+a_4(w)\Big)\del_{\rho}^{l_2+1}v+\sum_{l=l_1+l_2}\del_{\rho}^{l_1}\Big(-4\kappa^2+a_5(w)\Big)\del_{\rho}^{l_2}v.
\endaligned
\ee

Since the small initial data of equation (\ref{YAE3-2}) can be changed into the zero initial data of it by using a transformation given in Proposition 4.2, we only need to use the zero initial data in each iteration step.

\begin{lemma}
Assume that  $w\in\Bcal_{\eps,l}$. Then there is a positive constant $\sigma$ such that for any $\rho\in(0,\sigma]$, the solution of (\ref{YAE3-2}) satisfying
\bel{YAE3-8R1}
\int_0^{\sigma}\Big((\del_{\tau}\del_{\rho}^lv)^2+(\del_{\rho}^{l+1}v)^2+(\del_{\rho}^{l}v)^2\Big)d\rho\lesssim e^{-C_{\kappa,\eps,\sigma,l}\tau}\int_0^{\sigma}\Big((\del^{l}_{\rho}\del_{\tau}v(0,\rho))^2+(v_1^{(l)})^2+(v_0^{(l)})^2\Big)d\rho,\quad \forall \tau>0.
\ee
where $C_{\kappa,\eps,\sigma,l}$ is a positive constant depending on $\kappa$, $\eps$, $\sigma$ and $l$.
\end{lemma}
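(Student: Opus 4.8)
The plan is to reproduce, at the level of $\del_\rho^l v$, the weighted-multiplier energy argument from the proof of Lemma 3.8, treating the commutator term $\textbf{f}_l$ in (\ref{YAE3-7}) as a forcing that is either genuinely lower order (hence controlled inductively) or quantitatively small (hence absorbed). I would argue by induction on $l$, with Lemma 3.8 serving as the base case.

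First I would fix multiplier parameters $0<\mu_1\ll1\ll\mu_2$ as in Lemma 3.8, multiply (\ref{YAE3-7}) by $\del_\tau\del_\rho^l v-\mu_1\del_\rho^{l+1}v+\mu_2\del_\rho^l v$, and integrate over $(0,\sigma]$. Using the boundary conditions $\del_\rho^l v(\cdot,0)=\del_\rho^l v(\cdot,\sigma)=0$ and $\del_\rho^{l+1}v(\cdot,0)=\del_\rho^{l+1}v(\cdot,\sigma)=0$ to discard the boundary contributions of the integrations by parts, this produces — verbatim with $v\mapsto\del_\rho^l v$ — the analogue of (\ref{YAE3-3})--(\ref{YAE3-6}): the time derivative of an energy functional $E_l(\tau)$ equivalent to $\int_0^\sigma\big((\del_\tau\del_\rho^l v)^2+(\del_\rho^{l+1}v)^2+(\del_\rho^l v)^2\big)d\rho$, plus a coercive bulk term of the same form with a positive constant, plus the $a_i(w)$-error terms (each $O(\eps)$ since $w\in\Bcal_{\eps,l}$), plus the pairing of $\textbf{f}_l$ against the multiplier.

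Next I would estimate $\textbf{f}_l$ by splitting (\ref{YAE3-7R1}) according to which factor the derivative $\del_\rho^{l_1}$ hits. The pieces in which $\del_\rho^{l_1}$ lands on some $a_i(w)$ are, after using de l'H\^{o}pital's rule (\ref{E3-33}) to treat the $\rho^{-1}$-singular terms near $\rho=0$, bounded by $\eps\,C_{\kappa,\sigma,l}$ times $\int_0^\sigma\big((\del_\tau\del_\rho^l v)^2+(\del_\rho^{l+1}v)^2+(\del_\rho^l v)^2\big)d\rho$ (here one invokes $w\in\Bcal_{\eps,l}$, with the extra smoothness of $w$ required by the highest-order commutators supplied by the Nash--Moser smoothing, and rewrites each $\del_{\tau\tau}\del_\rho^{l_2}v$ with $l_2\le l-1$ by means of the equation at the lower level); for $\eps$ small these are absorbed into the bulk term. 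The pieces in which $\del_\rho^{l_1}$ lands on a polynomial coefficient are nonzero only for $l_1$ up to the degree of that coefficient; those acting on $\del_\rho^{\le l}v$ (and on $\del_{\tau\tau}\del_\rho^{\le l-1}v$, again re-expressed through the lower-level equations) are handled by the induction hypothesis, while the one genuinely top-order contribution — essentially $-4\rho(1-\kappa^2)(1-\rho^2)\,\del_\rho^{l+1}v$ from $\del_\rho\big((1-\kappa^2)(1-\rho^2)^2\big)$, together with the $\rho^{-k}(1-\rho^2)$-type terms generated by differentiating $(1-\kappa^2)(1-\rho^2)\rho^{-1}$, the latter controlled exactly as in Lemma 3.8 via (\ref{E3-33}) and the vanishing of $\del_\rho^l v$ at $\rho=0$ — is absorbed into the coercive term by taking $\mu_1$ small enough. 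Collecting all terms gives the differential inequality $\frac{d}{d\tau}E_l+C_{\kappa,\eps,\sigma,l}E_l\lesssim0$; integrating and applying Gronwall's inequality then yields (\ref{YAE3-8R1}).

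The main obstacle is precisely this last absorption step. Since $\kappa\sim1$, the coercive (damping-in-$\rho$) part of the energy identity is only of size $O(1-\kappa^2)$, and the dangerous top-order part of $\textbf{f}_l$ is of the same size; one must therefore choose the multiplier parameters $0<\mu_1\ll1\ll\mu_2$ and the smallness threshold for $\eps$ in a way that remains compatible with the inductive hypothesis at every level $l$ and still leaves a strictly positive decay constant $C_{\kappa,\eps,\sigma,l}$. The other point demanding care, though otherwise routine, is the bookkeeping of the $\rho^{-k}$-singular commutators produced by differentiating $(1-\kappa^2)(1-\rho^2)\rho^{-1}$, which is carried out, as in Lemma 3.8, through repeated use of de l'H\^{o}pital's rule (\ref{E3-33}) together with the boundary and regularity conditions on $\del_\rho^l v$.
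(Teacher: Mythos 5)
Your proposal follows essentially the same path as the paper's proof of Lemma~3.9: apply $\del_{\rho}^{l}$ to \eqref{YAE3-2}, pair \eqref{YAE3-7} with the multiplier $\del_{\tau}\del_{\rho}^{l}v-\mu_1\del_{\rho}^{l+1}v+\mu_2\del_{\rho}^{l}v$, estimate the commutator forcing $\textbf{f}_l$ using de l'H\^{o}pital's rule \eqref{E3-33}, Young's and Poincar\'{e}'s inequalities and the boundary conditions, absorb into the coercive bulk, and conclude with Gronwall. The only minor departures are technical rather than structural: where you re-express $\del_{\tau\tau}\del_{\rho}^{l_2}v$ through the lower-level equation, the paper instead integrates by parts in $\tau$ and moves the $\del_{\tau\tau}\del_{\rho}^{l}v$ contribution into the $\frac{d}{d\tau}$-exact part of the identity (see \eqref{YAE3-12}--\eqref{YAE3-13R1}); and where you attribute the absorption of the top-order commutator to taking $\mu_1$ small, the paper's actual source of smallness is that $1-\kappa^2$ is itself $O(\eps_0)\ll\eps$ (cf.\ \eqref{YAE4-2R1}), so that every commutator generated by differentiating a polynomial coefficient already carries the same $\eps$-prefactor as the $a_i(w)$-commutators, which is how \eqref{YAE3-14R1}--\eqref{YAE3-15R1} obtain the uniform $\eps\,C_{\kappa,\sigma}\,l$ bound without any further parameter tuning.
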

\begin{proof}
This proof is based on the induction. The $\HH^1$-estimate has been obtained in Lemma 3.8. We now prove the $\HH^l$-estimates with $l\geq2$. Let constants $0<\mu_1<1<\mu_2$.
Multiplying both sides of (\ref{YAE3-7}) by $\del_{\tau}\del_{\rho}^lv-\mu_1\del_{\rho}^{l+1}v+\mu_2\del_{\rho}^{l}v$, then integrating it over $(0,\sigma]$, it holds
\bel{YAE3-8}
\aligned
&{d\over d\tau}\int_0^{\sigma}\Big[{1\over2}\Big(4(\mu_2-1)\kappa^2-\mu_2+\mu_2(\kappa-1)^2\rho^2+\mu_2 a_2(w)+a_5(w)\Big)(\del_{\rho}^{l}v)^2\\
&\quad+\Big(1+(\kappa^2-1)\rho^2+a_0(w)\Big)\Big(\del_{\tau}\del_{\rho}^{l}v(\mu_2 \del_{\rho}^{l}v-\mu_1\del_{\rho}^{l+1}v)+{1\over2}(\del_{\rho}^{l}\del_{\tau}v)^2\Big)\\
&\quad+\mu_2\Big(2\rho(1-\kappa^2)(1-\rho^2)+a_3(w)\Big)\del_{\rho}^{l+1}v\del_{\rho}^{l}v\\
&\quad+{1\over2}\Big((1-\kappa^2)(1-\rho^2)(1-2\mu_1\rho-\rho^2)+a_1(w)-\mu_1a_3(w)\Big)(\del_{\rho}^{l+1}v)^2\Big]d\rho\\
&\quad+\int_0^{\sigma}A_1(\tau,\rho)(\del_{\rho}^{l}\del_{\tau}v)^2d\rho+{1\over2}\int_0^{\sigma}A_2(\tau,\rho)(\del_{\rho}^{l}v)^2d\rho+\int_0^{\sigma}A_3(\tau,\rho)(\del_{\rho}^{l+1}v)^2d\rho\\
&=\int_0^{\sigma}A_4(\tau,\rho)\del_{\rho}^{l}\del_{\tau}v\del_{\rho}^{l+1}vd\rho+\int_0^{\sigma}A_5(\tau,\rho)\del_{\rho}^{l}\del_{\tau}v\del_{\rho}^{l}vd\rho+\int_0^{\sigma}A_6(\tau,\rho)\del_{\rho}^{l+1}v\del_{\rho}^{l}vd\rho\\
&\quad+\int_0^{\sigma}\Big(\del_{\tau}\del_{\rho}^lv-\mu_1\del_{\rho}^{l+1}v+\mu_2\del_{\rho}^{l}v\Big)\textbf{f}_ld\rho.
\endaligned
\ee

One can see equality (\ref{YAE3-8}) has the same structure with equality (\ref{YAE3-3}) except the last term. So we first estimate the last term $\int_0^{\sigma}\Big(\del_{\tau}\del_{\rho}^lv-\mu_1\del_{\rho}^{l+1}v+\mu_2\del_{\rho}^{l}v\Big)\textbf{f}_ld\rho$. On one hand, by (\ref{YAE3-7R1}),
we integrate by parts to compute
\bel{YAE3-12}
\aligned
&\sum_{l=l_1+l_2}\int_0^{\sigma}\del_{\rho}^{l_1}\Big(1+(\kappa^2-1)\rho^2+a_0(w)\Big)\Big(\del_{\tau\tau}\del_{\rho}^{l_2}v\Big)\Big(\del_{\tau}\del_{\rho}^lv-\mu_1\del_{\rho}^{l+1}v+\mu_2\del_{\rho}^{l}v\Big)d\rho\\
&=\sum_{l=l_1+l_2}{d\over d\tau}\int_0^{\sigma}\del_{\rho}^{l_1}\Big(1+(\kappa^2-1)\rho^2+a_0(w)\Big)\Big(\del_{\tau}\del_{\rho}^{l_2}v\Big)\Big(\del_{\tau}\del_{\rho}^lv-\mu_1\del_{\rho}^{l+1}v+\mu_2\del_{\rho}^{l}v\Big)d\rho\\
&\quad-\sum_{l=l_1+l_2}\int_0^{\sigma}\del_{\tau}\del_{\rho}^{l_1}\Big(1+(\kappa^2-1)\rho^2+a_0(w)\Big)\Big(\del_{\tau}\del_{\rho}^{l_2}v\Big)\Big(\del_{\tau}\del_{\rho}^lv-\mu_1\del_{\rho}^{l+1}v+\mu_2\del_{\rho}^{l}v\Big)d\rho\\
&\quad-\sum_{l=l_1+l_2}\int_0^{\sigma}\del_{\rho}^{l_1}\Big(1+(\kappa^2-1)\rho^2+a_0(w)\Big)\Big(\del_{\tau}\del_{\rho}^{l_2}v\Big)\Big(\del_{\tau\tau}\del_{\rho}^lv-\mu_1\del_{\tau}\del_{\rho}^{l+1}v+\mu_2\del_{\tau}\del_{\rho}^lv\Big)d\rho,
\endaligned
\ee
furthermore, note that $w\in\Bcal_{\eps,l}$, by Young's inequality and Poincar\'{e} inequality, it holds
\bel{YAE3-13}
\aligned
&\sum_{l=l_1+l_2}\Big|\int_0^{\sigma}\del_{\tau}\del_{\rho}^{l_1}\Big(1+(\kappa^2-1)\rho^2+a_0(w)\Big)\Big(\del_{\tau}\del_{\rho}^{l_2}v\Big)\Big(\del_{\tau}\del_{\rho}^lv-\mu_1\del_{\rho}^{l+1}v+\mu_2\del_{\rho}^{l}v\Big)d\rho\Big|\\
&\quad\quad\quad\quad\lesssim\eps C_{\kappa,\sigma}l\int_0^{\sigma}\Big((\del_{\tau}\del_{\rho}^lv)^2+(\del_{\rho}^{l+1}v)^2+(\del_{\rho}^{l}v)^2\Big)d\rho,
\endaligned
\ee
and
\bel{YAE3-13R1}
\aligned
&\sum_{l=l_1+l_2}\Big|\int_0^{\tau}\int_0^{\sigma}\del_{\rho}^{l_1}\Big(1+(\kappa^2-1)\rho^2+a_0(w)\Big)\Big(\del_{\tau}\del_{\rho}^{l_2}v\Big)\Big(\del_{\tau\tau}\del_{\rho}^lv-\mu_1\del_{\tau}\del_{\rho}^{l+1}v+\mu_2\del_{\tau}\del_{\rho}^lv\Big)d\rho d\tau\Big|\\
&\quad\quad\quad \lesssim\eps C_{\kappa,\sigma}l\Big[\int_0^{\tau}\int_0^{\sigma}\Big(\del_{\tau}(\del_{\rho}^{l+1}v)^2+(\del_{\tau}\del_{\rho}^lv)^2\Big)d\rho d\tau+\int_0^{\sigma}(v_1^{(l)})^2d\sigma\Big],
\endaligned
\ee
thus by (\ref{YAE3-12})-(\ref{YAE3-13R1}), it holds
\bel{YAE3-14}
\aligned
&\sum_{l=l_1+l_2}\int_0^{\tau}\int_0^{\sigma}\del_{\rho}^{l_1}\Big(1+(\kappa^2-1)\rho^2+a_0(w)\Big)(\del_{\tau\tau}\del_{\rho}^{l_2}v)(\del_{\tau}\del_{\rho}^lv-\mu_1\del_{\rho}^{l+1}v+\mu_2\del_{\rho}^{l}v)d\rho d\tau\\
&\quad\quad\quad \lesssim\eps C_{\kappa,\sigma}l\Big[\int_0^{\tau}\int_0^{\sigma}\Big((\del_{\tau}\del_{\rho}^lv)^2+(\del_{\rho}^{l+1}v)^2+(\del_{\rho}^{l}v)^2\Big)d\rho d\tau+\int_0^{\sigma}(v_1^{(l)})^2d\sigma\Big].
\endaligned
\ee

On the other hand, we use the de l'H\^{o}pital's rule (\ref{E3-33}) and Young's inequality to compute
\bel{YAE3-14R1}
\aligned
&\sum_{l=l_1+l_2}\Big|\int_0^{\sigma}\del_{\rho}^{l_1}\Big((1-\kappa^2)(1-\rho^2)^2+a_1(w)\Big)\del_{\rho}^{l_2+2}v\Big(\del_{\tau}\del_{\rho}^lv-\mu_1\del_{\rho}^{l+1}v+\mu_2\del_{\rho}^{l}v\Big)d\rho\Big|\\
&\quad\quad\quad\lesssim \eps C_{\kappa,\sigma}l\int_0^{\sigma}\Big((\del_{\tau}\del_{\rho}^lv)^2+(\del_{\rho}^{l+1}v_{\rho})^2+(\del_{\rho}^{l}v_{\rho})^2\Big)d\rho,\\
&\sum_{l=l_1+l_2}\Big|\int_0^{\sigma}\del_{\rho}^{l_1}\Big(4\kappa^2-1+(\kappa-1)^2\rho^2+a_2(w)\Big)\del_{\tau}\del_{\rho}^{l_2}v\Big(\del_{\tau}\del_{\rho}^lv-\mu_1\del_{\rho}^{l+1}v+\mu_2\del_{\rho}^{l}v\Big)d\rho\Big|\\
&\quad\quad\quad\lesssim \eps C_{\kappa,\sigma}l\int_0^{\sigma}\Big((\del_{\tau}\del_{\rho}^lv)^2+(\del_{\rho}^{l+1}v_{\rho})^2+(\del_{\rho}^{l}v_{\rho})^2\Big)d\rho,\\
&\sum_{l=l_1+l_2}\Big|\int_0^{\sigma}\del_{\rho}^{l_1}\Big(2\rho(1-\kappa^2)(1-\rho^2)+a_3(w)\Big)\del_{\tau}\del_{\rho}^{l_2+1}v\Big(\del_{\tau}\del_{\rho}^lv-\mu_1\del_{\rho}^{l+1}v+\mu_2\del_{\rho}^{l}v\Big)d\rho\Big|\\
&\quad\quad\quad\lesssim \eps C_{\kappa,\sigma}l\int_0^{\sigma}\Big((\del_{\tau}\del_{\rho}^lv)^2+(\del_{\rho}^{l+1}v_{\rho})^2+(\del_{\rho}^{l}v_{\rho})^2\Big)d\rho,\\
&\sum_{l=l_1+l_2}\Big|\int_0^{\sigma}\del_{\rho}^{l_1}\Big(-(1-\kappa^2)(1-\rho^2)\rho^{-1}+a_4(w)\Big)\del_{\rho}^{l_2+1}v\Big(\del_{\tau}\del_{\rho}^lv-\mu_1\del_{\rho}^{l+1}v+\mu_2\del_{\rho}^{l}v\Big)d\rho\Big|\\
&\quad\quad\quad\lesssim \eps C_{\kappa,\sigma}l\int_0^{\sigma}\Big((\del_{\tau}\del_{\rho}^lv)^2+(\del_{\rho}^{l+1}v_{\rho})^2+(\del_{\rho}^{l}v_{\rho})^2\Big)d\rho,\\
&\sum_{l=l_1+l_2}\Big|\int_0^{\sigma}\del_{\rho}^{l_1}\Big(-4\kappa^2+a_5(w)\Big)\del_{\rho}^{l_2}v\Big(\del_{\tau}\del_{\rho}^lv-\mu_1\del_{\rho}^{l+1}v+\mu_2\del_{\rho}^{l}v\Big)d\rho\Big|\\
&\quad\quad\quad\lesssim \eps C_{\kappa,\sigma}l\int_0^{\sigma}\Big((\del_{\tau}\del_{\rho}^lv)^2+(\del_{\rho}^{l+1}v_{\rho})^2+(\del_{\rho}^{l}v_{\rho})^2\Big)d\rho,
\endaligned
\ee
thus, using (\ref{YAE3-14})-(\ref{YAE3-14R1}), it holds
\bel{YAE3-15R1}
\aligned
&\int_0^{\tau}\int_0^{\sigma}\Big(\del_{\tau}\del_{\rho}^lv-\mu_1\del_{\rho}^{l+1}v+\mu_2\del_{\rho}^{l}v\Big)\textbf{f}_ld\rho d\tau\\
&\lesssim \eps C_{\kappa,\sigma}l\Big[\int_0^{\tau}\int_0^{\sigma}\Big((\del_{\tau}\del_{\rho}^lv)^2+(\del_{\rho}^{l+1}v)^2+(\del_{\rho}^{l}v)^2\Big)d\rho d\tau+\int_0^{\sigma}(v_1^{(l)})^2d\sigma\Big].
\endaligned
\ee

Hence, by (\ref{YAE3-15R1}) and $\HH^{l}\subset\HH^{l-1}$,
we can use the similar method of getting (\ref{YAE3-6}) to derive
\begin{eqnarray}
\label{YAE3-15RRRRR0}
&&{d\over d\tau}\int_0^{\sigma}\Big[{1\over2}\Big(4(\mu_2-1)\kappa^2-\mu_2+\mu_2(\kappa-1)^2\rho^2+\mu_2 a_2(w)+a_5(w)\Big)(\del_{\rho}^lv)^2\\
&&+\Big(1+(\kappa^2-1)\rho^2+a_0(w)\Big)\Big(\del_{\rho}^l\del_{\tau}v(\mu_2 \del_{\rho}^lv-\mu_1\del_{\rho}^{l+1}v)+{1\over2}(\del_{\rho}^l\del_{\tau}v)^2\Big)\\
&&+\mu_2\Big(2\rho(1-\kappa^2)(1-\rho^2)+a_3(w)\Big)\del_{\rho}^{l+1}v\del_{\rho}^lv\\
&&+{1\over2}\Big((1-\kappa^2)(1-\rho^2)(1-2\mu_1\rho-\rho^2)+a_1(w)-\mu_1a_3(w)\Big)(\del_{\rho}^{l+1}v)^2\Big]d\rho\\
&&+C_{\kappa,\sigma,\eps,\mu_1,\mu_2}\int_0^{\sigma}\Big((\del_{\rho}^l\del_{\tau}v)^2+(\del_{\rho}^{l+1}v)^2+(\del_{\rho}^lv)^2\Big)d\rho\lesssim \int_0^{\sigma}(v_1^{(l)})^2d\sigma,
\end{eqnarray}
furthermore, for the fixed constants $\mu_1,\mu_2$, integrating (\ref{YAE3-15RRRRR0}) over $(0,\tau)$, then we use Young's inequality to derive
\bel{YAE3-15}
\aligned
\int_0^{\sigma}\Big((\del_{\tau}\del_{\rho}^lv)^2&+(\del_{\rho}^{l+1}v)^2+(\del_{\rho}^{l}v)^2\Big)d\rho+C_{\kappa,\eps,\sigma,l}\int_0^{\tau}\int_0^{\sigma}\Big((\del_{\tau}\del_{\rho}^lv)^2+(\del_{\rho}^{l+1}v)^2+(\del_{\rho}^{l}v)^2\Big)d\rho d\tau\\
&\lesssim C_{\kappa,\eps,\sigma,l}\int_0^{\sigma}\Big((\del^{l}_{\rho}\del_{\tau}v(0,\rho))^2+(v_1^{(l)})^2+v_0^{(l)}\Big)d\rho,
\endaligned
\ee
where $C_{\kappa,\eps,\sigma,l}$ is a positive constant depending on $\kappa$, $\eps$, $\sigma$ and $l$.

Therefore, by (\ref{YAE3-15}), we apply Gronwall's inequality to obtain
$$
\int_0^{\sigma}\Big((\del_{\tau}\del_{\rho}^lv)^2+(\del_{\rho}^{l+1}v)^2+(\del_{\rho}^{l}v)^2\Big)d\rho\lesssim e^{-C_{\kappa,\eps,\sigma,l}\tau}\int_0^{\sigma}\Big((\del^{l}_{\rho}\del_{\tau}v(0,\rho))^2+(v_1^{(l)})^2+v_0^{(l)}\Big)d\rho.
$$

\end{proof}

We now consider the linear problem (\ref{YAE3-2}) with an external force as follows
\bel{YAE3-16}
\aligned
&\Lcal[w] v=f(\tau,\rho),\quad \tau>0,\\
& v(0,\rho)=v_0,\quad v_{\tau}(0,\rho)=v_1,
\endaligned
\ee
with the boundary condition
$$
v(\cdot,0)=v(\cdot,\sigma)=0,\quad v_{\rho}(\cdot,0)=v_{\rho}(\cdot,\sigma)=0.
$$

Similar to (\ref{YAE3-3R1}) in Lemma 3.7 and (\ref{YAE3-8R1}) in Lemma 3.8, we conclude the following result.

\begin{lemma}
Assume that  $w\in\Bcal_{\eps,l}$ and $f(\tau,\rho)\in\HH^l((0,\sigma])$. Then there is a postive constant $\sigma$ such that for any $\rho\in(0,\sigma)$, the solution of (\ref{YAE3-16}) satisfying
\bel{YAE3-16R1}
\int_0^{\sigma}\Big((\del_{\tau}\del_{\rho}^lv)^2+(\del_{\rho}^{l+1}v)^2+(\del_{\rho}^{l}v)^2\Big)d\rho\lesssim e^{-C_{\kappa,\eps,\sigma,l}\tau}\int_0^{\sigma}\Big((\del^{l}_{\rho}\del_{\tau}v(0,\rho))^2+(v_1^{(l)})^2+(v_0^{(l)})^2+(\del_{\rho}^lf)^2\Big)d\rho,~\forall \tau>0,
\ee
where $C_{\kappa,\eps,\sigma,l}$ is a positive constant depending on $\kappa$, $\eps$, $\sigma$ and $l$.
\end{lemma}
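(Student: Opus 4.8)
The strategy is to re-run the weighted energy argument of Lemmas 3.7 and 3.8 for the inhomogeneous problem, the only genuinely new ingredient being a Young-type estimate of the forcing term. First I would apply $\del_\rho^l$ to equation (\ref{YAE3-16}); this yields exactly the differentiated equation (\ref{YAE3-7}) but with right-hand side $\del_\rho^l f+\textbf{f}_l$ in place of $\textbf{f}_l$, where $\textbf{f}_l$ is the commutator (\ref{YAE3-7R1}) already treated in Lemma 3.8. The boundary conditions $\del_\rho^l v(\cdot,0)=\del_\rho^l v(\cdot,\sigma)=0$ and $\del_\rho^{l+1}v(\cdot,0)=\del_\rho^{l+1}v(\cdot,\sigma)=0$ are unchanged, so every boundary contribution produced by the integrations by parts below vanishes, and the singular factor $\rho^{-1}$ is dealt with by de l'H\^{o}pital's rule (\ref{E3-33}) exactly as before.

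Next I would multiply the differentiated equation by the multiplier $\del_\tau\del_\rho^l v-\mu_1\del_\rho^{l+1}v+\mu_2\del_\rho^l v$ with $0<\mu_1<1<\mu_2$ and integrate over $(0,\sigma]$. This reproduces the identity (\ref{YAE3-8}) verbatim, together with one additional term on the right, namely $\int_0^\sigma\big(\del_\tau\del_\rho^l v-\mu_1\del_\rho^{l+1}v+\mu_2\del_\rho^l v\big)\,\del_\rho^l f\,d\rho$. For the commutator piece $\textbf{f}_l$ I would quote verbatim the bounds (\ref{YAE3-12})--(\ref{YAE3-15R1}): since $w\in\Bcal_{\eps,l}$, its contribution is controlled by $\eps C_{\kappa,\sigma}l$ times the $\HH^l$-energy, plus the harmless data term. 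For the new forcing term one simply uses Cauchy--Schwarz and Young's inequality,
\begin{align*}
\Big|\int_0^\sigma\big(\del_\tau\del_\rho^l v-\mu_1\del_\rho^{l+1}v+\mu_2\del_\rho^l v\big)\,\del_\rho^l f\,d\rho\Big|
&\leq\eta\int_0^\sigma\Big((\del_\tau\del_\rho^l v)^2+(\del_\rho^{l+1}v)^2+(\del_\rho^l v)^2\Big)d\rho\\
&\quad+C_{\eta,\mu_1,\mu_2}\int_0^\sigma(\del_\rho^l f)^2\,d\rho,
\end{align*}
and then chooses $\eta$, and keeps $\eps$, small enough that the first term on the right is absorbed by the dissipative term $C_{\kappa,\sigma,\eps,\mu_1,\mu_2}\int_0^\sigma\big((\del_\rho^l\del_\tau v)^2+(\del_\rho^{l+1}v)^2+(\del_\rho^l v)^2\big)d\rho$ already present on the left, exactly as in (\ref{YAE3-6}) and (\ref{YAE3-15RRRRR0}).

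Carrying this out gives the analogue of (\ref{YAE3-15RRRRR0}): a differential inequality $\frac{d}{d\tau}E_l(\tau)+C_{\kappa,\eps,\sigma,l}\int_0^\sigma\big((\del_\tau\del_\rho^l v)^2+(\del_\rho^{l+1}v)^2+(\del_\rho^l v)^2\big)d\rho\lesssim\int_0^\sigma(\del_\rho^l f)^2\,d\rho+\int_0^\sigma(v_1^{(l)})^2\,d\rho$, where, for $0<\mu_1<1<\mu_2$ and $\eps\ll1$, the functional $E_l(\tau)$ (the bracketed quantity in (\ref{YAE3-8}), integrated in $\rho$) is equivalent to $\int_0^\sigma\big((\del_\tau\del_\rho^l v)^2+(\del_\rho^{l+1}v)^2+(\del_\rho^l v)^2\big)d\rho$. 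Integrating in $\tau$, using Young's inequality and then Gronwall's inequality, one obtains
\[
\int_0^\sigma\Big((\del_\tau\del_\rho^l v)^2+(\del_\rho^{l+1}v)^2+(\del_\rho^l v)^2\Big)d\rho\lesssim e^{-C_{\kappa,\eps,\sigma,l}\tau}\int_0^\sigma\Big((\del^l_\rho\del_\tau v(0,\rho))^2+(v_1^{(l)})^2+(v_0^{(l)})^2+(\del_\rho^l f)^2\Big)d\rho,
\]
which is (\ref{YAE3-16R1}); here the $\del_\rho^l f$ term is understood through the forcing norm, and in the Nash--Moser scheme of Section 4 the relevant $f$ itself decays exponentially in $\tau$, so nothing is lost. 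The main obstacle is not the forcing term — Young's inequality disposes of it immediately — but checking that, after the $\del_\rho^l f$ contribution has been absorbed, the surviving dissipation constant $C_{\kappa,\eps,\sigma,l}$ is still strictly positive and that the energy functional $E_l$ remains coercive once perturbed by the $a_i(w)$-terms; this amounts to a quantitative smallness condition linking $\eps$, $\eta$, $\mu_1$, $\mu_2$, $\kappa$, $\sigma$ that must be compatible with the ranges $0<\mu_1<1<\mu_2$ and $\kappa\sim1$, $\kappa<1$ already used, while the singular $\rho^{-1}$ terms and the commutator $\textbf{f}_l$ require no new idea beyond the computations of Lemmas 3.7--3.8.
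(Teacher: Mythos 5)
Your proposal is correct and follows essentially the same route as the paper, which gives no detailed argument for this lemma but simply declares it ``similar to'' the $\HH^{1}$ and $\HH^{l}$ decay estimates of the preceding two lemmas; the only new ingredient --- absorbing the $\del_{\rho}^{l}f$ term by Cauchy--Schwarz and Young's inequality into the dissipation before invoking Gronwall --- is exactly what you supply. Your closing caveat that the stated $e^{-C_{\kappa,\eps,\sigma,l}\tau}$ prefactor on the $(\del_{\rho}^{l}f)^{2}$ term is only meaningful when $f$ itself decays in $\tau$ (as it does in the Nash--Moser iteration of Section 4) is a fair reading of a sloppiness already present in the paper's statement, not a defect of your argument.
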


Furthermore, we derive the existence of result on the problem (\ref{YAE3-16}).

\begin{proposition}
Assume that  $w\in\Bcal_{\eps,l}$ and $f(\tau,\rho)\in\CC^2((0,\infty);\HH^l(\Omega))$. Then 
equation (\ref{YAE3-16}) admits a unique solution 
$$
v(\tau,\rho)\in\Ccal^l_{1}:=\bigcap_{i= 0}^1\CC^i((0,\infty);\HH^{l-i}(\Omega)).
$$
 Moreover, there is
\bel{YAE3-17}
\|v(t,x)\|_{\Ccal^l_{1}}\leq\|(v_0,v_1)\|_{\HH^l\times\HH^{l-1}}+\|f(\tau,\rho)\|_{\Ccal^l_{1}}.
\ee

\end{proposition}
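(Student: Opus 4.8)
The plan is to establish Proposition 3.3 by combining the a priori energy estimates already proven (Lemmas 3.7–3.9, in particular the exponential-decay estimate \eqref{YAE3-16R1}) with a Galerkin-type approximation scheme to produce the solution, and then use the uniform bounds together with weak/strong compactness to pass to the limit and recover membership in $\Ccal^l_1$. Since the linear operator $\Lcal[w]$ in \eqref{YAE3-1} is, for $w\in\Bcal_{\eps,l}$ with $\eps$ small, a uniformly well-posed damped wave operator on the bounded interval $(0,\sigma]$ with the positivity $4\kappa^2-1+(\kappa-1)^2\rho^2+a_2(w)>0$ and the uniform ellipticity $(1-\kappa^2)(1-\rho^2)^2+a_1(w)\gtrsim 1$ (both guaranteed because $a_i(w)\sim\eps\ll1$), the Cauchy problem \eqref{YAE3-16} with the stated Dirichlet-type boundary conditions $v(\cdot,0)=v(\cdot,\sigma)=0$, $v_\rho(\cdot,0)=v_\rho(\cdot,\sigma)=0$ is a standard linear hyperbolic problem once one has closed the $\HH^l$ energy identity.

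First I would set up the approximation: choose a basis of $\HH^l(\Omega)$ adapted to the boundary conditions (for instance eigenfunctions of a fixed, $\tau$-independent symmetric elliptic operator on $(0,\sigma]$ with the same boundary data), project \eqref{YAE3-16} onto the first $N$ modes to get a linear ODE system in $\tau$ with $w$- and $\tau$-dependent coefficients, and solve it for each $N$ by the Picard–Lindelöf theorem; the coefficients are continuous in $\tau$ because $w\in\Ccal^l_1$ and $f\in\CC^2((0,\infty);\HH^l(\Omega))$, so the approximate solutions $v^{(N)}$ exist globally in $\tau$. Second, I would run the energy computation of Lemmas 3.7–3.9 on $v^{(N)}$ (the multiplier $\del_\tau\del_\rho^l v^{(N)}-\mu_1\del_\rho^{l+1}v^{(N)}+\mu_2\del_\rho^l v^{(N)}$ with $0<\mu_1<1<\mu_2$ is admissible at the Galerkin level once the basis respects the boundary conditions), obtaining the $N$-uniform bound \eqref{YAE3-16R1} together with its lower-order analogues; this yields $v^{(N)}$ bounded in $\bigcap_{i=0}^1\CC^i((0,\infty);\HH^{l-i}(\Omega))$ uniformly in $N$, and, using the equation itself to control $\del_{\tau\tau}v^{(N)}$, also bounded in $\CC^2((0,\infty);\HH^{l-2}(\Omega))$. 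Third, I would extract a subsequence converging weakly-$*$ in these spaces and, by Aubin–Lions, strongly in $\CC^1_{loc}((0,\infty);\HH^{l-1-\delta})$, which is enough to pass to the limit in the linear terms of \eqref{YAE3-16} and identify the limit $v$ as a solution with the correct initial data $v(0,\cdot)=v_0$, $v_\tau(0,\cdot)=v_1$ and boundary conditions. Uniqueness follows immediately from the energy estimate \eqref{YAE3-16R1} applied to the difference of two solutions with $f=0$, $v_0=v_1=0$. Finally, the bound \eqref{YAE3-17} is just the $\tau$-supremum of \eqref{YAE3-16R1} and its lower-order companions, using $e^{-C_{\kappa,\eps,\sigma,l}\tau}\le 1$ and summing over derivative orders $i=0,1$.

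The main obstacle I anticipate is not the existence argument but the regularity bookkeeping needed to make the $\HH^l$-energy identity genuinely closed and $N$-uniform: the operator has the singular coefficient $\rho^{-1}$ (from the $-(1-\kappa^2)(1-\rho^2)\rho^{-1}v_\rho$ term and from $\rho^{-1}$-factors inside $a_4(w)$, $a_5(w)$), so one must verify that the boundary terms produced by integration by parts at $\rho\to 0^+$ vanish — this is exactly where the de l'Hôpital relation \eqref{E3-33}, i.e. $\lim_{\rho\to 0^+}(u_1)_\rho(\rho)/\rho=(u_1)_{\rho\rho}(0)$, and the evenness condition $v^{(2j+1)}(0)=0$ are used, and it must be checked at each derivative level $\del_\rho^l$ consistently with the imposed boundary conditions $\del_\rho^l v(\cdot,0)=\del_\rho^{l+1}v(\cdot,0)=0$. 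One also has to confirm that the commutator terms $\textbf{f}_l$ in \eqref{YAE3-7R1}, which contain up to $l+2$ $\rho$-derivatives of $v$ multiplied by $l_1\ge 1$ derivatives of the smooth-in-$\rho$ and $\HH^l$-in-$w$ coefficients, are controlled by $\eps C_{\kappa,\sigma}\,l$ times the top-order energy as asserted in \eqref{YAE3-15R1} — this is the place where $w\in\Bcal_{\eps,l}$ with $l\ge 2$ (so that $\HH^{l}\hookrightarrow\CC^0$ and products stay in $\HH^l$) and the smallness of $\eps$ are both essential, since the $\eps C_{\kappa,\sigma}l$ factor must be absorbed into the good dissipative terms $C_{\kappa,\sigma,\eps,\mu_1,\mu_2}\int_0^\sigma(\cdots)d\rho$ coming from $A_1,A_2,A_3$. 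Once those absorptions are verified the rest is routine, and \eqref{YAE3-17} follows.
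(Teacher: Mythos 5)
Your argument is sound and closes every step that needs closing, but it takes a genuinely different route from the paper. You build the solution by a Galerkin scheme: project onto finite-dimensional subspaces respecting the boundary and parity conditions, solve the resulting linear ODE systems in $\tau$ by Picard--Lindel\"of, run the $\HH^l$-energy identity of Lemmas 3.8--3.9 at the Galerkin level to get $N$-uniform bounds, then extract a weak-$*$/Aubin--Lions limit. The paper instead rewrites \eqref{YAE3-16} as a first-order system $\partial_\tau h + \mathcal{A}(\tau,\rho)h = F$ and, citing Sogge's book, runs a Duhamel-type iteration $h^{(m)} = h_0 - \int_0^\tau\bigl(\mathcal{A}(s,\rho)h^{(m-1)} + F(s,\rho)\bigr)\,d\tau$, proving the iterates form a Cauchy sequence in $\Ccal^l_1$ by means of the same a priori estimate \eqref{YAE3-16R1}, and then extends the local solution globally using the exponential decay. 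Both approaches lean on exactly the same energy machinery from Lemmas 3.7--3.9 for the quantitative bounds and for uniqueness. Your Galerkin route is arguably cleaner near the singular coefficient $\rho^{-1}$, since the finite-dimensional approximations are smooth and the boundary-term vanishing at $\rho\to 0^+$ can be checked once and for all on the basis; the paper's iteration is shorter to write down but implicitly relies on the reader supplying the usual hyperbolic fixed-point bookkeeping from Sogge. You also correctly flag the two real pressure points --- closing the $\HH^l$-identity past the $\rho^{-1}$ singularity using \eqref{E3-33} and the parity conditions, and absorbing the commutator $\textbf{f}_l$ via smallness of $\eps$ --- which are exactly the ingredients the paper's Lemma 3.9 supplies.

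One small caveat: your final claim that \eqref{YAE3-17} is ``just the $\tau$-supremum of \eqref{YAE3-16R1}'' with $e^{-C\tau}\le 1$ recovers the $i=0$ half of the $\Ccal^l_1$-norm directly, but the $i=1$ piece $\sup_\tau\|\partial_\tau v\|_{\HH^{l-1}}$ needs the $\HH^{l-1}$-level version of the estimate applied to $\partial_\tau v$ (or equivalently reading off the $\partial_\tau\partial_\rho^{l-1}v$ component of \eqref{YAE3-16R1} at level $l-1$); this is a routine consequence of the same lemmas, but it is worth saying explicitly that you are ``summing over derivative orders $i=0,1$'' by invoking the estimate at two different levels, not a single one.
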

\begin{proof}
We first prove the local existence of solution for (\ref{YAE3-16}), then using the decay in time of solution given in Lemma 3.9, the local solution can be extended into the global solution of (\ref{YAE3-16}).
Since $\Big((1-\kappa^2)(1-\rho^2)^2+a_1(w)\Big)>0$ in (\ref{YAE3-1}), the linearized problem (\ref{YAE3-16}) is a strictly hyperbolic linear equation. Thus we can take a standard fixed point iteration process. Let $h= (v,v_{\tau})$. Then linearized equation (\ref{YAE3-16}) can be rewritten as
$$
\del_th+\mathcal{A}(\tau,\rho)h=F(\tau,\rho),
$$
where $F(\tau,\rho)= (0,f(\tau,\rho))$ and the matrix $\mathcal{A}(\tau,\rho)$ is 
\begin{eqnarray*}
\mathcal{A}(\tau,\rho):=\left(
\begin{array}{ccc}
0&1\\
-A_{1}\del_{\rho\rho}+A_2\del_{\rho}+A_3& A_4\del_{\rho}+A_5
\end{array}
\right),
\end{eqnarray*}
and the coefficients
$$
\aligned
&A_1:=\Big(1+(\kappa^2-1)\rho^2+a_0(w)\Big)^{-1}\Big((1-\kappa^2)(1-\rho^2)^2+a_1(w)\Big),\\
&A_2:=\Big(1+(\kappa^2-1)\rho^2+a_0(w)\Big)^{-1}\Big(-(1-\kappa^2)(1-\rho^2)\rho^{-1}+a_4(w)\Big),\\
&A_3:=\Big(1+(\kappa^2-1)\rho^2+a_0(w)\Big)^{-1}\Big(-4\kappa^2+a_5(w)\Big),\\
&A_4:=\Big(1+(\kappa^2-1)\rho^2+a_0(w)\Big)^{-1}\Big(2\rho(1-\kappa^2)(1-\rho^2)+a_3(w)\Big),\\
&A_5:=\Big(1+(\kappa^2-1)\rho^2+a_0(w)\Big)^{-1}\Big(4\kappa^2-1+(\kappa-1)^2\rho^2+a_2(w)\Big).
\endaligned
$$

Note $w\in\Bcal_{\eps,l}$. Following \cite{Sog}, by
the standard fixed point iteration and a priori estimate (\ref{YAE3-16R1}) in Lemma 3.9, we obtain the approximation problem
$$
h^{(m)}=h_0-\int_0^{\tau}\Big(\mathcal{A}(s,\rho)h^{(m-1)}+F(s,\rho)\Big)d\tau
$$
has a Cauchy sequence $\{h^{(m)}\}_{m\in\mathbb{Z}^+}$ in $\mathcal{C}^l_{1}$, whose limit is $h(\tau,\rho)$ which solves the linearized equation (\ref{YAE3-16}) in $(0,T]$. Furhermore, by the decay in time estimate (\ref{YAE3-16R1}), the local solution $h(\tau,\rho)$ can be extended into a global solution of (\ref{YAE3-16}). 
The estimate (\ref{YAE3-17}) is directly from the estimate (\ref{YAE3-16R1}).

\end{proof}

%================================================================================

\section{Nonlinear stability of explicit lightlike self-similar solutions }\setcounter{equation}{0}

\subsection{The approximation scheme}
In this section, we will construct a solution of nonlinear equation (\ref{E2-7RR}) by using the Nash-Moser iteration scheme, which has been used in \cite{Yan,Yan1,YZ,ZY}.
Recall that we have chosen the initial approximation function as follows
$$
v^{(0)}(\tau,\rho)=(\kappa-1)\phi(\rho),
$$
where $\phi(\rho)$ is defined in (\ref{E2-2}).

We set
$$
v(\tau,\rho)=v^{(0)}(\tau,\rho)+w(\tau,\rho),
$$
where $w(\tau,\rho)$ satisfies the following non-autonomous nonlinear equation
\bel{YAE4-1}
\aligned
\Big(1+(\kappa^2-1)\rho^2\Big)w_{\tau\tau}&-(1-\kappa^2)(1-\rho^2)^2w_{\rho\rho}+\Big(4\kappa^2-1+(\kappa-1)^2\rho^2\Big)w_{\tau}\\
&\quad +2\rho(1-\kappa^2)(1-\rho^2)w_{\tau\rho}-(1-\kappa^2)(1-\rho^2)\rho^{-1}w_{\rho}-4\kappa^2w=f(\rho,w),
\endaligned
\ee
where
$$
\aligned
f(\rho,w):=&2\kappa(1-\rho^2)^{\frac{3}{2}} w_{\rho}^2-(1-\rho^2)w_{\rho}(w_{\tau\tau}+w_{\tau}-2w)\Big(w_{\rho}-2\kappa\rho(1-\rho^2)^{-\frac{1}{2}}\Big)\\
&\quad +\kappa(1-\rho^2)^{-\frac{1}{2}}(w-w_{\tau})^2-(1-\rho^2)w_{\rho\rho}\Big[(w-w_{\tau})^2+2\kappa(1-\rho^2)^{\frac{1}{2}}(w-w_{\tau})\Big]\\
&\quad -2\kappa\rho(1-\rho^2)^{\frac{1}{2}}w_{\tau\rho}(-w+w_{\tau})-2\kappa(1-\rho^2)^{\frac{3}{2}}w_{\rho}w_{\rho\tau}\\
&\quad +2(1-\rho^2)w_{\rho}w_{\rho\tau}(-w+w_{\tau})-(1-\rho^2)\rho^{-1}w_{\rho}\Big[(w-w_{\tau})^2+2\kappa(1-\rho^2)^{\frac{1}{2}}(w-w_{\tau})\Big]\\
&\quad +\kappa(1-\rho^2)^{\frac{1}{2}}(w-w_{\tau})^2-(\rho-\rho^{-1})(1-\rho^2)\Big(w_{\rho}^3-3\kappa\rho(1-\rho^2)^{-\frac{1}{2}}w_{\rho}^2\Big)\\
&\quad-2\kappa(1-\kappa^2)(1-\rho^2)^{-\frac{1}{2}}.
\endaligned
$$

Here we supplement equation (\ref{YAE4-1}) with the zero initial data for convenience of computation, i.e.
\bel{YAE4-1RRR1}
w(0,\rho)=0,\quad w_{\tau}(0,\rho)=0,
\ee
and the boundary condition
\bel{YAE4-1RRR2}
w(\cdot,0)=w(\cdot,\sigma)=0,\quad w_{\rho}(\cdot,0)=w_{\rho}(\cdot,\sigma)=0.
\ee

Since $\kappa\sim1$ and $\rho\in(0,\sigma]$, the non-autonomous term has the property
\bel{YAE4-2R1}
2\kappa(1-\kappa^2)(1-\rho^2)^{-\frac{1}{2}}\sim\eps_0\ll1.
\ee

We introduce a family of smooth operators possessing the following properties.
\begin{lemma}\cite{Alin,H1}
There is a family $\{\Pi_{\theta}\}_{\theta\geq1}$ of smoothing operators in the space $\mathbb{H}^{k}((0,\sigma])$ acting on the class of functions such that
\bel{E4-0}
\aligned
&\|\Pi_{\theta}w\|_{\HH_{k_1}}\leq C\theta^{(k_1-k_2)_+}\|w\|_{\HH^{k_2}},~~k_1,~k_2\geq0,\\
&\|\Pi_{\theta}w-w\|_{\HH^{k_1}}\leq C\theta^{k_1-k_2}\|w\|_{\HH^{k_2}},~~0\leq k_1\leq k_2,\\
&\|\frac{d}{d\theta}\Pi_{\theta}w\|_{\HH^{k_1}}\leq C\theta^{(k_1-k_2)_+-1}\|w\|_{\HH^{k_2}},~~k_1,~k_2\geq0,
\endaligned
\ee
where $C$ is a positive constant and $(k_1-k_2)_+:=\max(0,k_1-k_2)$. 
\end{lemma}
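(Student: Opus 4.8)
\emph{Proof sketch.} This is the standard Nash--Moser smoothing lemma, so the plan is to build $\Pi_{\theta}$ by Fourier truncation on the whole line and then transplant it to the interval $(0,\sigma]$; the real work is in the transplantation, not in any new estimate. No result beyond Plancherel's theorem and a classical extension theorem is needed.

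First I would reduce to the whole space. I fix the maximal order $N$ of smoothness the iteration in the next section will ever use, choose a single extension operator $E\colon\HH^{k}((0,\sigma])\to\HH^{k}(\RR)$ bounded for all $0\le k\le N$ with a constant independent of $k$ (a Stein- or Seeley-type extension), and let $R\colon\HH^{k}(\RR)\to\HH^{k}((0,\sigma])$ be the restriction. Because the boundary conditions attached to (\ref{YAE4-1RRR2}) force $w$ and $w_{\rho}$ to vanish on $\partial\Omega$, one may alternatively take $E$ to be a finite explicit combination of reflections across $\rho=0$ and $\rho=\sigma$, which keeps the image inside the class of functions considered; and by the norm equivalences for $\HH^{n}_{\rho}(\mathbb{B}_R^3)$ recorded earlier, $\HH^{k}((0,\sigma])$ is, up to equivalence of norms, the ordinary Sobolev scale on a bounded interval. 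Once $\{\Pi_{\theta}\}_{\theta\ge1}$ is available on $\RR$, I set $\Pi_{\theta}:=R\circ\Pi_{\theta}\circ E$ on $(0,\sigma]$.

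The construction on $\RR$ is classical: pick $\hat\chi\in\CC_c^{\infty}(\RR)$ with $\hat\chi\equiv1$ on $[-1,1]$ and $\mathrm{supp}\,\hat\chi\subset[-2,2]$, and define
$$
\widehat{\Pi_{\theta}u}(\xi)=\hat\chi(\xi/\theta)\,\hat u(\xi),\qquad\theta\ge1,
$$
i.e. $\Pi_{\theta}u=\theta\,\chi(\theta\,\cdot)\ast u$. Then I would verify the three bounds in (\ref{E4-0}) via Plancherel by splitting frequency space: on $\mathrm{supp}\,\hat\chi(\cdot/\theta)\subset\{|\xi|\le2\theta\}$ one has $(1+|\xi|^{2})^{k_1}\lesssim\theta^{2(k_1-k_2)_+}(1+|\xi|^{2})^{k_2}$ for $\theta\ge1$, which gives the first estimate; since $\hat\chi(\xi/\theta)-1$ vanishes on $\{|\xi|\le\theta\}$, on $\{|\xi|\ge\theta\}$ one has $(1+|\xi|^{2})^{k_1}\le\theta^{2(k_1-k_2)}(1+|\xi|^{2})^{k_2}$ whenever $k_1\le k_2$, which together with the boundedness of $\hat\chi$ gives the second; and $\partial_{\theta}[\hat\chi(\xi/\theta)]=-\theta^{-2}\xi\,\hat\chi'(\xi/\theta)$ is supported in the annulus $\{\theta\le|\xi|\le2\theta\}$ and of size $\Ocal(\theta^{-1})$ there, so the same split yields the extra factor $\theta^{-1}$ and hence the third. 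Finally the three bounds transfer through $R$ and $E$ because both are bounded on each $\HH^{k}$ in the relevant range uniformly in $\theta$.

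The one place where some care is needed --- and the step I would treat as the main obstacle --- is producing the single extension operator bounded simultaneously on the whole finite Sobolev scale and checking it is compatible with the radial weighted spaces $\HH^{n}_{\rho}(\mathbb{B}_R^3)$; but thanks to the norm equivalences already in force (near $\rho=0$ the weight $\rho^{i}$ is harmless for the even functions in play, near $\rho=\sigma$ it is comparable to a constant) this reduces to the classical interval case, so it is bookkeeping rather than genuine analysis. Accordingly I expect no substantive difficulty, which is precisely why the lemma is simply quoted from \cite{Alin,H1}.
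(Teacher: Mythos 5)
The paper provides no proof of this lemma --- it is quoted verbatim from \cite{Alin,H1} with a citation and no argument --- so there is nothing internal to compare against; the right standard is whether your reconstruction is the argument those references give, and it is. Fourier truncation $\widehat{\Pi_\theta u}=\hat\chi(\xi/\theta)\hat u$ on $\RR$, the three Plancherel splits you describe (low frequencies $|\xi|\le 2\theta$ for the first bound, high frequencies $|\xi|\ge\theta$ where $\hat\chi(\cdot/\theta)-1$ lives for the second, the dyadic annulus $\theta\le|\xi|\le 2\theta$ carrying $\partial_\theta\hat\chi(\xi/\theta)$ for the third), and transplantation by $R\circ\Pi_\theta\circ E$ through a Seeley/Stein extension is exactly Alinhac's and H\"ormander's construction, and each of your frequency-side inequalities is correct as written. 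Two small remarks. First, you flag the compatibility with the weighted radial spaces $\HH^n_\rho(\mathbb{B}_R^3)$ as the main obstacle, but that is moot here: those spaces are used only in Section 3 for the semigroup generation, whereas Lemma 4.1 and the whole Nash--Moser scheme in Section 4 work in $\HH^l(\Omega)$ with $\Omega=(0,\sigma]$ an ordinary interval (see the Notations paragraph), so the plain interval extension suffices. Second, "constant independent of $k$'' for the extension operator is more than you need --- a constant depending on $k$ but finite on a fixed finite scale $0\le k\le N$ is enough, and is what the standard extension theorems deliver; that weaker statement already propagates through $R\circ\Pi_\theta\circ E$ uniformly in $\theta$, which is all \eqref{E4-0} asserts.
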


In our iteration scheme, we set
$$
\theta=N_m=2^m,\quad \forall m= 0,1,2,\ldots,
$$
then by (\ref{E4-0}), it holds
\bel{E4-1}
\|\Pi_{N_m}v\|_{\HH^{k_1}}\lesssim N_m^{k_1-k_2}\|v\|_{\HH^{k_2}},\quad \forall k_1\geq k_2.
\ee

We consider the approximation problem of nonlinear equation (\ref{YAE4-1}) as follows
\bel{E4-3}
\aligned
\Jcal(w):=
\Big(1+(\kappa^2&-1)\rho^2\Big)w_{\tau\tau}-(1-\kappa^2)(1-\rho^2)^2w_{\rho\rho}+\Big(4\kappa^2-1+(\kappa-1)^2\rho^2\Big)w_{\tau}\\
&\quad +2\rho(1-\kappa^2)(1-\rho^2)w_{\tau\rho}-(1-\kappa^2)(1-\rho^2)\rho^{-1}w_{\rho}-4\kappa^2w-\Pi_{N_m}f(\rho,w).
\endaligned
\ee
We denote the $m$-th approximation solution of (\ref{E4-3}) by $w^{(m)}$. Then the error step $h^{(m)}$ is given by
$$
h^{(m)}:=w^{(m)}-w^{(m-1)},\quad for \quad m=1,2,\ldots,
$$
by which, we get
$$
w^{(m)}=w^{(0)}+\sum_{i=1}^mh^{(i)}.
$$
Our target is to prove that nonlinear equation (\ref{YAE4-1}) admits a global solution. It is equivalent to show the series $\sum_{i=1}^mh^{(i)}$ is convergence.

Linearizing nonlinear equation (\ref{YAE4-1}) around $h^{(m)}$, we get the linearized operator
$$
\aligned
&\Lcal[w^{(m-1)}](h^{(m)}):=\Big(1+(\kappa^2-1)\rho^2+a_0(w^{(m-1)})\Big)h^{(m)}_{\tau\tau}-\Big((1-\kappa^2)(1-\rho^2)^2+a_1(w^{(m-1)})\Big)h^{(m)}_{\rho\rho}\\
&\quad\quad\quad+\Big(4\kappa^2-1+(\kappa-1)^2\rho^2+a_2(w^{(m-1)})\Big)h^{(m)}_{\tau}+\Big(2\rho(1-\kappa^2)(1-\rho^2)+a_3(w^{(m-1)})\Big)h^{(m)}_{\tau\rho}\\
&\quad\quad\quad +\Big(-(1-\kappa^2)(1-\rho^2)\rho^{-1}+a_4(w^{(m-1)})\Big)h^{(m)}_{\rho}+\Big(-4\kappa^2+a_5(w^{(m-1)})\Big)h^{(m)},
\endaligned
$$
where the coefficients $a_0(w^{(m-1)})$, $a_1(w^{(m-1)})$, $a_2(w^{(m-1)})$, $a_3(w^{(m-1)})$, $a_4(w^{(m-1)})$ and $a_5(w^{(m-1)})$ are given in (\ref{YAE3-1RRR1}), but instead of $w$ and its derivatives on $\tau$ and $\rho$ by $w^{(m-1)}$ and its derivatives on $\tau$ and $\rho$ in those coefficients, respectively.

Let constant $l\geq2$.
We choose the approximation function $w^{(0)}\in\Ccal_1^l$ such that the error term
$$
E^{(0)}:=\Jcal(w^{(0)})
$$
satisfies
\bel{E4-5}
\aligned
&w^{(0)}\neq0,\quad and \quad\|w^{(0)}\|_{\Ccal_1^{l}}\lesssim \eps_0<\eps,\\
&\|E^{(0)}\|_{\Ccal_1^{l}}\lesssim\eps_0<\eps,\\
&w^{(0)}(\cdot,\rho)|_{\rho\in\del\Omega}=w_{\rho}^{(0)}(\cdot,\rho)|_{\rho\in\del\Omega}=0.
\endaligned
\ee

Since the non-autonomous term satisfies (\ref{YAE4-2R1}), 
it is easy to check that (\ref{E4-5}) holds for a sufficient small $\eps_0$.

The $m$-th error terms is defined by
\bel{E4-6}
R(h^{(m)}):=\Jcal(w^{(m-1)}+h^{(m)})-\Jcal(w^{(m-1)})-\Lcal[w^{(m-1)}](h^{(m)}),
\ee
which is also the nonlinear term in approximation equation (\ref{E4-3}) at $w^{(m-1)}$. The exact form of nonlinear term (\ref{E4-6}) is very complicated, here we does not write it down. 

\begin{lemma}
Let $l\geq2$.
Assume that $w^{(m-1)}\in\Bcal_{\eps,l}$. Then it holds
\bel{E4-9R1}
\|R(h^{(m)})\|_{\Ccal_1^l}\lesssim N_m^4\|h^{(m)}\|^2_{\Ccal_1^{l}},\quad \forall \tau\in\RR^+.
\ee
\end{lemma}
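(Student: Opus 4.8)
The first step is to identify $R(h^{(m)})$ explicitly. The map $\Jcal$ in (\ref{E4-3}) is the sum of a fixed \emph{linear} differential operator $L$ and the term $-\Pi_{N_m}f(\rho,\cdot)$, and $\Lcal[w^{(m-1)}]$ is exactly the linearization of $\Jcal$ at $w^{(m-1)}$. Hence the linear parts cancel in the difference (\ref{E4-6}), and
\be
R(h^{(m)})=-\Pi_{N_m}\Big(f(\rho,w^{(m-1)}+h^{(m)})-f(\rho,w^{(m-1)})-D_wf(\rho,w^{(m-1)})[h^{(m)}]\Big)=:-\Pi_{N_m}Q,
\ee
so that $Q$ is the second–order Taylor remainder of $f(\rho,\cdot)$, which in integral form reads $Q=\int_0^1(1-s)\,D_w^2f(\rho,w^{(m-1)}+sh^{(m)})[h^{(m)},h^{(m)}]\,ds$. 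The plan is: (i) bound $Q$ and $\del_\tau Q$ in a low–order Sobolev space by $\|h^{(m)}\|_{\Ccal_1^l}^2$, uniformly in $s\in[0,1]$; (ii) recover the loss $N_m^4$ from the first smoothing estimate in (\ref{E4-0}), using that $\Pi_{N_m}$ acts only in $\rho$ and hence commutes with $\del_\tau$.

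For (i), the explicit expression for $f(\rho,w)$ shows it is a cubic polynomial in $(w,w_\tau,w_\rho,w_{\tau\tau},w_{\tau\rho},w_{\rho\rho})$ whose coefficients are products of functions bounded and smooth on the \emph{closed} interval $[0,\sigma]$ — powers of $(1-\rho^2)^{\pm1/2}$ and $(1-\rho^2)^{\pm3/2}$, harmless since $\sigma<1$ — times, in a few terms, the singular factor $\rho^{-1}$. In particular $D_w^2f$ is \emph{affine} in its base point, so $D_w^2f(\rho,w^{(m-1)}+sh^{(m)})$ is, in the norms in play, bounded by $1+\|w^{(m-1)}\|_{\Ccal_1^l}+\|h^{(m)}\|_{\Ccal_1^l}\lesssim1$ by $w^{(m-1)}\in\Bcal_{\eps,l}$ and smallness of $h^{(m)}$. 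Thus $Q$ is a finite sum of products of two factors, each a derivative of $h^{(m)}$ of order $\leq2$, times such a coefficient; each product is estimated in $\HH^{\max(l-4,0)}$ by the tame (Moser) product inequality and the embedding $\HH^l\hookrightarrow C^1$ valid for $l\geq2$, the behaviour at $\rho=0$ being controlled, as in Section 3, by the de l'H\^opital identity (\ref{E3-33}) and the boundary conditions $h^{(m)}(\cdot,0)=h^{(m)}_\rho(\cdot,0)=0$ from (\ref{YAE4-1RRR2}); the $\rho^{-1}$ weights are absorbed by Hardy's inequality using the same vanishing. This gives $\|Q\|_{\HH^{\max(l-4,0)}}+\|\del_\tau Q\|_{\HH^{\max(l-5,0)}}\lesssim\|h^{(m)}\|_{\Ccal_1^l}^2$, and applying the first line of (\ref{E4-0}) with an index gap of at most $4$ yields $\|R(h^{(m)})\|_{\Ccal_1^l}\lesssim N_m^4\|h^{(m)}\|_{\Ccal_1^l}^2$, which is (\ref{E4-9R1}).

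The main obstacle is step (i). Since $f$ already carries second–order derivatives of $w$, each of the two factors in $D_w^2f[h^{(m)},h^{(m)}]$ may carry two derivatives of $h^{(m)}$; this forces the product estimate down to regularity $\HH^{l-4}$ — precisely why one loses $N_m^4$ rather than $N_m^2$ — and one must verify that this low–order norm is still $\lesssim\|h^{(m)}\|_{\Ccal_1^l}^2$ for \emph{every} $l\geq2$, which requires the tame product inequality and $\HH^l\hookrightarrow C^1$ used together. A related subtlety concerns the $\del_\tau$ part of the $\Ccal_1^l$ norm: $Q$ contains $h^{(m)}_{\tau\tau}$, so $h^{(m)}_{\tau\tau}$ must be re-expressed through the linearized equation $\Lcal[w^{(m-1)}]h^{(m)}=\cdots$ before differentiating in $\tau$, lowering the worst $\tau$–derivatives back to the level controlled by $\Ccal_1^l$. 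A final, purely bookkeeping, difficulty is propagating the singular weights $\rho^{-1}$ through all the Leibniz terms and checking that Hardy's inequality, with the boundary conditions (\ref{YAE4-1RRR2}), absorbs them with no extra loss.
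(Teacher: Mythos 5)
Your approach is essentially the paper's, written out more carefully. The paper compresses the whole estimate into the single chain $\|R(h^{(m)})\|_{\Ccal_1^l}\lesssim\|h^{(m)}\|^2_{\Ccal_1^{l+2}}\lesssim N_m^4\|h^{(m)}\|^2_{\Ccal_1^{l}}$, appealing only to the cubicity of the nonlinearity, the fact that at most second-order $(\tau,\rho)$-derivatives enter $R(h^{(m)})$, the smallness $\|h^{(m)}\|_{\Ccal_1^l}\leq\eps$ to majorize all powers by the square, and the smoothing estimate (\ref{E4-1}) for the final step. Your ``estimate $Q$ low, then boost with $\Pi_{N_m}$'' chain $\|\Pi_{N_m}Q\|_{\HH^l}\lesssim N_m^4\|Q\|_{\HH^{l-4}}\lesssim N_m^4\|h^{(m)}\|^2_{\HH^l}$ is a cleaner bookkeeping of the same idea and makes transparent where the $N_m^4$ comes from. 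Two remarks. First, with $\Lcal[w]$ as literally defined in (\ref{YAE3-1}) (no mollifier inside the linearized operator), the difference in (\ref{E4-6}) is \emph{not} exactly $-\Pi_{N_m}Q$: one also picks up the mollification error $(I-\Pi_{N_m})\,D_wf(\rho,w^{(m-1)})[h^{(m)}]$, which is linear in $h^{(m)}$ and is controlled by the second line of (\ref{E4-0}) (a negative power of $N_m$) rather than by squaring $\|h^{(m)}\|$. The paper's one-line proof ignores this term as well, so your simplification matches its evident intent, but a complete argument would have to account for it separately. Second, the subtleties you single out — re-expressing $h^{(m)}_{\tau\tau}$ through the linearized equation before taking $\del_\tau$ (since $\Ccal_1^l$ does not control $h^{(m)}_{\tau\tau}$), invoking Hardy's inequality with the boundary conditions (\ref{YAE4-1RRR2}) to absorb the $\rho^{-1}$ weights, and the tame product inequalities with $\HH^l(\Omega)\hookrightarrow C^1$ — are genuine issues a detailed proof must carry through and which the paper's terse argument leaves implicit; your identification of them is correct.
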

\begin{proof}
We notice that the highest order of nonlinear term in (\ref{YAE4-1}) is $3$, and the highest order of derivatives on $\rho$ and $\tau$ in (\ref{E4-6})
is $2$. Since the solution of (\ref{E4-3}) should be constructed in $\Bcal_{\eps,l}$, it holds
$$
\|h^{(m)}\|_{\Ccal_1^{l}}\leq \eps\ll1,\quad\forall m\in\NN,
$$ 
which means that
$$
\|h^{(m)}\|^p_{\Ccal_1^{l}}\leq \|h^{(m)}\|^2_{\Ccal_1^{l}},\quad for\quad p\geq2.
$$

Applying (\ref{E4-1}) and Young's inequality to estimate each term in $R(h^{(m)})$, we obtain 
$$
\|R(h^{(m)})\|_{\Ccal_1^l}\lesssim \|h^{(m)}\|^2_{\Ccal_1^{l+2}}\lesssim N_m^4\|h^{(m)}\|^2_{\Ccal_1^{l}}.
$$

\end{proof}

The following Lemma is to construct the $m$-th approximation solution.

\begin{lemma}
Let $l\geq2$.
Assume that $w^{(m-1)}\in\Bcal_{\eps,l}$.
The linear problem
$$
\aligned
&\Lcal[w^{(m-1)}](h^{(m)})=E^{(m-1)},\\
&h^{(m)}(0,\rho)=0,\quad h^{(m)}_{\tau}(0,\rho)=0,
\endaligned
$$
with the boundary condition 
$$
h^{(m)}(\cdot,0)=h^{(m)}(\cdot,\sigma)=0,\quad h_{\rho}^{(m)}(\cdot,0)=h_{\rho}^{(m)}(\cdot,\sigma)=0,
$$
admits a solution $h^{(m)}\in\Ccal_1^{l}$ satisfying
\bel{E4-10R1}
\|h^{(m)}\|_{\Ccal_1^{l}}\lesssim \|E^{(m-1)}\|_{\Ccal_1^{l}},
\ee
where the error term
\bel{E4-10R2}
E^{(m-1)}:=\Jcal(w^{(m-1)})=R(h^{(m-1)}).
\ee
\end{lemma}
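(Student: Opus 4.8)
The plan is to produce $h^{(m)}$ as the solution furnished by the linear existence result established above (Proposition~3.3), applied to the strictly hyperbolic operator $\Lcal[w^{(m-1)}]$ with source $E^{(m-1)}$ and zero Cauchy data, and then to record the telescoping identity (\ref{E4-10R2}) by an easy induction on $m$. First I would verify that the hypotheses of Proposition~3.3 hold for $\Lcal[w^{(m-1)}]$. Since $w^{(m-1)}\in\Bcal_{\eps,l}$ with $\eps\ll1$, the coefficient $(1-\kappa^2)(1-\rho^2)^2+a_1(w^{(m-1)})$ is strictly positive on $(0,\sigma]$ — this is precisely the strict hyperbolicity already exploited in the proof of Proposition~3.3 — so $\Lcal[w^{(m-1)}]$ falls in the class treated there, and the decay estimates of Lemmas~3.7--3.9 are available for it. The prescribed boundary conditions $h^{(m)}(\cdot,0)=h^{(m)}(\cdot,\sigma)=0$ and $h^{(m)}_{\rho}(\cdot,0)=h^{(m)}_{\rho}(\cdot,\sigma)=0$ are exactly those under which the energy identities behind (\ref{YAE3-8R1}) and (\ref{YAE3-16R1}) were derived, so the solution produced by Proposition~3.3 automatically lives in that class.

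Second I would check that the source $E^{(m-1)}$ is admissible for Proposition~3.3. For $m\geq2$ we have $E^{(m-1)}=R(h^{(m-1)})$, and Lemma~4.2, applied at the previous index, gives $\|E^{(m-1)}\|_{\Ccal_1^{l}}\lesssim N_{m-1}^{4}\|h^{(m-1)}\|_{\Ccal_1^{l}}^{2}<\infty$; because $R$ is a polynomial nonlinearity in $w^{(m-1)}$ and its derivatives of order at most $2$ composed with the smoothing operator $\Pi_{N_{m-1}}$, the regularity of $h^{(m-1)}$ obtained at the previous stage, together with the equation it solves (which trades second time derivatives for spatial ones), propagates to $E^{(m-1)}\in\CC^2((0,\infty);\HH^l(\Omega))$. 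For $m=1$ the smallness and regularity of $E^{(0)}=\Jcal(w^{(0)})$ are exactly (\ref{E4-5}). With these facts in hand, Proposition~3.3 supplies a unique $h^{(m)}\in\Ccal_1^{l}$ solving $\Lcal[w^{(m-1)}](h^{(m)})=E^{(m-1)}$ with $h^{(m)}(0,\rho)=h^{(m)}_{\tau}(0,\rho)=0$ and the prescribed boundary conditions, and the bound (\ref{YAE3-17}) applied with vanishing initial data yields
\[
\|h^{(m)}\|_{\Ccal_1^{l}}\;\leq\;\|(0,0)\|_{\HH^l\times\HH^{l-1}}+\|E^{(m-1)}\|_{\Ccal_1^{l}}\;=\;\|E^{(m-1)}\|_{\Ccal_1^{l}},
\]
which is (\ref{E4-10R1}).

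The identity $E^{(m-1)}=\Jcal(w^{(m-1)})=R(h^{(m-1)})$ in (\ref{E4-10R2}) I would obtain by induction on $m$. The base case is the definition $E^{(0)}:=\Jcal(w^{(0)})$. For the inductive step one uses the Taylor-type splitting (\ref{E4-6}) of the smoothed operator $\Jcal$ about $w^{(m-2)}$ together with the fact that $h^{(m-1)}$ was constructed at the previous stage to absorb the previous error $E^{(m-2)}=\Jcal(w^{(m-2)})$: substituting $w^{(m-1)}=w^{(m-2)}+h^{(m-1)}$ into (\ref{E4-6}) and using the linearized equation satisfied by $h^{(m-1)}$ to cancel the term $\Jcal(w^{(m-2)})$ leaves $R(h^{(m-1)})=\Jcal(w^{(m-1)})$, which we call $E^{(m-1)}$. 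Here I keep the sign convention of (\ref{E4-3}) and (\ref{E4-6}), which is chosen precisely so that the scheme telescopes.

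I expect the main obstacle to be the second step — certifying that $E^{(m-1)}$ genuinely lies in $\CC^2((0,\infty);\HH^l(\Omega))$, and not merely in $\Ccal_1^{l}$, so that Proposition~3.3 is literally applicable; this requires bootstrapping the regularity of the previous iterate through the equation it solves, and it is exactly the reason the smoothing operators $\Pi_{N_m}$ (with the resulting $N_m^{4}$ loss recorded in Lemma~4.2) are inserted into the approximation equation (\ref{E4-3}). Once this admissibility is established, the lemma reduces to a direct invocation of Proposition~3.3 together with the elementary telescoping identity above.
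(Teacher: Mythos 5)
Your plan matches the paper's proof: define $h^{(m)}$ by cancelling the residual $E^{(m-1)}=\Jcal(w^{(m-1)})$ with the linearized operator $\Lcal[w^{(m-1)}]$, invoke Proposition~3.3 with zero Cauchy data and the stated boundary conditions, read (\ref{E4-10R1}) directly from (\ref{YAE3-17}), and obtain (\ref{E4-10R2}) from the Taylor-type splitting (\ref{E4-6}). (The paper's proof actually writes $\Jcal(w^{(m-1)})+\Lcal[w^{(m-1)}](h^{(m)})=0$, so there is a sign typo in the lemma's displayed equation; you correctly keep the convention that makes the scheme telescope.) The regularity caveat you flag at the end is a genuine one that the paper leaves unaddressed: Proposition~3.3 is stated for $f\in\CC^{2}((0,\infty);\HH^{l}(\Omega))$, whereas $h^{(m-1)}$ is produced only in $\Ccal_1^l=\bigcap_{i=0}^1\CC^i((0,\infty);\HH^{l-i})$, so $E^{(m-1)}=R(h^{(m-1)})$ is not obviously in the hypothesis class, and the paper invokes Proposition~3.3 without comment. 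Your proposal to bootstrap the missing time regularity through the equation $h^{(m-1)}$ solves, together with the smoothing $\Pi_{N_{m-1}}$ (whose $N_{m-1}^4$ price is already recorded in Lemma~4.2), is precisely the filler this step requires and is a real gap in the published argument rather than a defect in your plan.
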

\begin{proof}
Assume that $w^{(0)}$ satisfying (\ref{E4-5}). The $m-1$-th approximation solution is 
$$
w^{(m-1)}=w^{(0)}+\sum_{i=1}^{m-1}h^{(i)}.
$$
Then we will find the $m$-th approximation solution $w^{(m)}$, which is equivalent to find $h^{(m)}$ such that 
\bel{E4-7}
w^{(m)}=w^{(m-1)}+h^{(m)}.
\ee
Substituting (\ref{E4-7}) into (\ref{E4-3}), it holds
$$
\Jcal(w^{(m)})=\Jcal(w^{(m-1)})+\Lcal[w^{(m-1)}](h^{(m)})+R(h^{(m)}).
$$

Let 
$$
\Jcal(w^{(m-1)})+\Lcal[w^{(m-1)}](h^{(m)})=0,
$$
we supplement it with the zero initial data
$$
h^{(m)}(0,\rho)=0,\quad h^{(m)}_{\tau}(0,\rho)=0,
$$
and the boundary condition 
$$
h^{(m)}(\cdot,0)=h^{(m)}(\cdot,\sigma)=0,\quad h_{\rho}^{(m)}(\cdot,0)=h_{\rho}^{(m)}(\cdot,\sigma)=0.
$$

By Proposition 3.3, the zero initial data problem admits a solution $h^{(m)}\in\Ccal_1^{l}$. Furthermore, by (\ref{YAE3-17}), it satisfies
$$
\|h^{(m)}\|_{\Ccal_1^{l}}\lesssim \|\Jcal(w^{(m-1)})\|_{\Ccal_1^{l}}.
$$

Moreover, one can see the $m$-th error term $E^{(m)}$ such that
$$
E^{(m)}:=\Jcal(w^{(m)})=R(h^{(m)}).
$$
\end{proof}

\subsection{Convergence of the approximation scheme}

For some fixed $l>2$, let $2\leq\bar{k}<k_0\leq k\leq l$ and
$$
\aligned
&k_m:=\bar{k}+\frac{k-\bar{k}}{2^m},\\
&\alpha_{m+1}:=k_m-k_{m+1}=\frac{k-\bar{k}}{2^{m+1}},
\endaligned
$$
which gives that
\bel{EX1-1}
k_0>k_1>\ldots>k_m>k_{m+1}>\ldots.
\ee

\begin{proposition}
Equation (\ref{YAE4-1}) with the initial data (\ref{YAE4-1RRR1}) and boundary condition (\ref{YAE4-1RRR2}) admits a global solution 
$$
w^{(\infty)}(\tau,\rho)=w^{(0)}(\tau,\rho)+\sum_{m=1}^{\infty}h^{(m)}(\tau,\rho)\in\Ccal_1^{2},
$$
where $w^{(0)}\in\HH^l$ satisfies (\ref{E4-5}).

Moreover, it holds
$$
\|w^{(\infty)}(\tau,\rho)\|_{\Ccal_1^2}\lesssim\eps.
$$
\end{proposition}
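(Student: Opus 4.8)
The plan is to show that the Nash--Moser scheme of Section~4.1 converges: the iterates $w^{(m)}=w^{(0)}+\sum_{i=1}^{m}h^{(i)}$ form a Cauchy sequence in $\Ccal_1^2$ and the errors $E^{(m)}=\Jcal(w^{(m)})=R(h^{(m)})$ tend to $0$, so that the limit solves (\ref{YAE4-1}) with data (\ref{YAE4-1RRR1}) and boundary condition (\ref{YAE4-1RRR2}). Throughout one works in the decreasing scale of indices $k_0>k_1>\cdots\downarrow\bar k\ge2$ from (\ref{EX1-1}), invoking the solvability estimate (\ref{E4-10R1}) (equivalently Proposition~3.3) and the quadratic remainder estimate (\ref{E4-9R1}) at the regularity level $k_m$ rather than at the fixed level $l$, which is legitimate since both statements hold for a generic index $\ge 2$.

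The core is an induction on $m$ carrying the three bounds $\|h^{(m)}\|_{\Ccal_1^{k_m}}\le C\eps_0\gamma^{\,m-1}$, $\|E^{(m)}\|_{\Ccal_1^{k_m}}\le\eps_0\gamma^{\,m}$, and $\|w^{(m)}\|_{\Ccal_1^{k_m}}\le\eps$ (so $w^{(m)}\in\Bcal_{\eps,k_m}$), with $\gamma\in(0,1)$ and $\eps_0<\eps$ to be chosen; the base case is (\ref{E4-5}) together with $h^{(0)}:=0$. For the step, the solvability estimate at level $k_m$ applied to $\Lcal[w^{(m-1)}](h^{(m)})=E^{(m-1)}$ gives
\be
\|h^{(m)}\|_{\Ccal_1^{k_m}}\lesssim\|E^{(m-1)}\|_{\Ccal_1^{k_m}}\le\|E^{(m-1)}\|_{\Ccal_1^{k_{m-1}}}\le\eps_0\gamma^{\,m-1},
\ee
the middle step using $k_m<k_{m-1}$. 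Then (\ref{E4-10R2}) together with (\ref{E4-9R1}) — the smoothing inequality (\ref{E4-1}) being used to absorb the two extra derivatives carried by the $\Pi_{N_m}$--smoothed nonlinearity in $R(h^{(m)})$ into a factor $N_m^{2}$ — yields
\be
\|E^{(m)}\|_{\Ccal_1^{k_m}}=\|R(h^{(m)})\|_{\Ccal_1^{k_m}}\lesssim N_m^{4}\,\|h^{(m)}\|_{\Ccal_1^{k_m}}^{2}\lesssim N_m^{4}\,\eps_0^{2}\,\gamma^{2(m-1)},
\ee
and since $N_m=2^m$ this is $\le\eps_0\gamma^{m}$ as soon as $16\gamma\le1$ and $\eps_0$ is small enough that $16\,C'\eps_0\gamma^{-1}\le1$ for the implicit constant $C'$; the factor $(16\gamma)^{m-1}\le1$ handles $m\ge2$, and the case $m=1$ is precisely the defining smallness of $\eps_0$. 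Finally, since $k_j\ge k_{m-1}$ for $j\le m-1$, one has $\|w^{(m-1)}\|_{\Ccal_1^{k_{m-1}}}\le\|w^{(0)}\|_{\Ccal_1^{k_0}}+\sum_{j=1}^{m-1}\|h^{(j)}\|_{\Ccal_1^{k_j}}\lesssim\eps_0+\sum_{j\ge1}C\eps_0\gamma^{j-1}\lesssim\eps_0<\eps$, so $w^{(m-1)}\in\Bcal_{\eps,k_{m-1}}\subset\Bcal_{\eps,k_m}$ and the next step is admissible.

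With the induction closed, $\sum_{m\ge1}\|h^{(m)}\|_{\Ccal_1^{\bar k}}\le\sum_{m\ge1}\|h^{(m)}\|_{\Ccal_1^{k_m}}\lesssim\eps_0\sum_{m\ge1}\gamma^{m-1}<\infty$ (using $\bar k\le k_m$), so the partial sums converge in $\Ccal_1^{\bar k}\hookrightarrow\Ccal_1^2$; the limit $w^{(\infty)}=w^{(0)}+\sum_{m\ge1}h^{(m)}$ satisfies $\|w^{(\infty)}\|_{\Ccal_1^2}\lesssim\eps_0<\eps$. Since $\|E^{(m)}\|_{\Ccal_1^{k_m}}\le\eps_0\gamma^m\to0$ and $\Pi_{N_m}\to\mathrm{id}$ strongly on the relevant Sobolev spaces, passing to the limit in $\Jcal(w^{(m)})=E^{(m)}$, i.e. in (\ref{E4-3}), and using continuity of $w\mapsto f(\rho,w)$ on $\Bcal_{\eps,l}$, shows that $w^{(\infty)}$ solves the unsmoothed equation (\ref{YAE4-1}); the zero data (\ref{YAE4-1RRR1}) and the boundary condition (\ref{YAE4-1RRR2}) pass to the limit since every $h^{(m)}$ carries them.

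I expect the delicate point to be the balance in the error step: the derivative loss $N_m^{4}=2^{4m}$ in (\ref{E4-9R1}) can only be beaten because $R$ is \emph{quadratic} in $h^{(m)}$ and because the smoothing operators $\Pi_{N_m}$ allow regularity to be redistributed; making the choices of $\gamma$, $\eps_0$ and the gap $k_0-\bar k$ mutually consistent, and verifying the routine but lengthy tame estimates behind (\ref{E4-9R1}), is where the real work lies, everything else reducing to direct applications of Proposition~3.3 and the smoothing estimates (\ref{E4-0})--(\ref{E4-1}).
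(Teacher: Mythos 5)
Your proposal is correct and follows the same Nash--Moser architecture the paper uses: induction on $m$ carrying a bound on $h^{(m)}$, a bound on the error $E^{(m)}$, and membership $w^{(m)}\in\Bcal_{\eps,k_m}$, closed by the linear solvability estimate (\ref{E4-10R1}) and the quadratic remainder estimate (\ref{E4-9R1}) along a decreasing ladder of regularity indices $k_m$. The only non-cosmetic deviation is the convergence ansatz: the paper propagates a doubly-exponential bound $\|E^{(m)}\|_{\Ccal_1^{k_m}}<d^{2^{m+1}}$ (the genuine Newton rate) by unrolling the recursion down to $(N_0^8\|E^{(0)}\|)^{2^m}$, whereas you propagate a plain geometric bound $\|E^{(m)}\|_{\Ccal_1^{k_m}}\le\eps_0\gamma^m$; your balance check $16^m\eps_0\gamma^{m-2}\le C^{-1}$ (valid once $16\gamma\le1$ and $\eps_0$ small) shows this closes because the derivative loss $N_m^4=16^m$ is merely geometric in $m$ and the quadratic gain $\gamma^{2(m-1)}\to\gamma^m$ leaves a spare factor $\gamma^{m-2}$. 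Both ansätze give a summable series $\sum_m\|h^{(m)}\|_{\Ccal_1^{\bar k}}$ with $\bar k\ge2$ and hence $\Ccal_1^2$-convergence of $w^{(m)}$; the geometric one is slightly more wasteful but more elementary to verify, the paper's is sharper in that it records the true quadratic convergence rate. One small slip in your prose: you write that (\ref{E4-1}) absorbs the two extra derivatives ``into a factor $N_m^2$''; that is the per-factor cost $\|h^{(m)}\|_{\Ccal_1^{l+2}}\lesssim N_m^2\|h^{(m)}\|_{\Ccal_1^l}$, but since $R$ is quadratic the total loss is $N_m^4$ as you correctly use in the display and as stated in (\ref{E4-9R1}).
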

\begin{proof}
The proof is based on the induction. Note that $N_m=N_0^m$ with $N_0>1$. $\forall m=1,2,\ldots$, we claim that there exists a sufficient small positive constant $d$ such that
\bel{E4-12}
\aligned
&\|h^{(m)}\|_{\Ccal_1^{k_m}}<d^{2^m},\\
&\|E^{(m-1)}\|_{\Ccal_1^{k_m}}<d^{2^{m+1}},\\
&w^{(m)}\in\Bcal_{\eps,k_m}.
\endaligned
\ee

For the case of $m=1$, we recall that the assumption (\ref{E4-5}) on $w^{(0)}$, i.e. 
$$
\aligned
&w^{(0)}\neq0,\quad and \quad\|w^{(0)}\|_{\Ccal_1^{l}}\lesssim \eps_0,\\
&\|E^{(0)}\|_{\Ccal_1^{l}}\lesssim\eps_0.
\endaligned
$$
So by (\ref{E4-10R1}), let $0<\eps_0<N_0^{-8}d^2<{\eps\over2}\ll1$, it holds
$$
\|h^{(1)}\|_{\Ccal_1^{k_1}}\lesssim \|E^{(0)}\|_{\Ccal_1^{k_0}}\lesssim \eps_0<d.
$$
Moreover, by (\ref{E4-9R1}) and (\ref{E4-10R2}), we derive
$$
\|E^{(1)}\|_{\Ccal_1^{k_1}}\lesssim\|R_1(h^{(1)})\|_{\Ccal_1^{k_1}}\lesssim N_1^4\|h^{(1)}\|^2_{\Ccal_1^{k_1}}\lesssim \eps_0 N_1^4<d^2,
$$
and 
$$
\|w^{(1)}\|_{\Ccal_1^{k_1}}\lesssim\|w^{(0)}\|_{\Ccal_1^{k_1}}+\|h^{(1)}\|_{\Ccal_1^{k_1}}\lesssim 2\eps_0<\eps.
$$
which means that $w^{(1)}\in\Bcal_{\eps,k_1}$.

Assume that the case of $m-1$ holds, i.e.
\bel{E4-13}
\aligned
&\|h^{(m-1)}\|_{\Ccal_1^{k_{m-1}}}<d^{2^{m-1}},\\
&\|E^{(m-1)}\|_{\Ccal_1^{k_{m-1}}}<d^{2^{m}},\\
&w^{(m-1)}\in\Bcal_{\eps,k_{m-1}},
\endaligned
\ee
then we prove the case of $m$ holds. Using (\ref{E4-10R1}) and (\ref{E4-13}), it holds
\bel{E4-14R1}
\|h^{(m)}\|_{\Ccal_1^{k_m}}\lesssim \|E^{(m-1)}\|_{\Ccal_1^{k_{m}}}< \|E^{(m-1)}\|_{\Ccal_1^{k_{m-1}}}<d^{2^{m}},
\ee
which combining with (\ref{E4-9R1}), (\ref{E4-10R2}) and (\ref{EX1-1}), it holds
\bel{E4-14}
\aligned
\|E^{(m)}\|_{\Ccal_1^{k_{m}}}&=\|R(h^{(m)})\|_{\Ccal_1^{k_{m}}}\\
&\lesssim N_{m-1}^4\|E^{(m-1)}\|^2_{\Ccal_1^{k_{m-1}}}\\
&\lesssim N_0^{4(m-1)+8(m-2)}\|E^{(m-2)}\|^{2^2}_{\Ccal_1^{k_{m-2}}}\\
&\lesssim \ldots,\\
&\lesssim (N_0^8\|E_0\|_{\Ccal_1^{k_0}})^{2^m}.
\endaligned
\ee

So by (\ref{E4-5}),  there is a sufficient small positive constant $\eps_0$ such that
$$
0<N_0^8\|E_0\|_{\Ccal_1^{k_0}}<N_0^8\eps_0<d^2,
$$
which combining with (\ref{E4-14}) gives that 
$$
\|E^{(m)}\|_{\Ccal_1^{k_{m}}}<d^{2^{m+1}}.
$$
On the other hand, by (\ref{E4-14R1}), it holds
$$
\|w^{(m)}\|_{\Ccal_1^{k_{m}}}\lesssim \|w^{(m-1)}\|_{\Ccal_1^{k_{m-1}}}+\|h^{(m)}\|_{\Ccal_1^{k_{m}}}\lesssim \sum_{i=1}^md^{2^i}+\eps_0<\eps.
$$
This means that $w^{(m)}\in\Bcal_{\eps,k_m}$. Hence we conclude that (\ref{E4-12}) holds.

Furthermore, it follows from (\ref{E4-12}) that the error term goes to $0$ as $m\rightarrow\infty$, i.e.
$$
\lim_{m\rightarrow\infty}E^{(m)}=0.
$$ 

Therefore, equation (\ref{YAE4-1}) with the initial data (\ref{YAE4-1RRR1}) and boundary condition (\ref{YAE4-1RRR2}) admits a solution 
$$
w^{(\infty)}=w^{(0)}+\sum_{m=1}^{\infty}h^{(m)}\in\Ccal_1^{2}.
$$
\end{proof}

Let $w_0,w_1\in\HH^l$ with $l>2$. We supplement equation (\ref{YAE4-1}) with small initial data 
\bel{Re1-1}
w(0,\rho)=\eps w_0(\rho),\quad w_{\tau}(0,\rho)=\eps w_1(\rho),
\ee
where 
$$
\aligned
&w_0(\rho)|_{\rho\in\del\Omega}=w_1(\rho)|_{\rho\in\del\Omega}=0,\\
&w'_0(\rho)|_{\rho\in\del\Omega}=w'_1(\rho)|_{\rho\in\del\Omega}=0.
\endaligned
$$

Then we have the following result.

\begin{proposition}
Equation (\ref{YAE4-1}) with the initial data (\ref{Re1-1})
and boundary condition (\ref{YAE4-1RRR2}) admits a global solution 
$$
w^{(\infty)}(\tau,\rho)=w^{(0)}(\tau,\rho)+\sum_{m=1}^{\infty}h^{(m)}(\tau,\rho)+\eps w_0(\rho)-\eps (e^{-\tau}-1)w_1(\rho),
$$
where $\sum_{m=1}^{\infty}h^{(m)}(\tau,\rho)\in\Ccal_1^{2}$.

Moreover, it holds
$$
\|w^{(\infty)}(\tau,\rho)\|_{\Ccal_1^2}\lesssim\eps.
$$
\end{proposition}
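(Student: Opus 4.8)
The plan is to reduce Proposition 4.5 to Proposition 4.4 by a change of unknown that absorbs the nonzero initial data. First I would set $\bar w(\tau,\rho):=w(\tau,\rho)-\eps w_0(\rho)+\eps(e^{-\tau}-1)w_1(\rho)$, so that $\bar w(0,\rho)=0$ and $\bar w_\tau(0,\rho)=0$ by the defining properties of the lift $\eps w_0(\rho)-\eps(e^{-\tau}-1)w_1(\rho)$ (note $\partial_\tau[-\eps(e^{-\tau}-1)w_1]|_{\tau=0}=\eps w_1$). Substituting $w=\bar w+\eps w_0-\eps(e^{-\tau}-1)w_1$ into the non-autonomous nonlinear equation (\ref{YAE4-1}), the linear part $\Lcal^{(0)}$ applied to the explicit correction produces a forcing term $g(\tau,\rho)$, and the nonlinearity $f(\rho,w)$ becomes $f(\rho,\bar w+\eps w_0-\eps(e^{-\tau}-1)w_1)$, which I would expand as $f(\rho,\bar w)$ plus terms that are either linear in $\bar w$ with coefficients of size $\Ocal(\eps)$ or are $\Ocal(\eps)$ forcing terms (using $w_0,w_1\in\HH^l$ and the boundary conditions, together with $e^{-\tau}-1$ being bounded for $\tau>0$). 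Thus $\bar w$ solves an equation of the same form as (\ref{YAE4-1}) with zero initial data, zero boundary data (which I must check: $w_0,w_1$ and their $\rho$-derivatives vanish on $\del\Omega$, so the correction does too), and an additional small external force $G(\tau,\rho)\in\Ccal_1^l$ with $\|G\|_{\Ccal_1^l}\lesssim\eps$.

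Next I would observe that the linearized operator $\Lcal[w^{(m-1)}]$ is exactly the operator treated in Proposition 3.3, and that Proposition 3.3 already handles the inhomogeneous linear problem with external force $f(\tau,\rho)\in\Ccal^2((0,\infty);\HH^l(\Omega))$, giving the decay estimate (\ref{YAE3-16R1}) and the bound (\ref{YAE3-17}). Hence the Nash--Moser iteration of Section 4.1 goes through verbatim for the equation satisfied by $\bar w$: one sets $\bar w^{(0)}$ to be the same initial approximation $v^{(0)}=(\kappa-1)\phi(\rho)$ lift-adjusted so that (\ref{E4-5}) holds with $\eps_0$ now also controlling $\|G\|_{\Ccal_1^l}$, defines the approximants $\bar w^{(m)}=\bar w^{(0)}+\sum_{i=1}^m h^{(i)}$ via $\Jcal(\bar w^{(m-1)})+\Lcal[\bar w^{(m-1)}](h^{(m)})=0$, and invokes Lemma 4.2 (estimate on $R(h^{(m)})$) and Lemma 4.3 (solvability of each linear step) exactly as before. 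The presence of the fixed smooth forcing $G$ only modifies $E^{(0)}$ by an $\Ocal(\eps_0)$ amount, which is harmless in the inductive scheme (\ref{E4-12}). Running the induction of Proposition 4.4 then yields $\bar w^{(\infty)}=\bar w^{(0)}+\sum_{m=1}^\infty h^{(m)}\in\Ccal_1^2$ with $\|\bar w^{(\infty)}\|_{\Ccal_1^2}\lesssim\eps$.

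Finally I would unwind the substitution: $w^{(\infty)}(\tau,\rho)=\bar w^{(\infty)}(\tau,\rho)+\eps w_0(\rho)-\eps(e^{-\tau}-1)w_1(\rho)=w^{(0)}(\tau,\rho)+\sum_{m=1}^\infty h^{(m)}(\tau,\rho)+\eps w_0(\rho)-\eps(e^{-\tau}-1)w_1(\rho)$, which is precisely the claimed form, and the bound $\|w^{(\infty)}\|_{\Ccal_1^2}\lesssim\eps$ follows by the triangle inequality together with $\|w_0\|_{\HH^l}+\|w_1\|_{\HH^l}\lesssim 1$ and the elementary bound $\sup_{\tau>0}|e^{-\tau}-1|\le 1$ (for the $\tau$-derivative, $\partial_\tau[-\eps(e^{-\tau}-1)w_1]=\eps e^{-\tau}w_1$ is likewise $\Ocal(\eps)$ in $\HH^{l-1}$). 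The main obstacle I anticipate is verifying that after the change of unknown the nonlinear remainder still has the structure required by Lemma 4.2, i.e. that all the cross terms between $\bar w$ and the explicit $\Ocal(\eps)$ correction are genuinely either $\Ocal(\eps)$-small linear perturbations of the coefficients $a_j(\cdot)$ in (\ref{YAE3-1RRR1}) or $\Ocal(\eps)$ forcing, so that the quadratic estimate $\|R(h^{(m)})\|_{\Ccal_1^l}\lesssim N_m^4\|h^{(m)}\|_{\Ccal_1^l}^2$ and the decay estimates of Lemma 3.8--3.9 are unaffected; this is a bookkeeping check on the terms of $f(\rho,\cdot)$ but it is where the argument could break if $w_0,w_1$ were not assumed small and smooth with vanishing boundary traces.
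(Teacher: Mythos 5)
Your proposal matches the paper's proof: the paper introduces exactly the same auxiliary function $\overline{w}(\tau,\rho)=w(\tau,\rho)-\eps w_0(\rho)+\eps (e^{-\tau}-1)w_1(\rho)$ to reduce to zero initial data, observes that the new terms only perturb the equation by $\Ocal(\eps)$ contributions that are harmless for the Nash--Moser scheme, and cites Proposition 4.1 to conclude. Your write-up adds useful bookkeeping (boundary trace checks, the role of Proposition 3.3 with external force, and the cross-term concern) that the paper leaves implicit, but the decomposition and the route are identical.
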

\begin{proof}

We introduce an auxiliary function
$$
\overline{w}(\tau,\rho)=w(\tau,\rho)-\eps w_0(\rho)+\eps (e^{-\tau}-1)w_1(\rho),
$$
then the small initial data (\ref{Re1-1}) is reduced into
$$
\overline{w}(0,\rho)=0,\quad \overline{w}_{\tau}(0,\rho)=0,
$$
and equation (\ref{YAE4-1}) is transformed into an equation of $\overline{w}(\tau,\rho)$. This new equation is only added by some more terms on $\eps w_0$ and $\eps (e^{-\tau}-1)w_1(\rho)$ than equation (\ref{YAE4-1}).
The main structure of the linearized equation is same with equation (\ref{YAE4-1}).
Since the parameter $\eps\ll1$ and the coefficient $e^{-\tau}$, those terms on $\eps w_0$ and $\eps (e^{-\tau}-1)w_1(\rho)$ do not effect the whole Nash-Moser iteration scheme. Hence using the same proof of process in Proposition 4.1, we can obtain this result.

\end{proof}

\subsection{Proof of Theorem 1.1.}

Let $T$ be a positive constant and 
$$
u_T^{\pm}(t,r)=\pm(T-t)\sqrt{1-(\frac{r}{T-t})^2},\quad (t,r)\in(0,T)\times(0,\sigma(T-t)].
$$

By Proposition 4.2, we have constructed a solution of the radially symmetric membranes equation (\ref{E1-1}) with the initial data (\ref{E1-2}) as follows
\bel{YAE4-19}
\aligned
u(t,r)&=u_T^{\pm}(t,r)+(1-\kappa)u_T^{\pm}(t,r)+w^{\infty}(t,r),
\endaligned
\ee
where $w^{\infty}(t,r)\in\HH^2(\Omega_{T-t})$ and
$$
\aligned
w^{\infty}(t,r)&:=(T-t)\Big(w^{(0)}(\log{T\over T-t},{r\over T-t})+\sum_{m=1}^{\infty}h^{(m)}(\log{T\over T-t},{r\over T-t})\\
&\quad\quad\quad+\eps w_0({r\over T-t})-\eps ({T-t\over T}-1)w_1({r\over T-t})\Big).
\endaligned
$$
Thus it follows from (\ref{YAE4-19}) that
\bel{YAE4-20}
u(t,r)-u_T^{\pm}(t,r)=(1-\kappa)u_T^{\pm}(t,r)+w^{\infty}(t,r).
\ee
So for a sufficient small positive constant $\eps (<\sigma)$ depending on $\sigma$, by Proposition 4.2 and (\ref{E4-5}), we can choose two positive parameters $\kappa$ and $T$ satisfying
$$
\aligned
&T^*-\delta<T<T^*+\delta,\quad for~0<\delta\ll1,\\
&1-(T\sigma)^{-{1\over2}}\Big(1-({\sigma T\over T^*})^2\Big)^{{1\over2}}\Big[1+\sigma+T\Big(1-({\sigma T\over T^*})^2\Big)\Big]^{-1}\eps<\kappa<1
\endaligned
$$ 
such that
$$
\aligned
&\|u_0(x)-u_{T^*}^{\pm}(0,r)\|_{\HH^{1}(\Omega_T)}+\|u_1(x)-\del_tu_{T^*}^{\pm}(0,r)\|_{\LL^{2}(\Omega_T)}\\
&\lesssim(1-\kappa)\Big(\|u_{T^*}^{\pm}(0,r)\|_{\HH^1(\Omega_T)}+\|\del_tu_{T^*}^{\pm}(0,r)\|_{\LL^2(\Omega_T)}\Big)+\|w^{\infty}(t,r)\|_{\HH^1(\Omega_T)}+\|w_t^{\infty}(t,r)\|_{\LL^2(\Omega_T)}\\
&\leq(1-\kappa)(T\sigma)^{{1\over2}}(1-({\sigma T\over T^*})^2)^{-{1\over2}}\Big[1+\sigma+T\Big(1-({\sigma T\over T^*})^2\Big)\Big]+2\eps\\
&\lesssim\eps,
\endaligned
$$
then by (\ref{YAE4-20}), we obtain
$$
\aligned
\| u(t,r)-u_T^{\pm}(t,r)\|_{\HH^1(\Omega_{T-t})}
&\leq(1-\kappa)\|u_{T^*}^{\pm}(t,r)\|_{\HH^1(\Omega_{T-t})}
+\|w^{\infty}(t,r)\|_{\HH^1(\Omega_{T-t})}\\
&\lesssim \eps(T-t),
\endaligned
$$
where we impose the boundary condition $$w^{\infty}(t,r)|_{r\in\del\Omega_{T-t}}=w_r^{\infty}(t,r)|_{r\in\del\Omega_{T-t}}=0,$$ and $$\Omega_{T-t}:=\{r:r\in(0,\sigma(T-t)]\}.$$

Therefore, two lightlike self-similar solutions $u_T^{\pm}(t,r)$
are nonlinearly stable in $\{(t,r): (t,r)\in (0,T)\times(0,\delta(T-t)]\}$.
\\

%================================================================================

\section{Appendix}\setcounter{equation}{0}

In the appendix, we give the details on the proof of Lemma 3.5. Firstly, we recall a result of the existence of difference equation, which first established by Birkhoff and Trjitzinsk.

\begin{proposition}(Birkhoff and Trjitzinsk \cite{Bir3})
The $k$th-order linear difference equation
\begin{equation*}
a_0(n)u_n+a_1(n)u_{n+1}+\ldots+a_k(n)u_{n+k}=0,~~~a_0\not\equiv0,~~a_k\not\equiv0,
\end{equation*}
with polynomial coefficients $a_i$ has precisely $k$ linearly independent formal solutions of the general form
\begin{equation}\label{BT}
u_n=e^{Q(n)}n^r\sum_{i=0}^{\infty}n^{\frac{-i}{p}}\sum_{j=0}^mC_{i,j}\ln^{j}n,
\end{equation}
where
\begin{equation*}
Q(n)=\mu n\ln n+\sum_{s=0}^p\nu_s n^{\frac{s}{p}},
\end{equation*}
and $C_{i,j}$ and $\nu_s$ are coefficients, $r,p\in\mathbb{N}$, $\mu p\in\mathbb{Z}$ and $m\in\mathbb{N}\cap\{0\}$.
\end{proposition}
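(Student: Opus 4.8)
The statement is the formal normal-form theorem of Birkhoff and Trjitzinsk, and since only the existence of $k$ linearly independent \emph{formal} solutions is claimed (no convergence), the plan is to carry out the discrete analogue of the Levelt--Turrittin reduction for irregular singular points of linear ODEs, working throughout with the shift operator $E\colon u_n\mapsto u_{n+1}$. Write the equation as $Lu:=\sum_{i=0}^{k}a_i(n)E^{i}u=0$; the hypotheses $a_0\not\equiv0$, $a_k\not\equiv0$ guarantee that $L$ has order exactly $k$, so that the Newton polygon of $L$ has endpoints on the lines $i=0$ and $i=k$. \textbf{Step 1 (Newton polygon).} Plot the points $(i,\deg a_i)$ for $i=0,\dots,k$ and take the lower convex hull. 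Each edge has a slope $\lambda$, and each slope contributes one block of the eventual normal form: a slope $\lambda=p'/q'$ in lowest terms forces a twist $Q(n)=\mu n\ln n+\sum_{s=0}^{p}\nu_s n^{s/p}$ whose dominant term is governed by $\lambda$, and the least common multiple of the denominators $q'$ that occur is the ramification index $p$.

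\textbf{Step 2 (splitting and reduction).} Process the steepest edge first by a shearing substitution $u_n=g(n)v_n$, where $g$ is a product of $\Gamma$-type factors (contributing the $n^{\mu n}=e^{\mu n\ln n}$ growth), geometric factors $c^{n}$, and factors $\exp(\nu_s n^{s/p})$, chosen so that $g^{-1}Lg$ has trivial leading behaviour along that edge. A Hensel-type splitting lemma for difference operators then yields a factorization $g^{-1}Lg=L_1L_2$ whose second factor carries precisely the remaining, less steep edges. Iterating this procedure --- and performing the ramification $n\mapsto n^{1/p}$ once a slope has non-integral denominator --- terminates after finitely many steps, because the Newton polygon strictly simplifies at each stage; one is left on each block with an operator whose polygon is a single horizontal edge through one vertex, i.e.\ the regular-singular case.

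\textbf{Step 3 (regular-singular block, logarithms, and counting).} On such a block apply the discrete Frobenius method: seek $v_n=n^{r}\sum_{i\ge0}c_i n^{-i/p}$, substitute into $Lv=0$, and read off the indicial polynomial, whose roots give the admissible exponents $r$. When two exponents differ by an integer multiple of $1/p$ (a resonance), the recursion for the $c_i$ degenerates, and one must enlarge the ansatz to $v_n=n^{r}\sum_{i\ge0}n^{-i/p}\sum_{j=0}^{m}C_{i,j}\ln^{j}n$ with $m\le k-1$ bounded by the largest Jordan block of the associated formal-monodromy matrix; this is exactly the inner sum in (\ref{BT}). Tracking dimensions through all the splittings --- a block of size $d$ contributes $d$ independent solutions and the block sizes add up to $k$ --- produces the claimed $k$ linearly independent formal solutions.

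\textbf{The main obstacle} will be the splitting lemma in Step 2: showing that after the shearing substitution $g^{-1}Lg$ genuinely factors over the ring of ramified formal series, with the coefficient field kept under control at each stage. This is delicate because $E$ does not commute with multiplication and shifts the grading by powers of $n$ only up to lower-order corrections, so the Hensel iteration must be organized with respect to a valuation adapted to the current edge rather than the naive degree. A secondary technical point is verifying in Step 3 that the logarithmic ansatz always closes the coefficient recursion and never requires powers of $\ln n$ beyond order $k-1$.
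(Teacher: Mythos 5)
The paper does not prove this proposition: it is quoted verbatim from Birkhoff and Trjitzinsky \cite{Bir3} and used as a black box (the appendix then performs an independent, elementary diagonalization of the companion system (\ref{RRRRE3-7}) to extract the asymptotics actually needed). So there is no proof in the paper to compare against; the question is only whether your sketch is mathematically sound.

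Your strategy --- Newton polygon on $(i,\deg a_i)$, shearing by $\Gamma$-type and $\exp(\nu_s n^{s/p})$ factors, a Hensel-type splitting to peel off one edge at a time, ramification to handle fractional slopes, and a discrete Frobenius method with logarithms on the residual regular-singular block --- is indeed the modern route (the difference-operator analogue of Levelt--Turrittin, close to how Birkhoff--Trjitzinsky and later Praagman and van~der~Put--Singer organize the argument). The $\Gamma$-factor correctly accounts for the $\mu n\ln n$ term absent from the ODE case, and tracking block dimensions to reach $k$ is the right accounting.

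However, as a \emph{proof} the proposal has a genuine gap, and you identify it yourself: Step~2's splitting lemma is asserted, not proved. This is precisely where the difference case is harder than the differential one. The shift $E$ does not commute with multiplication by functions of $n$, and after a shearing $u_n=g(n)v_n$ the conjugate $g(n)^{-1}\,L\,g(n)$ involves ratios $g(n+i)/g(n)$ which themselves have ramified asymptotic expansions whose subleading terms feed back into every coefficient. Making the valuation ``adapted to the current edge'' precise, proving that the Newton polygon strictly decreases under it, and showing the factorization is over a well-defined ring of formal ramified series, is the mathematical content of the theorem; without it the argument is a plan, not a proof. The secondary point you flag --- that the logarithmic ansatz closes and $m\le k-1$ --- also needs an argument via the nilpotent part of the formal monodromy, but it is comparatively routine once the splitting is in hand. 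In short: correct outline, but the key lemma is only announced. Since the paper itself merely cites \cite{Bir3}, matching the paper would require no proof at all; supplying a complete one would require filling the splitting-lemma gap.
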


It is hard to get an exact expansion of solution to (\ref{E3-6}) by (\ref{BT}). So we have to use other method to analyze the asmptotic behavior of solutions to (\ref{E3-6}).

Let
\begin{eqnarray*}
&&z_{n}^{(1)}=a_n,\\
&&z_{n}^{(2)}=a_{n+1}=z^{(1)}_{n+1},\\
&&z_{n}^{(3)}=a_{n+2}=z^{(2)}_{n+1},\\
&&z_{n}^{(4)}=a_{n+3}=z^{(3)}_{n+1},
\end{eqnarray*}
then by (\ref{E3-6}), we have
\begin{equation*}
z_{n+1}^{(4)}=(2-p_1(n))z^{(3)}_{n}-(1+p_2(n))z^{(1)}_{n},
\end{equation*}
where $z_n^{(i)}$ $(i=1,2,3,4)$ depends on $\nu$ and $\kappa$.

Furthermore, let $z_n=(z^{(1)}_n,z^{(2)}_n,z^{(3)}_n,z^{(4)}_n)$, we have
\bel{RRRRE3-7}
\aligned
z_{n+1}&=\left(
\begin{array}{cccc}
0&1&0&0\\
0&0&1&0\\
0&0&0&1\\
-1-p_2(n)&0&2-p_1(n)&0
\end{array}
\right)z_n\\
&=(\mathcal{D}+\mathcal{T}(n))z_n,
\endaligned
\ee
where
\begin{eqnarray*}
\mathcal{D}=\left(
\begin{array}{cccc}
0&1&0&0\\
0&0&1&0\\
0&0&0&1\\
-1&0&2&0
\end{array}
\right),~~
\mathcal{T}(n)=\left(
\begin{array}{cccc}
0&0&0&0\\
0&0&0&0\\
0&0&0&0\\
-p_2(n)&0&-p_1(n)&0
\end{array}
\right).
\end{eqnarray*}

Since the matrix $\mathcal{D}$ has eigenvalues $\lambda_1=\lambda_2=1$ and $\lambda_3=\lambda_4=-1$ (double), we should diagonalize the matrix $\mathcal{D}$.
Direct computation shows that $\lambda_1$ has an eigenvector $\xi_1=(1,1,1,1)$, and $\lambda_3$ has an eigenvector $\xi_3=(1,-1,1,-1)$. Note that $1$ and $-1$ are double eigenvalues of $\mathcal{D}$.
We have to set $(\lambda_1E-\mathcal{D})\xi_2=\xi_1$, i.e.
\begin{eqnarray*}
\left(
\begin{array}{cccc}
1&-1&0&0\\
0&1&-1&0\\
0&0&1&-1\\
1&0&-2&1
\end{array}
\right)
\left(
\begin{array}{c}
x_1\\x_2\\x_3\\x_4
\end{array}
\right)=
\left(
\begin{array}{c}
1\\1\\1\\1
\end{array}
\right),
\end{eqnarray*}
solving it, we have a new eigenvector $\xi_2=(0,1,2,3)$. Here $E$ is the identity matrix.

Similarly, set $(\lambda_3E-\mathcal{D})\xi_4=\xi_3$, i.e.
\begin{eqnarray*}
\left(
\begin{array}{cccc}
-1&-1&0&0\\
0&-1&-1&0\\
0&0&-1&-1\\
1&0&-2&-1
\end{array}
\right)
\left(
\begin{array}{c}
x_1\\x_2\\x_3\\x_4
\end{array}
\right)=
\left(
\begin{array}{c}
1\\-1\\1\\-1
\end{array}
\right),
\end{eqnarray*}
we get the last eigenvector $\xi_4=(0,1,-2,3)$.

Let
\begin{eqnarray*}
\mathcal{P}=\left(
\begin{array}{cccc}
1&0&1&0\\
1&1&-1&1\\
1&2&1&-2\\
1&3&-1&3
\end{array}
\right),~~
\mathcal{P}^{-1}=\left(
\begin{array}{cccc}
\frac{1}{2}&\frac{3}{4}&0&-\frac{1}{4}\\
-\frac{1}{4}&-\frac{1}{4}&\frac{1}{4}&\frac{1}{4}\\
\frac{1}{2}&-\frac{3}{4}&0&\frac{1}{4}\\
\frac{1}{4}&-\frac{1}{4}&-\frac{1}{4}&\frac{1}{4}
\end{array}
\right),
\end{eqnarray*}
then the matrix $\mathcal{D}$ is transformed into Jordan matrix
\begin{eqnarray}\label{RRRRE3-8}
\mathcal{P}^{-1}\mathcal{D}\mathcal{P}=J:=\left(
\begin{array}{cccc}
1&1&0&0\\
0&1&0&0\\
0&0&-1&1\\
0&0&0&-1
\end{array}
\right).
\end{eqnarray}

So let $z_n=\mathcal{P}y_n^{(1)}$, by (\ref{RRRRE3-7}), we get
\begin{eqnarray}\label{RRRRE3-12}
y_{n+1}^{(1)}=(J+\mathcal{P}^{-1}\mathcal{T}(n)\mathcal{P})y_n^{(1)},
\end{eqnarray}
where
\begin{eqnarray*}
\mathcal{P}^{-1}\mathcal{T}(n)\mathcal{P}=J:=\left(
\begin{array}{cccc}
\frac{p_1(n)+p_2(n)}{4}&\frac{p_1(n)}{2}&\frac{p_1(n)+p_2(n)}{4}&-\frac{p_1(n)}{2}\\
-\frac{p_1(n)+p_2(n)}{4}&-\frac{p_1(n)}{2}&-\frac{p_1(n)+p_2(n)}{4}&\frac{p_1(n)}{2}\\
-\frac{p_1(n)+p_2(n)}{4}&-\frac{p_1(n)}{2}&-\frac{p_1(n)+p_2(n)}{4}&\frac{p_1(n)}{2}\\
-\frac{p_1(n)+p_2(n)}{4}&-\frac{p_1(n)}{2}&-\frac{p_1(n)+p_2(n)}{4}&\frac{p_1(n)}{2}
\end{array}
\right).
\end{eqnarray*}
Now our task is to transform the Jordan matrix $J$ into a diagonal matrix with four different eigenvalues.
\begin{lemma}
There are two inverse matrices $\mathcal{M}_1(n)$ and $\mathcal{M}_2(n)$ depending on $n$ such that
\begin{equation}\label{RRRRE3-11}
\mathcal{M}_1(n)J\mathcal{M}_2(n)=\left(
\begin{array}{cccc}
1&0&0&0\\
0&1+\frac{1}{n}&0&0\\
0&0&-1&0\\
0&0&0&-(1+\frac{1}{n})
\end{array}
\right),
\end{equation}
where $n=1,2,3,\ldots$ and $J$ is the Jordan matrix in (\ref{RRRRE3-8}).
\end{lemma}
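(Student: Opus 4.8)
The plan is to exploit the block structure of $J$. In the standard $2+2$ decomposition of $\mathbb{C}^4$ the Jordan matrix $J$ of (\ref{RRRRE3-8}) is block diagonal, $J=J_{+}\oplus J_{-}$ with $J_{+}=(\begin{smallmatrix}1&1\\0&1\end{smallmatrix})$ and $J_{-}=(\begin{smallmatrix}-1&1\\0&-1\end{smallmatrix})$, and the target matrix on the right-hand side of (\ref{RRRRE3-11}) is block diagonal in exactly the same decomposition, namely $\mathrm{diag}(1,\tfrac{n+1}{n})\oplus\mathrm{diag}(-1,-\tfrac{n+1}{n})$. So it suffices to treat each $2\times2$ Jordan block separately: for the block $(\begin{smallmatrix}\lambda&1\\0&\lambda\end{smallmatrix})$ with $\lambda\in\{1,-1\}$ one produces invertible $2\times2$ matrices $N^{(\lambda)}_1(n),N^{(\lambda)}_2(n)$ with $N^{(\lambda)}_1(n)(\begin{smallmatrix}\lambda&1\\0&\lambda\end{smallmatrix})N^{(\lambda)}_2(n)=\mathrm{diag}\big(\lambda,\lambda(1+\tfrac1n)\big)$, and then $\mathcal{M}_1(n),\mathcal{M}_2(n)$ are the $4\times4$ matrices assembled block-diagonally from the $N^{(\lambda)}_i(n)$. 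Existence of such $2\times2$ factors is automatic, since any two invertible square matrices over a field differ by left and right multiplication by invertible matrices (indeed $XAY=B$ holds already with $X=BA^{-1}$, $Y=E$); the content is only to write them down explicitly with good control in $n$.

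For the explicit choice I would take the left factor to be the identity and put all the weight on the right factor. For the $+1$ block one checks directly that $(\begin{smallmatrix}1&1\\0&1\end{smallmatrix})(\begin{smallmatrix}1&-(1+1/n)\\0&1+1/n\end{smallmatrix})=(\begin{smallmatrix}1&0\\0&1+1/n\end{smallmatrix})$, the right factor having determinant $1+\tfrac1n\neq0$ for every $n\geq1$; for the $-1$ block, $(\begin{smallmatrix}-1&1\\0&-1\end{smallmatrix})(\begin{smallmatrix}1&1+1/n\\0&1+1/n\end{smallmatrix})=(\begin{smallmatrix}-1&0\\0&-(1+1/n)\end{smallmatrix})$, again with an invertible right factor. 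Thus one may take $\mathcal{M}_1(n)=E$ and $\mathcal{M}_2(n)$ to be the block-diagonal matrix built from $(\begin{smallmatrix}1&-(1+1/n)\\0&1+1/n\end{smallmatrix})$ and $(\begin{smallmatrix}1&1+1/n\\0&1+1/n\end{smallmatrix})$; performing the $4\times4$ product then verifies (\ref{RRRRE3-11}) directly. If one wants both $\mathcal{M}_1(n)$ and $\mathcal{M}_2(n)$ to be genuinely $n$-dependent (for instance so that each tends to a constant invertible limit as $n\to\infty$), one instead distributes the scalar $1+\tfrac1n$ between the two factors symmetrically; the verification is the same short computation.

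The reason this step is worth isolating is that it repairs the spectral degeneracy which obstructed the Poincar\'{e}-type argument: after applying $\mathcal{M}_1(n)(\,\cdot\,)\mathcal{M}_2(n)$, the leading part of the recursion (\ref{RRRRE3-12}) becomes the diagonal matrix with the four pairwise distinct entries $1,\ 1+\tfrac1n,\ -1,\ -(1+\tfrac1n)$ in place of the defective $J$. I expect the real difficulty to lie not in this lemma but in what it is used for: before substituting $y^{(1)}_n=\mathcal{M}_2(n)y^{(2)}_n$ into (\ref{RRRRE3-12}) one must check that $\mathcal{M}_1(n)$, $\mathcal{M}_2(n)$ and their inverses are uniformly bounded in $n$ --- equivalently that $\det\mathcal{M}_2(n)$ stays bounded away from $0$ and from $\infty$ --- so that the change of variables creates neither spurious growth nor spurious decay; with the explicit choice above this is immediate, since $\det\mathcal{M}_2(n)=(1+\tfrac1n)^2\in[1,4]$ for all $n\geq1$. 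A final point to record is that the size $\tfrac1n$ of the eigenvalue splitting is chosen to match the order of the perturbation $\mathcal{P}^{-1}\mathcal{T}(n)\mathcal{P}$, which is $O(1/n)$ because $p_1(n),p_2(n)\to0$ at that rate by (\ref{E3-15})--(\ref{E3-16}); this is precisely what will let the split eigenvalues dominate the off-diagonal remainder in the subsequent asymptotic analysis.
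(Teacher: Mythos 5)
Your construction does verify the literal statement of the lemma, and you are right that the statement as printed is trivial: for any invertible $J$ and any invertible target $D_n$ one can take $\mathcal M_1(n)=D_nJ^{-1}$ and $\mathcal M_2(n)=E$. However, precisely because the statement is so weak, the content of the paper's proof lives in a constraint that the lemma forgets to record and that your choice fails to satisfy: the two factors must obey
\[
\mathcal M_1(n)=\bigl[\mathcal M_2(n+1)\bigr]^{-1}.
\]
This is forced by how the lemma is used. In the passage from (\ref{RRRRE3-12}) to (\ref{RRRRE3-14}) the paper changes variables $y_n^{(1)}=\mathcal P_1\mathcal P_2\mathcal P_3(n)\mathcal P_4(n)\,y_n^{(5)}=:\mathcal M_2(n)\,y_n^{(5)}$, and in a recursion $y_{n+1}=(J+\mathcal P^{-1}\mathcal T(n)\mathcal P)\,y_n$ the substitution $y_n=\mathcal M_2(n)y_n^{(5)}$, $y_{n+1}=\mathcal M_2(n+1)y_{n+1}^{(5)}$ produces
\[
y^{(5)}_{n+1}=\mathcal M_2(n+1)^{-1}\,(J+\mathcal P^{-1}\mathcal T(n)\mathcal P)\,\mathcal M_2(n)\,y^{(5)}_n,
\]
so the left multiplier is \emph{not} free to be chosen: it is $\mathcal M_2(n+1)^{-1}$. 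The paper's factorization
\[
\mathcal M_1(n)=\mathcal P_4^{-1}(n+1)\mathcal P_3^{-1}(n+1)\mathcal P_2^{-1}\mathcal P_1^{-1},\qquad
\mathcal M_2(n)=\mathcal P_1\mathcal P_2\mathcal P_3(n)\mathcal P_4(n)
\]
satisfies $\mathcal M_1(n)=\mathcal M_2(n+1)^{-1}$ by construction (note the index shift from $n$ to $n+1$ on the left), and the $2\times2$ computation $\mathcal P_3^{-1}(n+1)\bigl(\begin{smallmatrix}0&-1\\1&2\end{smallmatrix}\bigr)\mathcal P_3(n)=\mathrm{diag}(1,1+\tfrac1n)$ that appears in the paper is exactly this shifted identity. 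Your choice $\mathcal M_1(n)=E$ cannot be of the form $\mathcal M_2(n+1)^{-1}$ for any nonconstant $\mathcal M_2$, so it cannot serve as the left factor coming out of (\ref{RRRRE3-12}); attempting the substitution with your $\mathcal M_2(n)$ would instead put $\mathcal M_2(n+1)^{-1}\bigl(\begin{smallmatrix}1&1\\0&1\end{smallmatrix}\bigr)\mathcal M_2(n)$ on the right, which is not diagonal and does not have the split spectrum $\{1,1+\tfrac1n\}$. You do sense there is a compatibility issue with the substitution, but you locate it in uniform boundedness of $\mathcal M_i(n)$, whereas the binding constraint is the cocycle identity $\mathcal M_1(n)=\mathcal M_2(n+1)^{-1}$. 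If you want to repair your argument, keep your block philosophy but insist on a one-parameter family: find $Q(n)$ (block-diagonal in the $2+2$ splitting) with $Q(n+1)^{-1}J\,Q(n)=\mathrm{diag}(1,1+\tfrac1n,-1,-(1+\tfrac1n))$ and then set $\mathcal M_2(n)=Q(n)$, $\mathcal M_1(n)=Q(n+1)^{-1}$; the paper's $\mathcal P_3,\mathcal P_4$ are one explicit such $Q$.
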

\begin{proof}
This proof is based on observation. Let
\begin{eqnarray*}
\mathcal{P}_1=\left(
\begin{array}{cccc}
1&2&0&0\\
1&1&0&0\\
0&0&1&0\\
0&0&0&1
\end{array}
\right),~~
\mathcal{P}_1^{-1}=\left(
\begin{array}{cccc}
-1&2&0&0\\
1&-1&0&0\\
0&0&1&0\\
0&0&0&1
\end{array}
\right),
\end{eqnarray*}
and
\begin{eqnarray*}
\mathcal{P}_2=\left(
\begin{array}{cccc}
1&0&0&0\\
0&1&0&0\\
0&0&-1&-2\\
0&0&1&1
\end{array}
\right),~~
\mathcal{P}_2^{-1}=\left(
\begin{array}{cccc}
1&0&0&0\\
0&1&0&0\\
0&0&1&2\\
0&0&-1&-1
\end{array}
\right),
\end{eqnarray*}
then we derive
\begin{eqnarray}\label{RRRRE3-9}
\mathcal{P}_2^{-1}\mathcal{P}_1^{-1}J\mathcal{P}_1\mathcal{P}_2=\left(
\begin{array}{cccc}
0&-1&0&0\\
1&2&0&0\\
0&0&0&1\\
0&0&-1&-2
\end{array}
\right).
\end{eqnarray}

To diagonalize above matrix with four different eigenvalues, we introdcue a matrix depending on $n$
\begin{eqnarray*}
\mathcal{P}_3(n)=\left(
\begin{array}{cccc}
1&1&0&0\\
-1&-(1+\frac{1}{n})&0&0\\
0&0&1&0\\
0&0&0&1
\end{array}
\right),
\end{eqnarray*}
then
\begin{eqnarray*}
\mathcal{P}_3^{-1}(n+1)=\left(
\begin{array}{cccc}
n+2&n+1&0&0\\
-(n+1)&-(n+1)&0&0\\
0&0&1&0\\
0&0&0&1
\end{array}
\right).
\end{eqnarray*}
By (\ref{RRRRE3-9}), direct computation shows that
\begin{eqnarray}\label{RRRRE3-10}
\mathcal{P}_3^{-1}(n+1)\mathcal{P}_2^{-1}\mathcal{P}_1^{-1}J\mathcal{P}_1\mathcal{P}_2\mathcal{P}_3(n)=\left(
\begin{array}{cccc}
1&0&0&0\\
0&1+\frac{1}{n}&0&0\\
0&0&0&1\\
0&0&-1&-2
\end{array}
\right).
\end{eqnarray}
Thus we introduce another matrix
\begin{eqnarray*}
\mathcal{P}_4(n)=\left(
\begin{array}{cccc}
1&0&0&0\\
0&1&0&0\\
0&0&1&1\\
0&0&-1&-(1+\frac{1}{n})
\end{array}
\right),
\end{eqnarray*}
then
\begin{eqnarray*}
\mathcal{P}_4^{-1}(n+1)=\left(
\begin{array}{cccc}
1&0&0&0\\
0&1&0&0\\
0&0&n+2&n+1\\
0&0&-(n+1)&-(n+1)
\end{array}
\right).
\end{eqnarray*}
which combining with (\ref{RRRRE3-10}) gives that
\begin{eqnarray*}
\mathcal{M}_1(n)J\mathcal{M}_2(n)=\left(
\begin{array}{cccc}
1&0&0&0\\
0&1+\frac{1}{n}&0&0\\
0&0&-1&0\\
0&0&0&-(1+\frac{1}{n})
\end{array}
\right),
\end{eqnarray*}
where
\begin{eqnarray*}
&&\mathcal{M}_1(n)=\mathcal{P}_4^{-1}(n+1)\mathcal{P}_3^{-1}(n+1)\mathcal{P}_2^{-1}\mathcal{P}_1^{-1},\\
&&\mathcal{M}_2(n)=\mathcal{P}_1\mathcal{P}_2\mathcal{P}_3(n)\mathcal{P}_4(n).
\end{eqnarray*}
\end{proof}

We now return to the system (\ref{RRRRE3-12}). Set
\begin{eqnarray*}
&&y_n^{(1)}=\mathcal{P}_1y_n^{(2)},\\
&&y_n^{(2)}=\mathcal{P}_2y_n^{(3)},
\end{eqnarray*}
we derive
\begin{eqnarray}\label{RRRRE3-13}
y_{n+1}^{(3)}=(\mathcal{P}_2^{-1}\mathcal{P}_1^{-1}J\mathcal{P}_1\mathcal{P}_2+\mathcal{P}_2^{-1}\mathcal{P}_1^{-1}\mathcal{P}^{-1}\mathcal{T}(n)\mathcal{P}\mathcal{P}_1\mathcal{P}_2)y_n^{(3)}.
\end{eqnarray}

Set
\begin{eqnarray*}
&&y_n^{(3)}=\mathcal{P}_3(n)y_n^{(4)},~~y_{n+1}^{(3)}=\mathcal{P}_3(n+1)y_{n+1}^{(4)}\\
&&y_n^{(4)}=\mathcal{P}_4y_n^{(5)},~~y_{n+1}^{(4)}=\mathcal{P}_4y_{n+1}^{(5)},
\end{eqnarray*}
and
\begin{eqnarray*}
&&\mathcal{M}_1(n)=\mathcal{P}_4^{-1}(n+1)\mathcal{P}_3^{-1}(n+1)\mathcal{P}_2^{-1}\mathcal{P}_1^{-1},\\
&&\mathcal{M}_2(n)=\mathcal{P}_1\mathcal{P}_2\mathcal{P}_3(n)\mathcal{P}_4(n).
\end{eqnarray*}

Taking advantage of the process of proof in Lemma 5.1, we derive from
(\ref{RRRRE3-13}) that
\begin{eqnarray}\label{RRRRE3-14}
y_{n+1}^{(5)}&=&(\mathcal{M}_1(n)J\mathcal{M}_2(n)+\mathcal{M}_1(n)\mathcal{P}^{-1}\mathcal{T}(n)\mathcal{P}\mathcal{M}_2(n))y_n^{(5)}\nonumber\\
&=&(\tilde{\mathcal{D}}+\tilde{\mathcal{T}}(n))y_n^{(5)},~~~~n\geq1,
\end{eqnarray}
where $\tilde{\mathcal{D}}:=\mathcal{M}_1(n)J\mathcal{M}_2(n)$ is a diagonal matrix defined in (\ref{RRRRE3-11}), $\tilde{\mathcal{T}}(n)$ is a off-diagonal matrix, which is
\begin{equation}\label{RRRRE3-20}
\begin{array}{lll}
&&\tilde{\mathcal{T}}(n)=\mathcal{M}_1(n)\mathcal{P}^{-1}\mathcal{T}(n)\mathcal{P}\mathcal{M}_2(n)\\
&=&\left(
\begin{array}{cccc}
\frac{(n+4)(2p_1+p_2)}{4}&\frac{p_1(n+\frac{16}{n}+8)+p_2(n+\frac{8}{n}+6)}{4}&\frac{p_1(5n+2)+2(n+1)p_2}{4}&\frac{p_1(5n^2-2n-16)+2(n^2-4)}{4n}\\
\frac{-(n+1)(5p_1+3p_2)}{4}&\frac{-p_1(n+1)(5+\frac{8}{n})-p_2(3+\frac{4}{n})}{4}&\frac{(n+1)(p_1+p_2)}{4}&\frac{p_1(n^2+5n+4)+p_2(n^2+3n+2)}{4n}\\
\frac{(n+4)(p_1+p_2)}{4}&\frac{(n+4)[(1+\frac{4}{n})p_1+(1+\frac{2}{n})p_2]}{4}&\frac{-(n+4)(p_1+p_2)}{4}&\frac{-(n+4)(5p_1+3p_2)}{4n}\\
\frac{-(n+1)(p_1+p_2)}{4}&\frac{-(n+1)[(1+\frac{4}{n})p_1+(1+\frac{2}{n})p_2]}{4}&\frac{(n+1)(p_1-3p_2)}{4}&\frac{(n+1)(5p_1-7p_2)}{4n}
\end{array}
\right),
\end{array}
\end{equation}
where $p_1:=p_1(n)$ and $p_2:=p_2(n)$ defined in (\ref{E3-15})-(\ref{E3-16}), respectively.

So it follows from (\ref{RRRRE3-14}) that
\begin{eqnarray*}
y^{(5)}_{n+1}=(\tilde{\mathcal{D}}(n)+\tilde{\mathcal{T}}(n))\cdot(\tilde{\mathcal{D}}(n-1)+\tilde{\mathcal{T}}(n-1))\cdot\ldots\cdot(\tilde{\mathcal{D}}(1)+\tilde{\mathcal{T}}(1))y_1^{(5)},
\end{eqnarray*}
which combining with (\ref{RRRRE3-20}) that $y^{(5)}_{n+1}$ has an unbounded solution depending on $n$. This is coincident with the result of Birkhoff-Trjitzinsk \cite{Bir3}.
Thus we complete the proof of Lemma 3.5.

%================================================================================

\textbf{Acknowledgments.} 
%The author is grateful to the anonymous referees who read the paper very carefully and made numerous suggestions for improving %the presentation.
The author expresses his sincere thanks to Prof. Gang. Tian, Prof. Zhifei. Zhang, Prof. Dexing. Kong and Prof. Baoping Liu for their many kind helps and suggestions,
Prof. R. Donninger for giving me some important suggestions, Prof. J. Hoppe for his pointing out two explicit solutions being lightlike, and his suggestion and informing me his interesting papers \cite{hop}. The author also expresses his sincere thanks to Dr. C.H. Wei for his suggestion on the relationship between the timelike extremal hypersurface equation and Chaplygin gas model \cite{WY}.
The author is supported by NSFC No 11771359, and the Fundamental Research Funds for the Central Universities (Grant No. 20720190070, No.201709000061).

%================================================================================

\end{document}